\documentclass[10pt]{article}
\usepackage[a4paper]{geometry}
\usepackage[utf8]{inputenc}
\usepackage[T1]{fontenc}
\usepackage{lmodern}
\usepackage[hidelinks]{hyperref}
\usepackage{url}
\usepackage{booktabs, bookmark}
\usepackage{amsfonts}
\usepackage{nicefrac}
\usepackage{microtype}
\usepackage{graphicx}
\usepackage{multirow}
\usepackage{amsmath,amsthm,amssymb}
\usepackage{mathrsfs}
\usepackage[title]{appendix}
\usepackage{xcolor}
\usepackage{textcomp}
\usepackage{manyfoot}
\usepackage{algorithm,algorithmic}
\usepackage{listings}
\usepackage{bm}
\usepackage{float,needspace}
\usepackage{empheq}
\usepackage{mathtools}
\usepackage{longtable}
\usepackage{enumitem}

\graphicspath{{./images/}}

\theoremstyle{plain}
\newtheorem{theorem}{Theorem}
\newtheorem{lemma}{Lemma}

\theoremstyle{definition}
\newtheorem{definition}{Definition}

\newtheorem{assumption}{Assumption}

\theoremstyle{remark}
\newtheorem{remark}{Remark}

\newcommand{\dist}{\mathrm{dist}}

\newcommand{\mcA}{\mathcal{A}}
\newcommand{\mcC}{\mathcal{C}}

\DeclareMathOperator{\argmin}{arg\,min}

\newcommand{\R}{\mathbb R}
\newcommand{\ip}[2]{\langle #1,#2\rangle}
\newcommand{\norm}[1]{\|#1\|}
\DeclareMathOperator{\dom}{dom}
\DeclareMathOperator{\prox}{prox}
\setlist[enumerate]{label=(\alph*),leftmargin=2em,itemsep=2pt,topsep=3pt}
\allowdisplaybreaks[2]

\title{Efficient Parameter-Free First-Order Methods
for Nonsmooth Composite Minimax Optimization}
\author{
Shaozhe Ke
\thanks{
Department of Industrial Engineering and Decision Analytics, Hong Kong University of Science and Technology, Hong Kong, China (email: {\tt shaozheke1220@gmail.com, symei@ust.hk, jiheng@ust.hk}).
}
\and
Sanyou Mei
\footnotemark[1]
\and
Jiheng Zhang
\footnotemark[1]}

\date{September 22, 2026}
\begin{document}
\maketitle\vspace{-2em}
\begin{abstract}
In this paper we propose first-order methods for a class of nonsmooth composite strongly convex--strongly concave and nonconvex--concave minimax optimization. We first develop an inexact proximal method and an accumulative regularizated method for strongly convex--strongly concave problems. The latter achieves the optimal dependence on the curvature parameters and places the smaller curvature inside the accuracy logarithm. Using this method as a subsolver, we propose a proximal point method for nonconvex--concave problems. Under suitable assumptions, it finds an $\epsilon$-stationary point with an operation complexity of $O(\epsilon^{-5/2})$, which improves the best-known $O(\epsilon^{-5/2}\log(1/\epsilon))$ bounds by removing the logarithmic factor. We further develop parameter-free variants for both problem classes, and achieve the same complexity without knowledge of any problem constants. All proposed methods are equipped with verifiable termination criteria.
\end{abstract}

\noindent\textbf{Keywords:}
Minimax optimization, parameter free, first-order methods, accelerated proximal methods,
accumulated regularization, operation complexity.\\

\noindent\textbf{Mathematics Subject Classification:}
90C47, 90C26, 90C25, 65K05.

\section{Introduction}
In this paper, we consider the composite minimax problem
\begin{equation}\label{eq:ncc}
\min_x\max_y\{H(x,y):=h(x,y)+p(x)-q(y)\}.
\end{equation}
Throughout the paper, we let $\mathcal X:=\dom p$ and $\mathcal Y:=\dom q$. We make the following assumptions.
\begin{assumption}\label{ass:ncc}
\begin{enumerate}[label=(\roman*)]
\item $p:\R^m\to(-\infty,+\infty]$ and $q:\R^n\to(-\infty,+\infty]$ are proper closed convex functions, and the proximal operator of $p$ and $q$ can be exactly evaluated.
\item The set $\mathcal Y$ is bounded, namely, we have $\|y\|\leq D_{\mathbf y}$ for some $D_{\mathbf y}\geq1$.
\item $h:\R^{m+n}\to\R$ is $L_{\nabla h}$-smooth on $\mathcal X\times\mathcal Y$ for some $L_{\nabla h}>0$. For any $x\in\mathcal X$, $h(x,\cdot)$ is concave on $\mathcal Y$.
\item Problem~\eqref{eq:ncc} has at least one optimal solution with finite optimal value $H^*=\min_x\max_y H(x,y)$.
\end{enumerate}
\end{assumption}

Problem~\eqref{eq:ncc} provides a common formulation for several learning and decision problems. Examples include distributionally robust learning~\cite{namkoongduchi2016,duchinamkoong2021,rafique2022} and learning with nondecomposable losses, including area-under-the-curve maximization~\cite{rafique2022,ying2016}. In these applications, the minimization variable represents a prediction model, while the maximization variable describes an uncertain distribution or arises from a variational representation of the loss. The convex terms $p$ and $q$ incorporate constraints and nonsmooth regularization. Problems of this form also arise as subproblems in methods for constrained minimax optimization~\cite{lumei2025minimax}. Their scale and composite structure motivate methods based on evaluations of $h$, $\nabla h$, and the proximal operators of $p$ and $q$.

There have been numerous studies on structured special cases of~\eqref{eq:ncc}. When $h(\cdot,y)$ is convex for every $y\in\mathcal Y$, its saddle-point optimality conditions form a monotone inclusion (MI). More generally, consider finding $u$ such that $0\in(F+B)(u)$, where $F$ is monotone and $L_F$-Lipschitz continuous, and $B$ is maximally monotone with an exactly computable resolvent. Proximal-point, splitting, extragradient, and extrapolation methods have been extensively studied for monotone inclusions and their saddle-point special cases~\cite{rockafellar1976,tseng2000,nemirovski2004,malitskytam2020,mokhtari2020,lumei2024}. More recent work establishes accelerated pointwise residual guarantees through anchoring and related extrapolation schemes~\cite{diakonikolas2020,yoonryu2021,caizheng2023,yoonryu2025}. Under $\mu$-strong monotonicity, an $O((L_F/\mu)\log(1/\epsilon))$ bound, with the remaining problem data fixed, can be obtained for the numbers of evaluations of $F$ and the resolvent of $B$, respectively, to find an $\epsilon$-residual solution~\cite{lumei2024}. Such methods apply to strongly convex--strongly concave (SCSC) minimax problems, but treating these problems as general strongly monotone inclusions only uses the smaller of their two curvature parameters.

More specifically, consider an SCSC problem with the same composite structure as~\eqref{eq:ncc}, with $h$ replaced by an $L_{\nabla f}$-smooth function $f$ that is $\mu_x$-strongly convex in $x$ and $\mu_y$-strongly concave in $y$, where $0<\mu_y\le\mu_x\le L_{\nabla f}$. Neither $\mathcal X$ nor $\mathcal Y$ needs to be bounded. A direct application of the above MI complexity has a dependence on $L_{\nabla f}/\mu_y$. Accelerated methods exploit the separate primal and dual curvatures to improve this dependence~\cite{lin2020near,wangli2020,jinsidfordtian2022,kovalev2022}, while lower bounds characterize the attainable dependence on the smoothness and curvature parameters~\cite{zhanghongzhang2022}. In particular, the method in~\cite{kovalev2022} attains the optimal factor $L_{\nabla f}/\sqrt{\mu_x\mu_y}$ by combining a pointwise-conjugate reformulation with acceleration and an MI subsolver. A modification with a verifiable termination criterion for approximate primal-dual stationarity was developed in~\cite{lumei2025minimax}. Nevertheless, when SCSC problems serve as subproblems and $\mu_y$ decreases with the target accuracy, the scale inside the accuracy logarithm also matters. 

We address this issue with two SCSC methods. The first applies an accelerated inexact proximal scheme to a strongly convex reformulation of the saddle problem. It builds on the reformulation in~\cite{kovalev2022}, proximal acceleration~\cite{guler1992,monteirosvaiter2013,linmairalharchaoui2018}, and relative-error criteria for inexact proximal methods~\cite{solodovsvaiter1999,bellocruz2020inexact,barre2022}. The resulting subproblems have comparable primal and dual curvatures and can be solved using an MI subsolver that returns a computed pointwise residual. Our analysis requires only a residual bound proportional to $L_F\|u^0-u^*\|/N$, where $N$ bounds the separate evaluation counts. The second method combines this contraction with accumulated regularization. This approach is related to recursive regularization for gradient minimization~\cite{allenzhu2018} and, more directly, the accumulated-regularization framework in~\cite{lan2024optimal}. By increasing the regularization and updating its center to retain the preceding quadratic terms, we obtain a bound with the smaller curvature $\mu_y$ inside the accuracy logarithm. This improvement is essential for the nonconvex--concave application.

When $h(\cdot,y)$ is nonconvex, most existing methods aim to find an approximate stationary point. This setting has been studied through multistep and two-timescale gradient descent-ascent~\cite{nouiehed2019,lin2020gda}, proximal-point methods~\cite{rafique2022,lin2020near,thekumparampil2019,kong2021,ostrovskii2021}, and smoothing methods~\cite{zhang2020smoothed,zhao2024smoothing,li2026smoothing}. Alternating proximal and projected-gradient methods have also been developed for composite and constrained formulations~\cite{botbohm2023,xu2023agp}. The intermediate nonconvex--strongly concave setting has also received substantial attention, including near-optimal algorithms and lower bounds~\cite{lin2020near,zhang2021complexity}. Two commonly studied criteria in the nonconvex--concave setting are stationarity of the primal value function $\max_y H(\cdot,y)$, measured through its Moreau envelope, and approximate satisfaction of the first-order conditions in both variables. The latter is the $\epsilon$-stationarity criterion used in this paper and is defined in Subsection~1.1. These criteria and the oracles used to attain them must be distinguished when comparing complexity bounds.

For stationarity of the primal value function, the method in~\cite{lin2020near} achieves an $O(\epsilon^{-3}\log^2(1/\epsilon))$ complexity for smooth constrained problems. For stationarity in both variables, the accelerated inexact proximal point smoothing method in~\cite{kong2021} achieves an $O(\epsilon^{-5/2})$ bound under an oracle that exactly solves a regularized maximization problem involving the coupling function. Such an oracle generally requires more than a gradient or a proximal evaluation. Using gradients and the separate proximal operators, an $O(\epsilon^{-5/2}\log(1/\epsilon))$ bound was established in~\cite{lumei2025minimax}; see also~\cite{li2026smoothing} for a smoothing and perturbation approach with the same dependence on $\epsilon$ for game stationarity. Removing this logarithmic factor while retaining computable stopping criteria is the main objective of our nonconvex analysis.

We propose an inexact proximal point method that uses the new SCSC solver to attain this objective. The smaller-curvature residual bound allows the total inner cost to be controlled by the descent over the outer iterations. Consequently, the method finds an $\epsilon$-stationary point with an operation complexity $O(\epsilon^{-5/2})$ . To the best of our knowledge, this paper is the first to remove the logarithmic factor for the general problem class in~\eqref{eq:ncc}. Independent and concurrent work~\cite{panzhengli2026} also removes this factor for a special case of~\eqref{eq:ncc} using a substantially different approach. Moreover, our framework accommodates proximable nonsmooth terms and admits a parameter-free variant that achieves the same complexity bound.

Implementing minimax methods often requires smoothness and curvature parameters or a bound on the dual domain. Adaptive methods have been studied for monotone inclusions and convex optimization~\cite{lumei2024,diakonikolas2020,lan2024optimal,lumei2023apg}. Methods that adapt to unknown smoothness and strong concavity parameters have also been developed for nonconvex--strongly concave problems~\cite{yang2022neada,li2023tiada}, and parameter-free line-search methods have been proposed for the nonconvex--concave setting~\cite{yang2025parameterfree}. We develop parameter-free variants for both problem classes considered here. The SCSC method accepts tentative parameters through a computed residual-reduction test. The nonconvex--concave method adapts its smoothness estimate through a verifiable model inequality, and its parameter-free variant also selects a dual-radius estimate. Their complexity bounds count unsuccessful trials as well as successful ones. The main contributions are summarized below.
\begin{itemize}
\item We develop an accelerated inexact proximal method that reduces the solution distance by a fixed factor within $O(L_{\nabla f}/\sqrt{\mu_x\mu_y})$ evaluations of $\nabla f$, $\prox_p$, and $\prox_q$, respectively. Using it as a subsolver, our accumulative regularization method finds an $\epsilon$-stationary point within $O((L_{\nabla f}/\sqrt{\mu_x\mu_y})\log(1/\epsilon))$ evaluations of each oracle. This bound retains the optimal dependence on the curvature parameters and places the smaller curvature $\mu_y$ inside the accuracy logarithm, which is essential for removing the logarithmic factor in the nonconvex--concave complexity. We also propose a parameter-free variant that preserves the optimal complexity for SCSC minimax problem. All proposed algorithms are equipped with verifiable termination criteria.

\item We develop a proximal point method that finds an $\epsilon$-stationary point of~\eqref{eq:ncc} within $O(\epsilon^{-5/2})$ evaluations of $h$, $\nabla h$, $\prox_p$, and $\prox_q$, respectively, for fixed problem data. To the best of our knowledge, this paper is the first to remove the logarithmic factor from the best-known $O(\epsilon^{-5/2}\log(1/\epsilon))$ complexity bound for the general problem class~\eqref{eq:ncc}, while the independent and concurrent work~\cite{panzhengli2026} obtains the same improvement for a special case of~\eqref{eq:ncc} using a substantially different approach.
 Moreover, our method is parameter-free and equipped with a verifiable termination criterion.

\end{itemize}

The rest of this paper is organized as follows. In Subsection~1.1, we introduce some notation and terminology. In Section~2, we propose first-order methods for SCSC minimax optimization and study their complexity. In Section~3, we propose a first-order method for problem~\eqref{eq:ncc} and its parameter-free variant, and present complexity results for them. Finally, we provide the proofs of the main results in Section~4.

\subsection{Notation and preliminaries}
The following notation will be used throughout this paper. Let $\R^d$ denote the Euclidean space of dimension $d$, $\ip{\cdot}{\cdot}$ denote the standard inner product, and $\norm{\cdot}$ denote the Euclidean norm. We write $\mathbb N=\{1,2,\ldots\}$. For any $a\in\R$, let $a_+=\max\{a,0\}$, and let $\lceil a\rceil$ and $\lfloor a\rfloor$ denote the least integer greater than or equal to $a$ and the greatest integer less than or equal to $a$, respectively. The notation $\log$ denotes the natural logarithm, and $\log_b$ denotes the logarithm to base $b$. Given a point $u\in\R^d$ and a set $S\subseteq\R^d$, let $\dist(u,S)=\inf_{v\in S}\norm{u-v}$, with $\dist(u,\varnothing)=+\infty$.

Given a proper closed convex function $g:\R^d\to(-\infty,+\infty]$, its domain is $\dom g=\{u:g(u)<+\infty\}$, and its subdifferential is
\[
\partial g(u)=\{v:g(z)\ge g(u)+\ip{v}{z-u}\ \text{for all }z\in\R^d\}\qquad\forall u\in\dom g,
\]
with $\partial g(u)=\varnothing$ for $u\notin\dom g$. The proximal operator associated with $g$ is denoted by $\prox_g$ and defined as
\[
\prox_g(u)=\argmin_{z\in\R^d}\left\{g(z)+\frac12\norm{z-u}^2\right\}\qquad\forall u\in\R^d.
\]
We count the evaluation of $\prox_{\gamma g}(u)$ as one evaluation of the proximal operator of $g$ for any $\gamma>0$ and $u\in\R^d$.

A mapping $G$ is said to be $L_G$-Lipschitz continuous on a set $S$ if $\norm{G(u)-G(v)}\le L_G\norm{u-v}$ for all $u,v\in S$. A function $g$ is said to be $L_{\nabla g}$-smooth on $S$ if $\nabla g$ exists on $S$ and is $L_{\nabla g}$-Lipschitz continuous there. A function $g$ is $\mu$-strongly convex on a convex set $S$ if $g-\mu\norm{\cdot}^2/2$ is convex on $S$, and $L$-weakly convex on $S$ if $g+L\norm{\cdot}^2/2$ is convex on $S$. It is $\mu$-strongly concave if $-g$ is $\mu$-strongly convex.

\begin{definition}\label{def:epsilon-stationary}
For $\epsilon\ge0$, a point $(x,y)\in\mathcal X\times\mathcal Y$ is called an $\epsilon$-stationary point of the minimax problem~\eqref{eq:ncc} if
\[
\dist(0,\nabla_x h(x,y)+\partial p(x))\le\epsilon,\qquad
\dist(0,\nabla_y h(x,y)-\partial q(y))\le\epsilon.
\]
We use the same definition for other composite minimax problems, with $h$ replaced by the corresponding smooth coupling.
\end{definition}

For a set-valued operator $T:\R^d\rightrightarrows\R^d$, let $\dom T=\{u:T(u)\ne\varnothing\}$. The operator $T$ is $\mu$-strongly monotone if
\[
\ip{v-w}{u-z}\ge\mu\norm{u-z}^2\qquad\forall u,z\in\dom T,\ v\in T(u),\ w\in T(z),
\]
and monotone if the same inequality holds with $\mu=0$. It is maximally monotone if its graph is not properly contained in the graph of another monotone operator. Let $I$ denote the identity operator. For maximally monotone $T$ and $\gamma>0$, the resolvent $(I+\gamma T)^{-1}$ is single-valued, defined on $\R^d$, and nonexpansive, that is, $1$-Lipschitz continuous. We count the evaluation of $(I+\gamma T)^{-1}(u)$ as one evaluation of the resolvent of $T$ for any $\gamma>0$ and $u\in\R^d$.

Define
\[
A(x,y)=\begin{pmatrix}\nabla_x h(x,y)\\-\nabla_y h(x,y)\end{pmatrix},\qquad
B(x,y)=\begin{pmatrix}\partial p(x)\\\partial q(y)\end{pmatrix}.
\]
The KKT system of~\eqref{eq:ncc} is $0\in(A+B)(u)$. By Assumption~\ref{ass:ncc}(iii), $A$ is $L_{\nabla h}$-Lipschitz continuous on $\mathcal X\times\mathcal Y$, and $A+L_{\nabla h}I$ is monotone there. By Assumption~\ref{ass:ncc}(i), $B$ is maximally monotone, and for every $\gamma>0$,
\[
(I+\gamma B)^{-1}(x,y)=(\prox_{\gamma p}(x),\prox_{\gamma q}(y)).
\]

\section{First-order methods for strongly convex--strongly concave minimax optimization}
\label{sec:algorithm1}

In this section we consider the strongly convex--strongly concave minimax problem
\begin{equation}\label{eq:scsc}
\min_x\max_y\{f(x,y)+p(x)-q(y)\},
\end{equation}
where $p:\R^m\to(-\infty,+\infty]$ and $q:\R^n\to(-\infty,+\infty]$ are proper, closed, convex, and proximable. No boundedness assumption is imposed on their domains. We make the following assumption on $f$.

\begin{assumption}\label{ass:scsc}
The function $f$ is $\mu_x$-strongly convex in $x$, $\mu_y$-strongly concave in $y$, and $L_{\nabla f}$-smooth on $\mathcal X\times\mathcal Y$, where $0<\mu_y\le\mu_x\le L_{\nabla f}$.
\end{assumption}
\begin{remark}
The ordering $\mu_y\le\mu_x$ entails no loss of generality. If $\mu_x<\mu_y$, exchanging the two variable blocks and reversing the objective sign yields the equivalent saddle-point with the curvature bounds and proximal mappings interchanged.
\end{remark}

Under Assumption~\ref{ass:scsc}, problem~\eqref{eq:scsc} has a unique saddle point. For $u=(x,y)$, set
\begin{equation}\label{eq:scsc-operators}
F(u)=\begin{pmatrix}\nabla_x f(x,y)\\-\nabla_y f(x,y)\end{pmatrix},\qquad B(u)=\begin{pmatrix}\partial p(x)\\\partial q(y)\end{pmatrix}.
\end{equation}
The KKT system of~\eqref{eq:scsc} is $0\in(F+B)(u)$. Under Assumption~\ref{ass:scsc}, $F$ is $L_{\nabla f}$-Lipschitz and $\mu_y$-strongly monotone on $\mathcal X\times\mathcal Y$. Moreover, $B$ is maximally monotone, and its resolvent is evaluated by the proximal operators of $p$ and $q$.

More generally, consider monotone inclusion $0\in(F+B)(u)$, where $B$ is maximally monotone with an exactly computable resolvent, $F$ is monotone and $L_F$-Lipschitz on $\dom B$, and $F+B$ is $\mu$-strongly monotone on $\dom B$, with $0<\mu\le L_F$. Suppose this inclusion has a solution $u^*$, which is unique. Although linearly convergent methods are available for strongly monotone inclusions~\cite{lumei2024}, a sublinear pointwise residual bound of order $L_F\norm{u^0-u^*}/(k+1)$ suffices for our analysis. We therefore impose the following requirement on the subsolver.

\begin{assumption}\label{ass:subsolver}
There exist an algorithm $\mcA(F,B,u^0,L_F,\mu)$ and constants $\mathcal C_{\mcA}>0$ and $N_{\mcA}\in\mathbb N$, with $N_{\mcA}\ge1$, such that the following holds. From any $u^0\in\dom B$, one run of $\mcA$ generates, for every $\vartheta>0$, a computed pair $(u,r)$ with
\[
r\in(F+B)(u),\qquad \norm r\le\vartheta\norm{u^0-u^*},
\]
using at most $N_{\mcA}+\lceil\mathcal C_{\mcA}L_F/\vartheta\rceil$ evaluations of $F$ and of the resolvent of $B$, respectively. Every evaluation of $F$ is at a point of $\dom B$. Both counts include initialization and computation of $r$. The constants depend only on $\mcA$, not on the inclusion, its parameters, or its initial point. Neither $u^*$ nor its distance from $u^0$ is an input.
\end{assumption}

\begin{remark}
For the subdifferential operator $B$ used here, the EAG and past-EAG methods in~\cite[Corollary~3.1]{trandinhnguyentrung2025}, with one forward--backward initialization step, satisfy Assumption~\ref{ass:subsolver}. With $\tau_k=1/(k+2)$, one may take $(N_{\mcA},\mathcal C_{\mcA})=(4,22)$ for EAG with stepsize $1/(2L_F)$, and $(N_{\mcA},\mathcal C_{\mcA})=(4,28)$ for past-EAG with stepsize $1/(3L_F)$.
\end{remark}
\subsection{An inexact proximal method for SCSC problems}
\label{sec:contract}

We first develop an accelerated inexact proximal method for the SCSC problem~\eqref{eq:scsc}. Our approach applies the accelerated proximal point framework of~\cite{bellocruz2020inexact} to a strongly convex reformulation of the saddle-point problem. Specifically, we consider
\begin{equation}\label{eq:convex-reformulation}
\min_{\xi,y}\Phi(\xi,y)\quad\mbox{where}\quad
\Phi(\xi,y)=q(y)+\sup_x \left\{\frac{\mu_x}{2}\norm{x-\xi}^2-f(x,y)-p(x)\right\} \qquad\forall y\in\mathcal Y,
\end{equation}
with $\Phi(\xi,y)=+\infty$ for $y\notin\mathcal Y$. The function $\Phi$ is proper, closed, and $\mu_y$-strongly convex, and its unique minimizer coincides with the saddle point $u^*$ of~\eqref{eq:scsc}. The key to implementing this approach is that the proximal step
\[
\min_{v\in\R^{m+n}} \left\{\Phi(v)+\frac{\mu_x}{2}\norm{v-c^k}^2\right\}
\]
can be computed through the balanced saddle problem
\begin{equation}\label{eq:bld}
\min_x\max_y\left\{f(x,y)+p(x)-q(y) -\frac{\mu_x}{4}\norm{x-c_x^k}^2 -\frac{\mu_x}{2}\norm{y-c_y^k}^2\right\}.
\end{equation}
For $\mu_y\leq\mu_x$, the quadratic terms balance the primal and dual curvatures at order $\mu_x$. Moreover, an approximate KKT pair for this problem yields, after a simple correction, an inexact proximal certificate for $\Phi$; see Lemma~\ref{lem:certificate}.

We control the accuracy of the inner solves through a relative-error criterion with an additional absolute allowance, based on the mixed-error setting studied in~\cite{barre2022}. Combining these inexact proximal steps with residual-corrected extrapolation yields Algorithm~\ref{alg:contract} below. The algorithm requires neither evaluating $\Phi$ nor computing the supremum defining it.

\begin{algorithm}[H]
\caption{An inexact proximal method for SCSC contraction}
\label{alg:contract}
\begin{algorithmic}[1]
\REQUIRE Subroutine $\mcA$ satisfying Assumption~\ref{ass:subsolver}, $L_{\nabla f},\mu_x,\mu_y$ satisfying Assumption~\ref{ass:scsc}, $\tilde u^0=c^0=u^0\in\mathcal X\times\mathcal Y$.
\FOR{$k=0,1,\ldots$}
\STATE Call $\mcA(F_k,B,\widehat c^k,L_{\nabla f}+\mu_x,\mu_x/2)$, where
\begin{equation}\label{eq:F-k}
F_k(x,y)=\begin{pmatrix} \nabla_x f(x,y)-\dfrac{\mu_x}{2}(x-c_x^k)\\[1mm]
-\nabla_y f(x,y)+\mu_x(y-c_y^k)
\end{pmatrix}\qquad\forall(x,y)\in\mathcal X\times\mathcal Y,
\end{equation}
\[
\widehat c^k=(I+\gamma_kB)^{-1} \bigl(c^k-\gamma_kF_k(u^0)\bigr),\qquad \gamma_k=\frac{1}{(L_{\nabla f}+\mu_x)(k+2)^2}.
\]
Terminate this call at a computed pair $u^{k+1}=(x^{k+1},y^{k+1})$, $r^{k+1}=(r_x^{k+1},r_y^{k+1})\in(F_k+B)(u^{k+1})$ satisfying
\begin{equation}\label{eq:relative}
\norm{r^{k+1}}\le\frac{\mu_x}{64} \left(\norm{u^{k+1}-c^k}+\frac{\norm{c^k-u^0}}{(k+2)^2}\right).
\end{equation}
\STATE Compute
\begin{align}
&e^{k+1}=(e_x^{k+1},e_y^{k+1})=F(u^{k+1})+r^{k+1}-F_k(u^{k+1}),\label{eq:algorithm1-updates}\\
&\tilde u^{k+1}=u^{k+1}-\frac1{\mu_x}(e_x^{k+1},0),\label{eq:algorithm1-corrected-point}\\
&c^{k+1}=\tilde u^{k+1}+\frac{k}{k+3}(\tilde u^{k+1}-\tilde u^k)+\frac{k+2}{(k+3)\mu_x}(2r_x^{k+1},-r_y^{k+1}).\label{eq:algorithm1-center}
\end{align}
\STATE Terminate and output $(u^{k+1},e^{k+1})$ if
\begin{equation}\label{eq:contract-stop}
\norm{e^{k+1}}\le\mu_x \left(\frac19-\frac{3}{k+2}\sqrt{\frac{\mu_x}{\mu_y}}\right)_+ \norm{u^{k+1}-u^0}.
\end{equation}
\ENDFOR
\end{algorithmic}
\end{algorithm}

\begin{remark}
It can be observed from~\eqref{eq:algorithm1-updates}--\eqref{eq:algorithm1-center} that $e^{k+1}$ is the original KKT residual, $\tilde u^{k+1}$ is an inexact proximal point for $\Phi$, and $c^{k+1}$ is the next extrapolation center. Moreover, the inner test~\eqref{eq:relative} controls the proximal inexactness, while~\eqref{eq:contract-stop} determines when the algorithm returns. The decreasing parameter $\gamma_k$ is used only in the initialization resolvent.
\end{remark}

The following theorem presents our results on the above algorithm, whose proof is deferred to Subsection \ref{subsec:proof-algorithm1}.
\begin{theorem}\label{thm:contract}
Suppose Assumptions~\ref{ass:scsc} and~\ref{ass:subsolver} hold. Then Algorithm~\ref{alg:contract} is well-defined and terminates within $\lceil128\sqrt{\mu_x/\mu_y}\rceil$ iterations. If it terminates at iteration $k$, its output satisfies $e^{k+1}\in(F+B)(u^{k+1})$ and
\begin{equation}\label{eq:contract-guarantee}
\norm{u^{k+1}-u^*}\le\frac18\norm{u^0-u^*},\qquad\norm{e^{k+1}}\le\frac{\mu_x}{8}\norm{u^0-u^*}.
\end{equation}
The numbers of evaluations of $\nabla f$, $\prox_p$, and $\prox_q$ are bounded, respectively, by
\begin{equation}\label{eq:algorithm1-cost}
1+\left\lceil128\sqrt{\frac{\mu_x}{\mu_y}}\right\rceil\left(1+N_{\mcA}+\left\lceil\frac{128\mathcal C_{\mcA}(L_{\nabla f}+\mu_x)}{\mu_x}\right\rceil\right)=O\!\left(\frac{L_{\nabla f}}{\sqrt{\mu_x\mu_y}}\right).
\end{equation}
The hidden constant depends only on $N_{\mcA}$ and $\mathcal C_{\mcA}$.
\end{theorem}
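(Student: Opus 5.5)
The proof splits into three parts: the inner subproblems and their cost, an outer accelerated inexact proximal point analysis on $\Phi$, and the termination criterion.

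\emph{Inner subproblems.} First, $F_k+B$ is the KKT operator of the balanced problem~\eqref{eq:bld}. Its $x$-block is $(\mu_x/2)$-strongly convex, since we subtract $\frac{\mu_x}{4}\norm{x-c_x^k}^2$ from a $\mu_x$-strongly convex function. Its $y$-block is $(\mu_y+\mu_x)$-strongly concave. Hence $F_k$ is $(L_{\nabla f}+\mu_x)$-Lipschitz, and $F_k+B$ is $(\mu_x/2)$-strongly monotone. The operator $F_k$ itself is monotone because its symmetric part is at least $\frac{\mu_x}{2}I$ in $x$. So Assumption~\ref{ass:subsolver} applies with the stated parameters, and the initialization point $\widehat c^k$ lies in $\dom B$. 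Let $\bar u^{k}$ be the solution of~\eqref{eq:bld}.

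To show the call terminates, take $\vartheta$ proportional to $\mu_x/128$. Strong monotonicity gives
\[
\norm{u-\bar u^k}\le 2\norm r/\mu_x .
\]
The initial distance $\norm{\widehat c^k-\bar u^k}$ is bounded by a constant times $\norm{c^k-\bar u^k}$ plus $\gamma_k$-terms, using nonexpansiveness of the resolvent. Since $\gamma_k$ involves $(k+2)^{-2}$, the $\gamma_k F_k(u^0)$ perturbation is absorbed by the allowance $\norm{c^k-u^0}/(k+2)^2$ in~\eqref{eq:relative}. Moreover, $\norm{c^k-\bar u^k}\le\norm{u^{k+1}-c^k}+\norm{u^{k+1}-\bar u^k}$. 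Together these show that once $\norm r\le\vartheta\norm{\widehat c^k-\bar u^k}$ with $\vartheta=\mu_x/128$, test~\eqref{eq:relative} holds. The per-call cost is therefore $N_{\mcA}+\lceil128\mathcal C_{\mcA}(L_{\nabla f}+\mu_x)/\mu_x\rceil$, plus one evaluation for $\widehat c^k$. This gives~\eqref{eq:algorithm1-cost} once the iteration bound is established.

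\emph{Outer analysis.} Next, I would invoke Lemma~\ref{lem:certificate} (stated earlier as the certificate lemma). It shows that $(u^{k+1},r^{k+1})$ together with the correction~\eqref{eq:algorithm1-corrected-point} yields $\tilde u^{k+1}$ and a subgradient
\[
v^{k+1}\in\partial_{\varepsilon}\Phi(\tilde u^{k+1})\quad\text{with}\quad v^{k+1}+\mu_x(\tilde u^{k+1}-c^k)\ \text{small},
\]
in the relative-error sense of~\cite{barre2022,bellocruz2020inexact}. Then I run the standard estimate-sequence/Lyapunov argument for accelerated inexact proximal point with weights $(k+2)/(k+3)$, as in~\eqref{eq:algorithm1-center}. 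I expect a bound of the form
\[
\Phi(\tilde u^{k})-\Phi^*+\frac{\mu_y}{2}\norm{\tilde u^k-u^*}^2\lesssim\frac{\mu_x\norm{u^0-u^*}^2}{(k+2)^2},
\]
where the absolute errors $\norm{c^k-u^0}/(k+2)^2$ are summable against the weights and are controlled inductively by $\norm{u^0-u^*}$. Strong convexity of $\Phi$ then gives
\[
\norm{\tilde u^k-u^*}\le C\sqrt{\mu_x/\mu_y}\,\norm{u^0-u^*}/(k+2).
\]
Transferring this to $u^{k+1}$ and to $e^{k+1}$ uses $\norm{e^{k+1}}\le \tfrac{3}{2}\mu_x\norm{u^{k+1}-c^k}+\norm{r^{k+1}}$ and the corresponding bound on $\norm{u^{k+1}-c^k}$.

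\emph{Termination criterion.} There are two directions. For correctness, $e^{k+1}\in(F+B)(u^{k+1})$ holds by construction. Strong monotonicity with modulus $\mu_y$ is too weak, so instead I use the stop test together with the rate: whenever~\eqref{eq:contract-stop} holds, the distance bound combined with the triangle inequality $\norm{u^{k+1}-u^0}\le\norm{u^{k+1}-u^*}+\norm{u^0-u^*}$ should yield~\eqref{eq:contract-guarantee}. The factor $\frac19$ is chosen so that $\frac19\cdot\frac98=\frac18$. I expect to combine the test with the $\Phi$-based bound $\norm{u^{k+1}-u^*}\le 3\sqrt{\mu_x/\mu_y}\norm{u^0-u^*}/(k+2)$, which is exactly the subtracted term. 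For the iteration bound, at $k+2\ge128\sqrt{\mu_x/\mu_y}$ the rate forces $\norm{e^{k+1}}$ below the right-hand side of~\eqref{eq:contract-stop}, which yields the $\lceil128\sqrt{\mu_x/\mu_y}\rceil$ iteration count.

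\emph{Main obstacle.} The hard step is matching constants in the certificate and Lyapunov argument. The mixed relative/absolute error must not destroy the $1/(k+2)^2$ rate, and the extrapolation correction $(2r_x,-r_y)$ must exactly cancel the inexactness terms. I would first verify the one-step Lyapunov inequality by direct expansion, treating the absolute term as a perturbation bounded by induction.
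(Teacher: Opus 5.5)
Your plan is essentially the paper's proof. It has the same ingredients: the inner cost via the nonexpansive forward--backward initialization plus strong-monotonicity absorption into~\eqref{eq:relative}; the exact-subgradient certificate for $\Phi$ at the corrected point $\tilde u^{k+1}$; an accelerated inexact proximal potential with absolute errors controlled inductively through $\norm{c^k-u^0}\le3\norm{u^0-u^*}$; and termination by combining~\eqref{eq:contract-stop} with the rate. The paper builds the potential on the auxiliary point $w^i=\frac{i+2}{2}c^i-\frac{i}{2}\tilde u^i$, for which the $(2r_x,-r_y)$ correction gives exactly $w^{i+1}=w^i-\frac{i+2}{2\mu_x}(-e_x^{i+1},e_y^{i+1})$.

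Two details need tightening. First, the $\Phi$-based bound $3\sqrt{\mu_x/\mu_y}\,\norm{u^0-u^*}/(k+2)$ holds for $\tilde u^{k+1}$, not $u^{k+1}$. The cancellation with the subtracted term therefore comes from $\norm{u^{k+1}-u^*}\le\norm{\tilde u^{k+1}-u^*}+\norm{e^{k+1}}/\mu_x$, with the last term bounded by~\eqref{eq:contract-stop} and then rearranged to give $\frac18$; the case $e^{k+1}=0$ is settled separately by strong monotonicity. Second, the explicit constant $128$ rests on the $\sqrt{\mu_x/\mu_y}$-free rate $\norm{e^{k+1}}\le5\mu_x\norm{u^0-u^*}/(k+2)$. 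The paper gets this by keeping $\sum_{i}(i+2)^2\norm{\tilde u^{i+1}-c^i}^2$ in the potential, which your displayed Lyapunov bound drops, so that $\norm{\tilde u^{k+1}-c^k}\le4\norm{u^0-u^*}/(k+2)$, and then bounding $e^{k+1}$ through $\tilde u^{k+1}-c^k$ and the certificate inequality.
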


The proof is given in Section~\ref{subsec:proof-algorithm1}. The iteration bound is used only in the analysis; the algorithm stops by~\eqref{eq:contract-stop}.
\subsection{An accumulative regularizated method for SCSC problems}
\label{sec:scsc}

In this subsection we use Algorithm~\ref{alg:contract} to find an $\epsilon$-stationary point of~\eqref{eq:scsc}, where $\epsilon>0$ is given. Algorithm~\ref{alg:contract} reduces the initial solution distance by an absolute factor. Its residual bound, however, involves $\mu_x$. We combine these contractions with accumulated regularization to obtain an evaluation bound involving the smaller curvature $\mu_y$ inside the accuracy logarithm.

At each regularized iteration we approximately solve a saddle problem with center $\bar u=(\bar u_x,\bar u_y)$ of the form
\[
\min_x\max_y\left\{f(x,y)+p(x)-q(y)+\frac\lambda2\norm{x-\bar u_x}^2-\frac\lambda2\norm{y-\bar u_y}^2\right\},\qquad\lambda>0.
\]
Following the accumulated-center construction in~\cite[Algorithm~2.1]{lan2024optimal}, we increase $\lambda$ and update $\bar u$ so that the preceding quadratic terms are retained. One preliminary call to Algorithm~\ref{alg:contract} bounds the remaining solution distance by its computed displacement. We use this displacement and $\epsilon$ to choose the initial regularization, and then continue the accumulation until the original residual is at most $\epsilon$.

\begin{algorithm}[H]
\caption{Am accumulated regularizated method for SCSC minimax optimization}
\label{alg:scsc}
\begin{algorithmic}[1]
\REQUIRE Subroutine $\mcA$ satisfying Assumption~\ref{ass:subsolver}, $L_{\nabla f},\mu_x,\mu_y$ satisfying Assumption~\ref{ass:scsc}, $u^0\in\mathcal X\times\mathcal Y$, $\epsilon>0$.
\STATE Call Algorithm~\ref{alg:contract} with $(L_{\nabla f},\mu_x,\mu_y,u^0,\mcA)\leftarrow (L_{\nabla f},\mu_x,\mu_y,u^0,\mcA)$ for solving problem~\eqref{eq:scsc}, and denote its output by $(u^1,e^1)$. Terminate and output $(u^1,e^1)$ if $\norm{e^1}\le\epsilon$. Otherwise, set $\bar u^1=u^1$ and $\lambda_1=\epsilon/\norm{u^1-u^0}$.\par
\FOR{$k=1,2,\ldots$}
\STATE Call Algorithm~\ref{alg:contract} with $(L_{\nabla f},\mu_x,\mu_y,u^0,\mcA)\leftarrow(L_{\nabla f}+\lambda_k,\mu_x+\lambda_k,\mu_y+\lambda_k,u^k,\mcA)$ for solving $\min_x\max_y\{f_k(x,y)+p(x)-q(y)\}$ and denote its output by $(u^{k+1},r^{k+1})$, where
\[
f_k(x,y)=f(x,y)+\frac{\lambda_k}{2}\norm{x-\bar u_x^k}^2-\frac{\lambda_k}{2}\norm{y-\bar u_y^k}^2.
\]
\STATE Compute $e^{k+1}=r^{k+1}-\lambda_k(u^{k+1}-\bar u^k)$.
\STATE Terminate and output $(u^{k+1},e^{k+1})$ if $\norm{e^{k+1}}\le\epsilon$. Otherwise, set $\lambda_{k+1}=4\lambda_k$ and $\bar u^{k+1}=(\bar u^k+3u^{k+1})/4$.
\ENDFOR
\end{algorithmic}
\end{algorithm}

\begin{remark}
The preliminary call determines $\lambda_1$ from the computed displacement. At each regularized iteration, $r^{k+1}$ is the residual of the regularized subproblem, and step~4 converts it to the original residual $e^{k+1}$ used in the stopping test.
\end{remark}

The following theorem presents our results on the above algorithm, whose proof is deferred to Subsection \ref{subsec:proof-algorithm2}.
\begin{theorem}\label{thm:scsc}
Suppose Assumptions~\ref{ass:scsc} and~\ref{ass:subsolver} hold. Then Algorithm~\ref{alg:scsc} is well-defined and terminates. Its output $(u^{k+1},e^{k+1})$, with $k=0$ for a return in step~1, satisfies $e^{k+1}\in(F+B)(u^{k+1})$ and $\norm{e^{k+1}}\le\epsilon$. The numbers of evaluations of $\nabla f$, $\prox_p$, and $\prox_q$ are bounded, respectively, by
\begin{equation}\label{eq:scsc-cost}
\begin{aligned}[b]
&2^{11}(N_{\mcA}+256\mathcal C_{\mcA}+3)\frac{L_{\nabla f}}{\sqrt{\mu_x\mu_y}}\left(1+\log_4\!\left(1+\frac{\mu_y\norm{u^0-u^*}}{\epsilon}\right)\right)\\
&=O\!\left(\frac{L_{\nabla f}}{\sqrt{\mu_x\mu_y}}\left(1+\log\!\left(1+\frac{\mu_y\norm{u^0-u^*}}{\epsilon}\right)\right)\right),
\end{aligned}
\end{equation}
where $u^*$ is the saddle point of~\eqref{eq:scsc} and the hidden constant depends only on $N_{\mcA}$ and $\mathcal C_{\mcA}$.
\end{theorem}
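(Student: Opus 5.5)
We prove Theorem~\ref{thm:scsc} by treating each call to Algorithm~\ref{alg:contract} as a black-box contraction given by Theorem~\ref{thm:contract}, applied to the regularized saddle problem, and combining it with the accumulated-regularization bookkeeping of~\cite{lan2024optimal}.

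\emph{Output residual.} The preliminary call returns $e^1\in(F+B)(u^1)$, $\norm{u^1-u^*}\le\frac18\norm{u^0-u^*}$ and $\norm{e^1}\le\frac{\mu_x}{8}\norm{u^0-u^*}$. For $k\ge1$, $f_k$ satisfies Assumption~\ref{ass:scsc} with parameters $(L_{\nabla f}+\lambda_k,\mu_x+\lambda_k,\mu_y+\lambda_k)$, and the ordering is preserved. Theorem~\ref{thm:contract} therefore gives $r^{k+1}\in(F^k+B)(u^{k+1})$, where $F^k=F+\lambda_k(\cdot-\bar u^k)$ with the sign convention of~\eqref{eq:scsc-operators}. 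Hence $e^{k+1}=r^{k+1}-\lambda_k(u^{k+1}-\bar u^k)\in(F+B)(u^{k+1})$. It also gives
\[
\norm{u^{k+1}-u_k^*}\le\tfrac18\norm{u^k-u_k^*},\qquad \norm{r^{k+1}}\le\tfrac{\mu_x+\lambda_k}{8}\norm{u^k-u_k^*},
\]
where $u_k^*$ is the saddle point of the $k$th regularized problem.

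\emph{Accumulated-center recursion.} By induction, $\lambda_k\norm{\cdot-\bar u^k}^2$ equals $\lambda_1\norm{\cdot-\bar u^1}^2+\sum_{j<k}3\lambda_j\norm{\cdot-u^{j+1}}^2$ up to a constant, since $\lambda_{k+1}=4\lambda_k$ and $\bar u^{k+1}=(\bar u^k+3u^{k+1})/4$. Using strong monotonicity of $F+B$ with modulus $\mu_y$, and comparing the optimality conditions of $u_k^*$ and $u_{k+1}^*$, I would prove a potential bound of the form
\[
\norm{u^{k+1}-u_k^*}+\norm{u_k^*-\bar u^k}\le C\,4^{-k}\norm{u^1-u^0}\quad\text{(or }\lesssim(\mu_y+\lambda_1)^{-1}\lambda_1 D_1\text{)},
\]
with $D_1=\norm{u^0-u^*}$. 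This gives
\[
\norm{e^{k+1}}\le\norm{r^{k+1}}+\lambda_k\norm{u^{k+1}-\bar u^k}\lesssim(\mu_x+\lambda_k)\norm{u^k-u_k^*}+\lambda_k\norm{u^{k+1}-\bar u^k}.
\]
The key point is that $\lambda_k$ grows geometrically while the distances shrink, so the residual behaves like $\epsilon\cdot(\text{const})\cdot 2^{-k}$-type decay relative to $\lambda_1\norm{u^1-u^0}=\epsilon$. The $\frac18$ contraction factor is what makes the constants close.

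\emph{Main obstacle.} I expect the hardest step to be showing that termination occurs once $\lambda_k\gtrsim\mu_y$-scaled quantities, so that the number of outer iterations is $O(1+\log_4(1+\mu_y\norm{u^0-u^*}/\epsilon))$ rather than a count involving $\mu_x$. I would first establish $\norm{u_k^*-\bar u^k}\le\frac{\mu_y\,\text{dist}}{\mu_y+\lambda_k}$-type estimates via $\mu_y$-strong monotonicity: pairing $0\in F^k+B$ at $u_k^*$ with $0\in F+B$ at $u^*$ gives $\norm{u_k^*-\bar u^k}\le\frac{\mu_y}{\mu_y+\lambda_k}\norm{u^*-\bar u^k}$-like bounds. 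I would then use $\norm{u^1-u^0}\le\frac98\norm{u^0-u^*}$ to relate $\lambda_1$ to $\epsilon/\norm{u^0-u^*}$.

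\emph{Cost.} The cost of each call is, by~\eqref{eq:algorithm1-cost}, $O\bigl((L_{\nabla f}+\lambda_k)/\sqrt{(\mu_x+\lambda_k)(\mu_y+\lambda_k)}\bigr)\le O\bigl(L_{\nabla f}/\sqrt{\mu_x\mu_y}\bigr)$, since $t\mapsto(L+t)/\sqrt{(\mu_x+t)(\mu_y+t)}$ is nonincreasing when $L\ge\mu_x\ge\mu_y$. Summing over the outer iterations, and collecting the explicit constant from~\eqref{eq:algorithm1-cost} with $128\cdot2$ slack, yields~\eqref{eq:scsc-cost}.
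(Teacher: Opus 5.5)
\textbf{There is a genuine gap in your counting argument.} The output inclusion, the contraction bounds you take from Theorem~\ref{thm:contract}, and the monotonicity of $t\mapsto(L_{\nabla f}+t)/\sqrt{(\mu_x+t)(\mu_y+t)}$ are all correct. The problem is the step you call the main obstacle: that the loop stops after $O\bigl(1+\log_4(1+\mu_y\norm{u^0-u^*}/\epsilon)\bigr)$ outer iterations. This cannot be derived from Theorem~\ref{thm:contract}, because its residual guarantee $\norm{r^{k+1}}\le\frac{\mu_x+\lambda_k}{8}\norm{u^k-u_*^k}$ carries $\mu_x$.

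The estimate one actually obtains is $\norm{e^{k+1}}\le\bigl(\frac94\lambda_1+(\mu_x+2\lambda_k)8^{-k}\bigr)\norm{u^1-u^*}$. Its term $\mu_x8^{-k}\norm{u^1-u^*}$ becomes a small fraction of $\epsilon=\lambda_1\norm{u^1-u^0}$ only after roughly $\log_8(\mu_x/\lambda_1)$ iterations. Accordingly, the paper certifies termination only at the first $k$ with $\lambda_k\ge\mu_x$. Even when $\mu_y\norm{u^0-u^*}/\epsilon$ is of order one, so that $\lambda_1\approx\mu_y$, this is about $\log(\mu_x/\mu_y)$ iterations. Multiplying your uniform per-call bound $O(L_{\nabla f}/\sqrt{\mu_x\mu_y})$ by this count gives $O\bigl(\frac{L_{\nabla f}}{\sqrt{\mu_x\mu_y}}\log(1+\mu_x\norm{u^0-u^*}/\epsilon)\bigr)$. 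That puts $\mu_x$ rather than $\mu_y$ inside the logarithm, which is exactly the dependence the theorem is designed to remove.

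\textbf{The missing idea} is that the per-call cost is not uniform in $\lambda_i$, and the cost must be summed by splitting the iterations into three ranges.
\begin{enumerate}
\item For $\lambda_i<\mu_y$: there are at most $\lceil(\log_4(\mu_y/\lambda_1))_+\rceil$ such calls, each costing $O(L_{\nabla f}/\sqrt{\mu_x\mu_y})$. Since $\mu_y/\lambda_1=\mu_y\norm{u^1-u^0}/\epsilon\le\frac98\mu_y\norm{u^0-u^*}/\epsilon$, this is the only source of the logarithm.
\item For $\mu_y\le\lambda_i<\mu_x$: the per-call factor satisfies $\frac{L_{\nabla f}+\lambda_i}{\sqrt{(\mu_x+\lambda_i)(\mu_y+\lambda_i)}}\le\frac{2L_{\nabla f}}{\sqrt{\mu_x\lambda_i}}$. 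This halves each time $\lambda_i$ is multiplied by $4$, so all these calls together cost $O(L_{\nabla f}/\sqrt{\mu_x\mu_y})$ however many there are.
\item For $\lambda_i\ge\mu_x$: there is at most one such call, since there $\norm{e^{k+1}}\le\frac{21}{8}\lambda_1\norm{u^1-u^*}\le\frac38\epsilon$, using $7\norm{u^1-u^*}\le\norm{u^1-u^0}$.
\end{enumerate}

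\textbf{The termination step also needs a different argument from the one you sketch.} It does not come from the residual decaying like $2^{-k}$. The bias term $\lambda_k(\bar u^k-u_*^k)$ does not decay, and you must show it stays below $\frac94\lambda_1\norm{u^1-u^*}$, which is a fixed fraction of $\epsilon$. The paper does this in two steps. First, it proves the comparison $\norm{u^i-u_*^i}\le\norm{u^i-u_*^{i-1}}$, using monotonicity of $u\mapsto(F+B)(u)+\lambda_{i-1}(u-\bar u^{i-1})$ together with the accumulated-center identity. Second, it bounds the bias by a telescoping sum.

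Your suggested estimate $\norm{u_*^k-\bar u^k}\le\frac{\mu_y}{\mu_y+\lambda_k}\norm{u^*-\bar u^k}$ cannot replace this step, because it is false in general. It already fails for $F+B=cI$ with a constant $c>\mu_y$. The pairing you describe actually yields $\norm{u_*^k-u^*}\le\frac{\lambda_k}{\mu_y+\lambda_k}\norm{\bar u^k-u^*}$, which is a different quantity.
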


\subsection{A parameter-free method for SCSC problems}\label{sec:pf-scsc}

In this subsection we develop a variant of Algorithm~\ref{alg:scsc} for solving~\eqref{eq:scsc} without knowing $L_{\nabla f}$, $\mu_x$, or $\mu_y$. We call Algorithm~\ref{alg:scsc} with tentative smoothness and curvature parameters and accept its output when the residual is sufficiently reduced.

In addition to Assumption~\ref{ass:subsolver}, we require an implementation of $\mcA$ that is well-defined for all positive supplied parameters and can be interrupted before an evaluation of $F$ or the resolvent of $B$. These requirements apply to every supplied inclusion, even when its smoothness and curvature parameters are invalid.
No convergence guarantee is required for invalid parameters.

For the simplicity of our discussion, let $\mathcal I$ be the list of integer triples $(i,j,k)$ with $i\ge0$ and $0\le j\le k$, ordered by increasing $2i+j+k$, with any fixed order for ties.
Each call to Algorithm~\ref{alg:scsc} below is called a trial. Its evaluation limit applies separately to the numbers of evaluations of $\nabla f$, $\prox_p$, and $\prox_q$, including all inner calls, initializations, and residual computations. 

\begin{algorithm}[H]
\caption{A parameter-free method for SCSC minimax optimization}
\label{alg:pf-scsc}
\begin{algorithmic}[1]
\REQUIRE Subroutine $\mcA$ satisfying Assumption~\ref{ass:subsolver}, $u^0=(u_x^0,u_y^0)\in\mathcal X\times\mathcal Y$, $d\ne0$ with $u_x^0+d\in\mathcal X$, and $\epsilon>0$.
\STATE Set $\rho=\norm{\nabla_x f(u_x^0+d,u_y^0)-\nabla_x f(u_x^0,u_y^0)}/\norm d$.
\STATE Compute $u^1=(I+\rho^{-1}B)^{-1}\bigl(u^0-\rho^{-1}F(u^0)\bigr)$ and $r^1=\rho(u^0-u^1)+F(u^1)-F(u^0)$.
\STATE Terminate and output $u^1$ if $\norm{r^1}\le\epsilon$. Otherwise, set $(i_1,j_1,k_1)=(0,0,0)$.
\FOR{$t=1,2,\ldots$}
\STATE Set $U_t=\rho 2^{i_t}$, $\alpha_t=\rho 2^{-j_t}$, and $\nu_t=\rho 2^{-k_t}$.
\STATE Call Algorithm~\ref{alg:scsc} with $(L_{\nabla f},\mu_x,\mu_y,u^0,\epsilon,\mcA)\leftarrow(U_t,\alpha_t,\nu_t,u^t,\max\{\epsilon,\norm{r^t}/8\},\mcA)$ for solving problem~\eqref{eq:scsc}, allowing at most $\lceil\widetilde{\mcC}_{\mcA}U_t/\sqrt{\alpha_t\nu_t}\rceil$ evaluations of $\nabla f$, $\prox_p$, and $\prox_q$, respectively, where $\widetilde{\mcC}_{\mcA}=2^{13}(N_{\mcA}+256\mathcal C_{\mcA}+3)$. Stop the call and declare the trial unsuccessful before any evaluation exceeding its corresponding limit. If the call terminates with an output, denote it by $(\tilde u^{t+1},e^{t+1})$.
\STATE Terminate and output $\tilde u^{t+1}$ if the call returns a pair satisfying $\norm{e^{t+1}}\le\epsilon$.
\IF{the call returns a pair satisfying $\norm{e^{t+1}}\le\norm{r^t}/2$}
\STATE Set $(u^{t+1},r^{t+1})=(\tilde u^{t+1},e^{t+1})$ and $(i_{t+1},j_{t+1},k_{t+1})=(i_t,j_t,k_t)$.
\ELSE
\STATE Set $(u^{t+1},r^{t+1})=(u^t,r^t)$ and let $(i_{t+1},j_{t+1},k_{t+1})$ be the next triple after $(i_t,j_t,k_t)$ in $\mathcal I$.
\ENDIF
\ENDFOR
\end{algorithmic}
\end{algorithm}

\begin{remark}
For the update of $(i_{t+1},j_{t+1},k_{t+1})$ in step 11, one may choose $(i,j,k)$ lexicographically among triples with the same value of $2i+j+k$. The resulting list $\mathcal I$ starts with $(0,0,0),(0,0,1),(0,0,2),(0,1,1),(1,0,0),\ldots$. A successful trial retains the current triple, while an unsuccessful trial advances to the next triple and retains the current point and residual.
\end{remark}

The following theorem presents our results on the above algorithm, whose proof is deferred to Subsection \ref{subsec:proof-algorithm3}.
\begin{theorem}\label{thm:pf-scsc}
Suppose Assumption~\ref{ass:scsc} holds, and $\mcA$ satisfies Assumption~\ref{ass:subsolver} and the additional execution requirements in this subsection. Then Algorithm~\ref{alg:pf-scsc} is well-defined and terminates with an $\epsilon$-stationary point of~\eqref{eq:scsc} for every input satisfying its requirements. The numbers of evaluations of $\nabla f$, $\prox_p$, and $\prox_q$ are bounded, respectively, by
\begin{equation}\label{eq:pf-complexity}
\begin{aligned}[b]
&3+4(\widetilde{\mcC}_{\mcA}+1)\frac{L_{\nabla f}}{\sqrt{\mu_x\mu_y}}\left((2+\sqrt2)\left(2\log_2\frac{L_{\nabla f}}{\sqrt{\mu_x\mu_y}}+6\right)^2+\left\lceil\log_2\!\left(1+\frac{4L_{\nabla f}^2\norm{u^0-u^*}}{\mu_x\epsilon}\right)\right\rceil\right)\\
&=O\!\left(\frac{L_{\nabla f}}{\sqrt{\mu_x\mu_y}}\left(\log^2\!\left(2+\frac{L_{\nabla f}}{\sqrt{\mu_x\mu_y}}\right)+\log\!\left(1+\frac{\mu_y\norm{u^0-u^*}}\epsilon\right)\right)\right),
\end{aligned}
\end{equation}
where $u^*$ is the saddle point of~\eqref{eq:scsc} and $\widetilde{\mcC}_{\mcA}$ is defined in step~6 of Algorithm~\ref{alg:pf-scsc}.
\end{theorem}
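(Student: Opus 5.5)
The argument has two parts: a deterministic accounting of the trial schedule over $\mathcal I$, and the observation that once the tentative parameters are valid, every trial succeeds.

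\emph{Initialization.} Since $u_x^0+d\in\mathcal X$, Assumption~\ref{ass:scsc} gives $\mu_x\le\rho\le L_{\nabla f}$. Step~2 is one forward--backward step, so $r^1\in(F+B)(u^1)$. Strong monotonicity and the Lipschitz bound then give
\[
\norm{r^1}\le(\rho+L_{\nabla f})\norm{u^0-u^1}\le 2L_{\nabla f}\norm{u^0-u^1}.
\]
Using the standard forward--backward estimate, $\norm{u^1-u^*}\le(1+L_{\nabla f}/\rho)\norm{u^0-u^*}$, which yields
\[
\norm{r^1}\le 4L_{\nabla f}^2\norm{u^0-u^*}/\mu_x.
\]
This is the quantity inside the last logarithm. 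By induction, every retained pair satisfies $r^t\in(F+B)(u^t)$, because Theorem~\ref{thm:scsc} is applied only through the outputs of Algorithm~\ref{alg:scsc}, and those outputs always satisfy the inclusion when the call returns. This requires only that step~4 of Algorithm~\ref{alg:scsc} is an algebraic identity; I will check that it holds for arbitrary tentative parameters. Successful trials halve $\norm{r^t}$, so there are at most $\lceil\log_2(\norm{r^1}/\epsilon)\rceil$ of them before $\norm{r^t}\le\epsilon$ forces termination.

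\emph{Valid triples.} Call a triple valid if $U_t\ge L_{\nabla f}$, $\alpha_t\le\mu_x$, $\nu_t\le\mu_y$, and $\nu_t\le\alpha_t\le U_t$. Every valid triple sits in a position of $\mathcal I$ no later than the first triple with $2^{i}\ge L_{\nabla f}/\rho$, $2^{-j}\le \mu_x/\rho$, and $2^{-k}\le\mu_y/\rho$. For that triple, $2i+j+k\le 2\log_2(L_{\nabla f}/\sqrt{\mu_x\mu_y})+O(1)$, because $\rho\in[\mu_x,L_{\nabla f}]$. I expect the main obstacle to be the next step. Under valid parameters, Assumption~\ref{ass:scsc} holds with $(U_t,\alpha_t,\nu_t)$, so Theorem~\ref{thm:scsc} applies with $\epsilon_t=\max\{\epsilon,\norm{r^t}/8\}$. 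I must show two things:
\begin{enumerate}
\item The cost bound of Theorem~\ref{thm:scsc} is at most $\widetilde{\mcC}_{\mcA}U_t/(4\sqrt{\alpha_t\nu_t})$, so the call is never interrupted. For this I need to bound the logarithmic term by a constant. Since $r^t\in(F+B)(u^t)$, strong monotonicity gives $\nu_t\norm{u^t-u^*}\le\mu_y\norm{u^t-u^*}\le\norm{r^t}\le8\epsilon_t$. Hence $\log_4(1+\nu_t\norm{u^t-u^*}/\epsilon_t)\le\log_4 9<2$, which explains the factor $2^{13}=4\cdot2^{11}$.
\item The output satisfies $\norm{e^{t+1}}\le\epsilon_t$, which is at most $\norm{r^t}/2$ unless $\norm{e^{t+1}}\le\epsilon$. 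Either way the trial succeeds or the algorithm terminates.
\end{enumerate}
Therefore the triple index never advances past a valid triple. The number of triple changes is bounded by the position of that triple in $\mathcal I$, roughly $\#\{2i+j+k\le M\}=O(M^3)$.

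\emph{Cost accounting.} To obtain the $M^2$ growth with the geometric factor $(2+\sqrt2)$, I sum the per-trial limit $\widetilde{\mcC}_{\mcA}\rho2^{i+(j+k)/2}+1$ over triples, grouped by $s=2i+j+k$. Each triple in level $s$ costs $2^{s/2}$ times constant factors, and level $s$ has $O(s^2)$ triples. The geometric sum in $\sqrt2$ up to the level $M$ of the valid triple is then $(2+\sqrt2)2^{M/2}M^2$, and $2^{M/2}$ is comparable to $L_{\nabla f}/\sqrt{\mu_x\mu_y}$. The unsuccessful trials contribute this sum. The successful trials all use triples at levels at most $M$, so each costs at most $4(\widetilde{\mcC}_{\mcA}+1)L_{\nabla f}/\sqrt{\mu_x\mu_y}$, multiplied by the count of successes, which is the log term. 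Successful trials at earlier invalid triples also stay within their limits, so they are covered by the same bound. The three evaluations in steps~1--2 supply the leading constant $3$. Finally, the $O$ form uses $L_{\nabla f}^2/(\mu_x\mu_y)\ge1$, which converts $\log(1+4L_{\nabla f}^2\norm{u^0-u^*}/(\mu_x\epsilon))$ into $O(\log(2+L_{\nabla f}/\sqrt{\mu_x\mu_y})+\log(1+\mu_y\norm{u^0-u^*}/\epsilon))$.
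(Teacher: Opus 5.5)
Your outline follows the paper's proof essentially step for step:
\begin{itemize}
\item the bound $\mu_x\le\rho\le L_{\nabla f}$ and an initial residual estimate;
\item the triple $(i_*,j_*,k_*)$ at level $q_*\le2\log_2(L_{\nabla f}/\sqrt{\mu_x\mu_y})+4$, at which every trial succeeds or terminates, via $\nu_t\norm{u^t-u^*}\le\norm{r^t}\le8\max\{\epsilon,\norm{r^t}/8\}$;
\item at most $\lceil\log_2(\norm{r^1}/\epsilon)\rceil$ successful trials;
\item at most $(q+2)^2$ unsuccessful trials at each level $q$, each costing $(\widetilde{\mcC}_{\mcA}+1)2^{q/2}$, summed geometrically.
\end{itemize}

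The one step that does not deliver the stated constant is the initial residual bound. The estimate $\norm{u^1-u^*}\le(1+L_{\nabla f}/\rho)\norm{u^0-u^*}$ only yields $\norm{u^0-u^1}\le(2+L_{\nabla f}/\rho)\norm{u^0-u^*}$. Hence it gives $\norm{r^1}\le(\rho+L_{\nabla f})(2\rho+L_{\nabla f})\rho^{-1}\norm{u^0-u^*}$. When $\rho=\mu_x=L_{\nabla f}$ this equals $6L_{\nabla f}\norm{u^0-u^*}$, not $4L_{\nabla f}\norm{u^0-u^*}$. In general your chain gives only $6L_{\nabla f}^2/\mu_x$ in place of the factor $4L_{\nabla f}^2/\mu_x$ inside the logarithm in~\eqref{eq:pf-complexity}.

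The fix is to bound $\norm{u^0-u^1}$ directly, as the paper does. Apply monotonicity of $B$ to $\rho(u^0-u^1)-F(u^0)\in B(u^1)$ and $-F(u^*)\in B(u^*)$, and combine with monotonicity of $F$. This gives $\rho\norm{u^0-u^1}^2\le\ip{\rho(u^0-u^*)+F(u^0)-F(u^*)}{u^0-u^1}$. Therefore $\norm{u^0-u^1}\le(1+L_{\nabla f}/\rho)\norm{u^0-u^*}$, and $\norm{r^1}\le(\rho+L_{\nabla f})^2\rho^{-1}\norm{u^0-u^*}\le4L_{\nabla f}^2\mu_x^{-1}\norm{u^0-u^*}$.

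Two smaller slips:
\begin{itemize}
\item At a valid triple, the cost from Theorem~\ref{thm:scsc} is below $\widetilde{\mcC}_{\mcA}U_t/\sqrt{\alpha_t\nu_t}$, not below $\widetilde{\mcC}_{\mcA}U_t/(4\sqrt{\alpha_t\nu_t})$. The factor $4$ is exactly what absorbs $1+\log_49$, as you yourself note.
\item The trial limit is $\lceil\widetilde{\mcC}_{\mcA}2^{q/2}\rceil\le(\widetilde{\mcC}_{\mcA}+1)2^{q/2}$ with no factor $\rho$, since $\rho$ cancels in $U_t/\sqrt{\alpha_t\nu_t}$.
\end{itemize}
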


\begin{remark}
\begin{enumerate}[label=(\roman*)]
\item A successful trial verifies residual reduction, not the validity of its global parameter estimates. The parameter search contributes the first term in~\eqref{eq:pf-complexity}, which is additive to the accuracy term.
\item The logarithmic term in~\eqref{eq:pf-complexity} satisfies
\[
\left\lceil\log_2\!\left(1+\frac{4L_{\nabla f}^2\norm{u^0-u^*}}{\mu_x\epsilon}\right)\right\rceil
\le3+2\log_2\frac{L_{\nabla f}}{\sqrt{\mu_x\mu_y}}+\log_2\!\left(1+\frac{\mu_y\norm{u^0-u^*}}\epsilon\right).
\]
Since $L_{\nabla f}/\sqrt{\mu_x\mu_y}\ge1$, the constant and the additional logarithm of $L_{\nabla f}/\sqrt{\mu_x\mu_y}$ are absorbed into the squared logarithm in~\eqref{eq:pf-complexity}. This yields the stated $O(\cdot)$ expression, whose hidden constant depends only on $N_{\mcA}$ and $\mathcal C_{\mcA}$.
\end{enumerate}
\end{remark}
\section{First-order method for nonconvex--concave minimax optimization}\label{sec:algorithms}

\subsection{A proximal point method for NCC problems without knowledge of $L_{\nabla L}$}\label{sec:ncc}

We first develop a method for~\eqref{eq:ncc} that uses a supplied radius $D\ge1$ and calls Algorithm~\ref{alg:scsc} without knowing $L_{\nabla h}$. By Assumption~\ref{ass:ncc}(ii), $\norm{y-y^0}\le2D_{\mathbf y}$ for every $y\in\mathcal Y$. Thus, with $D=2D_{\mathbf y}$, the method finds an $\epsilon$-stationary point of~\eqref{eq:ncc}. We impose the additional execution requirements on $\mcA$ stated in Subsection~\ref{sec:pf-scsc}, including when the supplied subproblems are not SCSC.

Each call to Algorithm~\ref{alg:scsc} is called a trial. The dual regularization parameter $\mu$ remains fixed. We maintain a primal center $x^t$, an initial pair $(\hat x,\hat y)$ for the inner call, and a residual $r^t$ of the subproblem~\eqref{eq:h-k} at that pair. After an accepted trial, the returned primal point becomes the next center and the returned pair initializes the next call. A rejected trial retains the center and initial pair and doubles $\beta_t$. A trial is called accepted if its call returns, passes the radius test in step~5, fails the stopping test in step~6, and satisfies~\eqref{eq:ncc-model-test}. The radius test returns failure when the supplied radius is exceeded and Algorithm~\ref{alg:pf-ncc} uses this outcome to select the radius.

\begin{algorithm}[H]
\caption{A proximal point method for nonconvex--concave minimax optimization without knowledge of $L_{\nabla h}$}
\label{alg:ncc}
\begin{algorithmic}[1]
\REQUIRE Subroutine $\mcA$ satisfying Assumption~\ref{ass:subsolver}, $(x^0,y^0)\in\mathcal X\times\mathcal Y$, $D\ge1$, and $\epsilon>0$.
\STATE Set $\beta_1=\epsilon/32$, $\mu=\epsilon/(32D)$, and $x^1=x^0$.
\STATE Compute 
\begin{align*}
&(\hat x,\hat y)=(I+\beta_1^{-1}B)^{-1}\bigl((x^0,y^0)-\beta_1^{-1}A(x^0,y^0)\bigr)\\
&r^1=A(\hat x,\hat y)-A(x^0,y^0)+\bigl(\beta_1(\hat x-x^0),(\mu-\beta_1)(\hat y-y^0)\bigr).
\end{align*}
\FOR{$t=1,2,\ldots$}
\STATE Set $\tau_t=(\epsilon/64)\sqrt{\mu/\beta_t}$. Call Algorithm~\ref{alg:scsc} with $(L_{\nabla f},\mu_x,\mu_y,u^0,\epsilon,\mcA)\leftarrow(3\beta_t,\beta_t,\mu,(\hat x,\hat y),\tau_t,\mcA)$ for solving $\min_x\max_y\{h_t(x,y)+p(x)-q(y)\}$, where
\begin{equation}\label{eq:h-k}
h_t(x,y)=h(x,y)+\beta_t\norm{x-x^t}^2-\frac{\mu}{2}\norm{y-y^0}^2,
\end{equation}
allowing at most
\begin{equation}\label{eq:ncc-trial-budget}
\left\lceil\widetilde{\mcC}_{\mcA}\sqrt{\frac{\beta_t}{\mu}}\left(1+\log_4\!\left(1+\frac{64}{\epsilon}\sqrt{\norm{r_x^t}^2+\frac{\beta_t}{\mu}\norm{r_y^t}^2}\right)\right)\right\rceil
\end{equation}
evaluations of $\nabla h$, $\prox_p$, and $\prox_q$, respectively, where $\widetilde{\mcC}_{\mcA}$ is defined in step~6 of Algorithm~\ref{alg:pf-scsc}. Stop the call before any evaluation exceeding its corresponding limit. If the call returns, denote its output by $((x^{t+1},y^{t+1}),e^{t+1})$.
\STATE If the call returns a pair satisfying $\norm{y^{t+1}-y^0}>D$, terminate and return failure.
\STATE If the call returns a pair satisfying $\norm{x^{t+1}-x^t}\le\epsilon/(4\beta_t)$, terminate and output $(x^{t+1},y^{t+1})$.
\STATE If the call returns a pair satisfying
\begin{equation}\label{eq:ncc-model-test}
h(x^t,y^{t+1})-h(x^{t+1},y^{t+1})+\ip{\nabla_x h(x^{t+1},y^{t+1})}{x^{t+1}-x^t}+\frac{\beta_t}{2}\norm{x^{t+1}-x^t}^2\ge0,
\end{equation}
set $(\hat x,\hat y)=(x^{t+1},y^{t+1})$, $r^{t+1}=e^{t+1}-\bigl(2\beta_t(x^{t+1}-x^t),0\bigr)$, and $\beta_{t+1}=\beta_t$.\newline Otherwise, set \mbox{$r^{t+1}=r^t+\bigl(2\beta_t(\hat x-x^t),0\bigr)$}, $x^{t+1}=x^t$, and $\beta_{t+1}=2\beta_t$.
\ENDFOR
\end{algorithmic}
\end{algorithm}

The following theorem establishes the evaluation complexity with the radius $D=2D_{\mathbf y}$, whose proof is deferred to Subsection \ref{subsec:proof-algorithm4}.
\begin{theorem}\label{thm:ncc}
Suppose Assumption~\ref{ass:ncc} holds, and $\mcA$ satisfies Assumption~\ref{ass:subsolver} and the additional execution requirements in Subsection~\ref{sec:pf-scsc}. Let $0<\epsilon<32L_{\nabla h}$. Then Algorithm~\ref{alg:ncc} with $D=2D_{\mathbf y}$ is well-defined and terminates with an $\epsilon$-stationary point $(x,y)$ of~\eqref{eq:ncc}. The numbers of evaluations of $h$, $\nabla h$, $\prox_p$, and $\prox_q$, including all unsuccessful trials, are bounded, respectively, by
\begin{equation}\label{eq:ncc-cost}
\begin{aligned}[b]
&2+2^{13}(1+\sqrt2)\widetilde{\mcC}_{\mcA}\sqrt{\frac{L_{\nabla h}D_{\mathbf y}}\epsilon}
\left(1+\frac{L_{\nabla h}\Delta_0}{\epsilon^2}+\frac{L_{\nabla h}D_{\mathbf y}}\epsilon\right)\\
&=O\!\left(1+\sqrt{\frac{L_{\nabla h}D_{\mathbf y}}{\epsilon}}\left(1+\frac{L_{\nabla h}\Delta_0}{\epsilon^2}+\frac{L_{\nabla h}D_{\mathbf y}}{\epsilon}\right)\right),
\end{aligned}
\end{equation}
where $\Delta_0=\max_y H(x^0,y)-H^*$, and $D_{\mathbf y}$ and $L_{\nabla h}$ are given in Assumption~\ref{ass:ncc}(ii) and (iii), respectively.
\end{theorem}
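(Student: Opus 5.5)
The proof runs in four steps. First, for any trial whose $\beta_t\ge L_{\nabla h}$, we show that the call to Algorithm~\ref{alg:scsc} is a valid SCSC call and returns within its budget. Second, the model test then yields sufficient decrease. Third, step~5 never fires and step~6 certifies $\epsilon$-stationarity. Fourth, a telescoping argument bounds the number of accepted and rejected trials and sums their budgets.

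\textbf{Step 1 (valid trials).} When $\beta_t\ge L_{\nabla h}$, the coupling $h_t$ in~\eqref{eq:h-k} is $\beta_t$-strongly convex in $x$, since $h(\cdot,y)$ is $L_{\nabla h}$-weakly convex and we add $2\beta_t\cdot\frac12\norm{\cdot}^2$. It is $\mu$-strongly concave in $y$, and it is $(L_{\nabla h}+2\beta_t)\le3\beta_t$-smooth. Because $\mu=\epsilon/(32D)\le\epsilon/32=\beta_1\le\beta_t$, Assumption~\ref{ass:scsc} holds with $(3\beta_t,\beta_t,\mu)$. Theorem~\ref{thm:scsc} then bounds the cost by
\[
2^{11}(N_{\mcA}+256\mathcal C_{\mcA}+3)\cdot 3\sqrt{\beta_t/\mu}\,\bigl(1+\log_4(1+\mu\norm{(\hat x,\hat y)-u_t^*}/\tau_t)\bigr).
\]
To bound $\norm{(\hat x,\hat y)-u_t^*}$ we use strong monotonicity of the subproblem operator in the weighted form: $\beta_t\norm{\delta_x}^2+\mu\norm{\delta_y}^2\le\ip{r^t}{\delta}$. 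Here $r^t$ is the maintained residual of subproblem $t$ at $(\hat x,\hat y)$. After a rejection this stays true because the shift $r^t+(2\beta_t(\hat x-x^t),0)$ exactly accounts for the changed quadratic, since $x^{t+1}=x^t$ and $\beta$ doubles. This gives
\[
\mu\norm{\delta}\le\sqrt{\mu}\sqrt{\norm{r^t_x}^2\mu/\beta_t+\norm{r^t_y}^2}\le\sqrt{\mu/\beta_t}\sqrt{\norm{r_x^t}^2+(\beta_t/\mu)\norm{r_y^t}^2}.
\]
Dividing by $\tau_t=(\epsilon/64)\sqrt{\mu/\beta_t}$ matches the logarithm in~\eqref{eq:ncc-trial-budget}, and $\widetilde{\mcC}_{\mcA}=4\cdot2^{11}(\cdot)$ covers the factor $3$. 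So the call returns an output with residual norm at most $\tau_t\le\epsilon/64$. For $\beta_t\ge L_{\nabla h}$ the model test~\eqref{eq:ncc-model-test} holds automatically by the descent lemma. Hence $\beta_t\le 2L_{\nabla h}$ always, given $\epsilon<32L_{\nabla h}$, and the number of rejections is at most $\log_2(64L_{\nabla h}/\epsilon)$.

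\textbf{Step 2 (decrease per accepted trial).} For an accepted trial, let $\Psi(x)=\max_y\{H(x,y)-\frac\mu2\norm{y-y^0}^2\}$. Using~\eqref{eq:ncc-model-test}, which is a concavity-free substitute for weak convexity along the actual step, together with near-optimality of $(x^{t+1},y^{t+1})$ for the strongly concave subproblem, we aim to show
\[
\Psi(x^{t+1})\le\Psi(x^t)-c\beta_t\norm{x^{t+1}-x^t}^2+O(\tau_t\cdot\text{dist})
\]
for an absolute constant $c$. Since step~6 did not fire, $\norm{x^{t+1}-x^t}>\epsilon/(4\beta_t)$, so the decrease is at least of order $\epsilon^2/\beta_t\gtrsim\epsilon^2/L_{\nabla h}$. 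The error term from the $\tau_t$-residual must be absorbed. I expect this to be the main obstacle: we need the inexact output to behave like an exact proximal step in both $x$ and $y$. I would handle it by testing the subproblem's strong convexity--concavity at $(x^t,\cdot)$ and at the dual maximizer, then using $\tau_t\le(\epsilon/64)$ against the step length. Since $\Psi$ differs from $\max_yH$ by at most $\mu(2D_{\mathbf y})^2/2\le\epsilon D_{\mathbf y}/16$, the number of accepted trials is
\[
O\bigl(1+L_{\nabla h}\Delta_0/\epsilon^2+L_{\nabla h}D_{\mathbf y}/\epsilon\bigr).
\]

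\textbf{Step 3 (radius test and stationarity).} For the radius test, the exact dual solution lies in $\mathcal Y$, so $\norm{y^{t+1}-y^0}\le2D_{\mathbf y}=D$ and step~5 never fires. At termination in step~6, the output satisfies $e\in(A+B)$ for $h_t$ with $\norm e\le\epsilon/64$. Converting to the original problem shifts the $x$-component by $2\beta_t\norm{x^{t+1}-x^t}\le\epsilon/2$ and the $y$-component by $\mu\norm{y-y^0}\le\epsilon/32$. Both components are therefore at most $\epsilon$.

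\textbf{Step 4 (total cost).} Each trial's budget is $O(\widetilde{\mcC}_{\mcA}\sqrt{L_{\nabla h}D/\epsilon})$ times the logarithm. We show the logarithm is $O(1)$ by bounding $r^t$. After an acceptance, $r^{t+1}$ has norm at most $\tau_t+2\beta_t\norm{x^{t+1}-x^t}$, and $\norm{x^{t+1}-x^t}$ is bounded via the decrease. After a rejection, the shift grows $\norm{r_x}$ by at most a constant factor per doubling. In either case the log argument is polylogarithmic and is absorbed into the constant $2^{13}(1+\sqrt2)$; the factor $1+\sqrt2$ comes from summing $\sqrt{\beta_t}$ over the geometric sequence of rejections. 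Adding the two initial evaluations in step~2 and multiplying by the trial count gives~\eqref{eq:ncc-cost}.
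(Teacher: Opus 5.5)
Steps~1 and~3 are essentially right, and Step~2 can be completed (see below). Step~4, however, has a genuine gap, and it sits exactly where the theorem's content lies.

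\textbf{The logarithm is not $O(1)$.} You claim the logarithm in~\eqref{eq:ncc-trial-budget} is $O(1)$ and is absorbed into the absolute constant $2^{13}(1+\sqrt2)$. This is false. Suppose trial $s-1$ is accepted with parameter $\beta_t$. Then $r^s=e^s-\bigl(2\beta_t(x^s-x^{s-1}),0\bigr)$, and the argument of the logarithm is at most $2+128v_s$, where $v_s=\beta_t\norm{x^s-x^{s-1}}/\epsilon>1/4$. The descent only gives $v_s\le\sqrt{2L_{\nabla h}(\Delta_0+\epsilon D_{\mathbf y}/8)}/\epsilon$, which is of order $\epsilon^{-1}$; a long step from a point with a large gradient mapping realizes this. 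A uniform per-trial bound therefore costs a factor $\log(1/\epsilon)$ per trial. Multiplied by the $O(L_{\nabla h}\Delta_0/\epsilon^2+L_{\nabla h}D_{\mathbf y}/\epsilon)$ accepted trials, it reproduces precisely the $O(\epsilon^{-5/2}\log(1/\epsilon))$ bound the theorem is meant to remove.

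The missing idea is to amortize the logarithm against the descent. Since $1+\log_4(2+128v)\le64v^2$ for $v\ge1/4$, such a trial costs at most $128\widetilde{\mcC}_{\mcA}\sqrt{\beta_t/\mu}\,\beta_t^2\norm{x^s-x^{s-1}}^2/\epsilon^2$. Summing with $\sum_t\beta_t\norm{x^{t+1}-x^t}^2\le\Delta_0+\epsilon D_{\mathbf y}/8$ and $\beta_t<2L_{\nabla h}$ gives a log-free bound for each group of trials with fixed $\beta$. A long step pays for its larger logarithm with a proportionally larger decrease. Only the first trial of each $\beta$-group may keep a genuine logarithm. There are $O(\log(L_{\nabla h}/\epsilon))$ such trials, weighted geometrically by $\sqrt{\beta_t}$, so their logarithm can then be linearized via $\log(1+v)\le v$.

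\textbf{Bounding $\norm{\hat x-x^0}$ for trials before any acceptance.} Here
$$r^t=A(\hat x,\hat y)-A(x^0,y^0)+\bigl((2\beta_t-\beta_1)(\hat x-x^0),(\mu-\beta_1)(\hat y-y^0)\bigr),$$
so you must bound $\norm{\hat x-x^0}$ by problem data, even though $\mathcal X$ may be unbounded. Saying the residual ``grows by a constant factor per doubling'' does not do this. The paper evaluates $H$ at $x^0+(\beta_1/L_{\nabla h})(\hat x-x^0)$ to obtain
$$\frac{\beta_1^2}{2L_{\nabla h}}\norm{\hat x-x^0}^2-2\beta_1D_{\mathbf y}\norm{\hat x-x^0}\le\Delta_0.$$

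\textbf{Completing Step~2.} Step~2 is easier than you expect, but your error term $O(\tau_t\cdot\text{dist})$ misses the relevant dual term. By $\mu$-strong concavity with subgradient $e_y^{t+1}$,
$$\max_yH_\mu(x^{t+1},y)\le H_\mu(x^{t+1},y^{t+1})+\norm{e_y^{t+1}}^2/(2\mu).$$
Moreover $\tau_t^2/(2\mu)=\epsilon^2/(8192\beta_t)$, which is exactly why $\tau_t$ carries the factor $\sqrt{\mu/\beta_t}$. Combining this with convexity of $p$ and~\eqref{eq:ncc-model-test}, the decrease is at least
$$\tfrac32\beta_t\norm{x^{t+1}-x^t}^2-\tau_t\norm{x^{t+1}-x^t}-\tau_t^2/(2\mu)\ge\beta_t\norm{x^{t+1}-x^t}^2,$$
using $\norm{x^{t+1}-x^t}>\epsilon/(4\beta_t)$.
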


\begin{remark}
The first primal center is $x^0$, so $\Delta_0$ is the original objective gap. The pair $(\hat x,\hat y)$ is used only as the initial point for the inner calls and is updated after each accepted trial. Algorithm~\ref{alg:ncc} uses the radius bound $D_{\mathbf y}$, but does not require $L_{\nabla h}$ or $\Delta_0$. Its radius test cannot return failure when $D=2D_{\mathbf y}$.
\end{remark}

\subsection{A parameter-free proximal point method for SCSC problems}\label{sec:pf-ncc}

We now remove the need for $D_{\mathbf y}$ by calling Algorithm~\ref{alg:ncc} with successively doubled radius estimates. Each call starts from the original point $(x^0,y^0)$ and initializes its own smoothness estimate. A phase is one complete call to Algorithm~\ref{alg:ncc}: its supplied radius remains fixed, and failure starts the next phase with twice that radius.

\begin{algorithm}[H]
\caption{A parameter-free proximal point method for nonconvex--concave minimax optimization}
\label{alg:pf-ncc}
\begin{algorithmic}[1]
\REQUIRE Subroutine $\mcA$ satisfying Assumption~\ref{ass:subsolver}, $(x^0,y^0)\in\mathcal X\times\mathcal Y$, and $\epsilon>0$.
\FOR{$k=1,2,\ldots$}
\STATE Set $D_k=2^{k-1}$.
\STATE Call Algorithm~\ref{alg:ncc} with $(D,x^0,y^0,\epsilon,\mcA)\leftarrow(D_k,x^0,y^0,\epsilon,\mcA)$ for solving problem~\eqref{eq:ncc}.
\STATE If the call returns a point $(x,y)$, terminate and output $(x,y)$.
\ENDFOR
\end{algorithmic}
\end{algorithm}

The following theorem presents our results on the above algorithm, whose proof is deferred to Subsection \ref{subsec:proof-algorithm4}.
\begin{theorem}\label{thm:pf-ncc}
Suppose Assumption~\ref{ass:ncc} holds, and $\mcA$ satisfies Assumption~\ref{ass:subsolver} and the additional execution requirements in Subsection~\ref{sec:pf-scsc}. Let $0<\epsilon<32L_{\nabla h}$. Then Algorithm~\ref{alg:pf-ncc} is well-defined and terminates with an $\epsilon$-stationary point $(x,y)$ of~\eqref{eq:ncc}. The numbers of evaluations of $h$, $\nabla h$, $\prox_p$, and $\prox_q$, including all calls to Algorithm~\ref{alg:ncc}, are bounded, respectively, by
\begin{equation}\label{eq:pf-ncc-cost}
O\!\left(1+\sqrt{\frac{L_{\nabla h}D_{\mathbf y}}{\epsilon}}\left(1+\frac{L_{\nabla h}\Delta_0}{\epsilon^2}+\frac{L_{\nabla h}D_{\mathbf y}}{\epsilon}\right)\right),
\end{equation}
where $\Delta_0$ is defined in Theorem~\ref{thm:ncc}, and $D_{\mathbf y}$ and $L_{\nabla h}$ are given in Assumption~\ref{ass:ncc}(ii) and (iii), respectively.
\end{theorem}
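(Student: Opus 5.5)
We would prove Theorem~\ref{thm:pf-ncc} by running Algorithm~\ref{alg:pf-ncc} through its phases and invoking Theorem~\ref{thm:ncc}, together with the phase-level facts established in its proof, for each call to Algorithm~\ref{alg:ncc}. The argument has three parts: correctness of any returned point, termination within a bounded number of phases, and a geometric summation of the phase costs.

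\emph{Correctness and termination.} Failure is returned only by the radius test in step~5, so any returned point is produced by step~6. We would extract from the proof of Theorem~\ref{thm:ncc} the statement that the $\epsilon$-stationarity of an output from step~6 does not depend on whether $D\ge2D_{\mathbf y}$. The only place $D$ enters that argument should be through $\mu=\epsilon/(32D)$ and the bound $\norm{y^{t+1}-y^0}\le D$, and the latter is guaranteed by the passed radius test. Hence the dual perturbation satisfies $\mu\norm{y^{t+1}-y^0}\le\epsilon/32$, and the inner residual and the primal step bound $\norm{x^{t+1}-x^t}\le\epsilon/(4\beta_t)$ give the two conditions of Definition~\ref{def:epsilon-stationary}. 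For termination, let $K$ be the first index with $D_K=2^{K-1}\ge2D_{\mathbf y}$. By Assumption~\ref{ass:ncc}(ii), $\norm{y-y^0}\le2D_{\mathbf y}\le D_K$ for all $y\in\mathcal Y$, so the radius test cannot fail. Moreover, Theorem~\ref{thm:ncc}, whose proof we expect to need only $D\ge2D_{\mathbf y}$ (monotonicity in $D$), implies that phase $K$ terminates with an output. Since $D_{\mathbf y}\ge1$, we have $D_K\le4D_{\mathbf y}$ and $K=O(1+\log D_{\mathbf y})$.

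\emph{Cost of each phase.} The main obstacle is bounding the cost of a phase with $D_k<2D_{\mathbf y}$. In such a phase the run may stop with failure, and Theorem~\ref{thm:ncc} does not apply. We would establish a phase bound of the form
\[
O\!\left(1+\sqrt{\frac{L_{\nabla h}D_k}{\epsilon}}\left(1+\frac{L_{\nabla h}\Delta_0}{\epsilon^2}+\frac{L_{\nabla h}D_k}{\epsilon}\right)\right)
\]
for every $D_k$, by rerunning the proof of Theorem~\ref{thm:ncc} with $D_{\mathbf y}$ replaced by $D_k$ and with the stopping time being either step~5 or step~6. The key points are as follows.
\begin{enumerate}
\item The number of rejections is bounded because $\beta_t$ is doubled only while $\beta_t<L_{\nabla h}$. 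Indeed, the model test~\eqref{eq:ncc-model-test} holds automatically once $\beta_t\ge L_{\nabla h}$, since $h(\cdot,y)$ is $L_{\nabla h}$-weakly convex. Hence $\beta_t\le2L_{\nabla h}$ throughout.
\item The number of accepted trials is controlled by descent of the potential $\max_y\{H(x^t,y)-\tfrac\mu2\norm{y-y^0}^2\}$. Each accepted step decreases it by $\Omega(\epsilon^2/\beta_t)$, and its total range is at most $\Delta_0+\mu D_k^2$. This range bound uses only iterates satisfying $\norm{y-y^0}\le D_k$, which is exactly the region where the radius test has passed; the final trial, which fails, is charged at unit count.
\item Each trial is bounded by the budget~\eqref{eq:ncc-trial-budget}, which is of order $\sqrt{\beta_t/\mu}=O(\sqrt{L_{\nabla h}D_k/\epsilon})$ times a logarithmic factor. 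This factor telescopes exactly as in Theorem~\ref{thm:ncc}.
\end{enumerate}
If the residual bookkeeping in the proof of Theorem~\ref{thm:ncc} were to use $D\ge2D_{\mathbf y}$ at some further point, we would instead restrict that step to the iterates visited before failure, all of which satisfy the radius test.

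\emph{Summation.} With the phase bound in hand, we sum over $k=1,\dots,K$. Because $D_k=2^{k-1}$, each term
\[
\sqrt{D_k}\,\bigl(1+L_{\nabla h}\Delta_0/\epsilon^2\bigr)+D_k^{3/2}L_{\nabla h}/\epsilon
\]
grows geometrically in $k$, so the sum is within a constant factor of its last term. Using $D_K\le4D_{\mathbf y}$, the last term is bounded by a constant multiple of the expression in~\eqref{eq:pf-ncc-cost}. The additive $O(1)$ per phase, coming from the initialization in step~2 of Algorithm~\ref{alg:ncc}, contributes $O(K)=O(\log D_{\mathbf y})$. This is absorbed into the main term, since $\sqrt{L_{\nabla h}D_{\mathbf y}/\epsilon}\ge\sqrt{D_{\mathbf y}/32}$ when $\epsilon<32L_{\nabla h}$.
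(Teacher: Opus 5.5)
Your skeleton is the paper's: failure in a phase forces $D_k<2D_{\mathbf y}$, so the last phase has $D_K\le4D_{\mathbf y}$, and the phase costs are summed geometrically. The step that fails is your per-phase estimate obtained by ``replacing $D_{\mathbf y}$ by $D_k$.'' In particular, you claim that the potential $\max_y H_\mu(x^t,y)$ has total range at most $\Delta_0+\mu D_k^2$. To lower-bound $\max_y H_\mu(x^{a+1},y)$ by $H^*$ minus an error, you must compare it with $\max_y H(x^{a+1},y)$. That maximum runs over all of $\mathcal Y$, not over the ball of radius $D_k$ that the radius test controls. The only handle is approximate dual optimality of $y^{a+1}$, which gives an error $\norm{\mu(y^{a+1}-y^0)-e_y^{a+1}}\sup_{y\in\mathcal Y}\norm{y-y^{a+1}}$. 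The first factor can be as large as $\mu D_k+\tau_t\le3\epsilon/64$, and the second as large as $2D_{\mathbf y}$.

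This error of order $\epsilon D_{\mathbf y}$ is real. Suppose $\mathcal Y$ is a segment of length $2D_{\mathbf y}$ with endpoint $y^0$, and $H(x,\cdot)$ is affine on it with slope $\mu D_k/2$ pointing away from $y^0$. Then the regularized maximizer sits at distance $D_k/2$ from $y^0$, well inside the radius. Yet $\max_y H(x,y)-\max_y H_\mu(x,y)=\mu D_kD_{\mathbf y}-\mu D_k^2/8=\epsilon D_{\mathbf y}/32-\epsilon D_k/256$, which is much larger than $\mu D_k^2$ when $D_k\ll D_{\mathbf y}$. This is why Lemma~\ref{lem:ncc-phase} carries $\Delta_0+\epsilon D_{\mathbf y}/8$ for every admissible $D$.

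The logarithmic factor has the same defect. The pair $(\hat x,\hat y)$ from step~2 may have $\hat y$ anywhere in $\mathcal Y$. The bound on $\norm{\hat x-x^0}$ also involves $\nabla_x h(x^0,y)-\nabla_x h(x^0,y^0)$ over all $y\in\mathcal Y$, so $D_{\mathbf y}$ enters there too. Your fallback of restricting to iterates that passed the radius test addresses neither point, since neither concerns the iterates.

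The repair is to use the run bound~\eqref{eq:ncc-run-cost}, which the paper proves for every $1\le D\le4D_{\mathbf y}$. Lemma~\ref{lem:ncc-run-cost} supplies termination, the failure criterion, correctness of any output, and $\mu\ge\epsilon/(128D_{\mathbf y})$. This lemma, rather than ``monotonicity in $D$,'' is also what covers the last phase with $D_K\in[2D_{\mathbf y},4D_{\mathbf y})$. In that bound only the prefactor $\sqrt{L_{\nabla h}D/\epsilon}$ depends on $D$, while the parenthesized factor keeps $D_{\mathbf y}$. Every reached phase has $D_k\le4D_{\mathbf y}$, so summing with $\sum_{k\le K}\sqrt{D_k}\le(2+\sqrt2)\sqrt{4D_{\mathbf y}}$ gives~\eqref{eq:pf-ncc-cost}. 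The per-phase additive constants are absorbed as you describe. Your final bound is therefore correct, but the intermediate phase estimate with $D_k$ inside the parentheses cannot be proved and should be replaced by this one.
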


\begin{remark}
It can be observer from the above theorem that ALgorithm \ref{alg:pf-ncc} enjoys a $O(\epsilon^{-5/2})$ operation complexity for finding an $\epsilon$-stationary point $(x,y)$ of~\eqref{eq:ncc}. To the best of our knowledge, this improves the best known complexity be a logarithmic factor for the first time. Moreover, our method is parameter-free and equipped with a verifiable termination criterion.
\end{remark}

\section{Proof of the main results}
In this section we provide a proof of our main results presented in Sections \ref{sec:algorithm1} and \ref{sec:algorithms}, which are particularly Theorems \ref{thm:contract}--\ref{thm:pf-ncc}.
\subsection{Proof of the main result in Subsection~\ref{sec:contract}}
\label{subsec:proof-algorithm1}

In this subsection we first establish several lemmas and then prove Theorem~\ref{thm:contract}. All iteration indices in this subsection refer to iterations reached by Algorithm~\ref{alg:contract}. Throughout this subsection, $u^*$ denotes the saddle point of~\eqref{eq:scsc}, and $u_*^k$ denotes the solution of the balanced subproblem at $c^k$.

The following lemma establishes that the call to the subsolver terminates in finitely many evaluations.

\begin{lemma}\label{lem:balanced-cost}
Suppose Assumptions~\ref{ass:scsc} and~\ref{ass:subsolver} hold. For every iteration $k$ reached by Algorithm~\ref{alg:contract}, its subsolver initial point $\widehat c^k$ belongs to $\mathcal X\times\mathcal Y$, and its call in step~2 terminates and outputs a pair satisfying the termination criteria~\eqref{eq:relative} within
\[
1+N_{\mcA}+\left\lceil\frac{128\mathcal C_{\mcA}(L_{\nabla f}+\mu_x)}{\mu_x}\right\rceil
\]
evaluations of $\nabla f$, $\prox_p$, and $\prox_q$, respectively.
\end{lemma}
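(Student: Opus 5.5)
Membership $\widehat c^k\in\mathcal X\times\mathcal Y$ is immediate: $\widehat c^k$ is the output of the resolvent $(I+\gamma_kB)^{-1}=(\prox_{\gamma_k p},\prox_{\gamma_k q})$, whose range lies in $\dom\partial p\times\dom\partial q\subseteq\mathcal X\times\mathcal Y$. The evaluation of $F_k(u^0)$ costs one $\nabla f$ evaluation at $u^0\in\mathcal X\times\mathcal Y$; that $u^0$ lies in the domain is part of the input requirement. This step accounts for the leading ``$1+$'' in the bound.

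Next I would check that the balanced inclusion $0\in(F_k+B)(u)$ meets the hypotheses of Assumption~\ref{ass:subsolver} with parameters $(L_{\nabla f}+\mu_x,\mu_x/2)$. The map $F_k$ equals $F$ plus the linear map $u\mapsto(-\tfrac{\mu_x}{2}(x-c_x^k),\mu_x(y-c_y^k))$, so $F_k$ is $(L_{\nabla f}+\mu_x)$-Lipschitz. For monotonicity, $f(x,y)-\tfrac{\mu_x}{4}\|x-c_x^k\|^2$ is $\mu_x/2$-strongly convex in $x$, and $f(x,y)-\tfrac{\mu_x}{2}\|y-c_y^k\|^2$ is $(\mu_y+\mu_x)\ge\mu_x/2$-strongly concave in $y$. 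The standard argument for convex--concave saddle functions then gives that $F_k$ is $\mu_x/2$-strongly monotone; hence $F_k$ is monotone and $F_k+B$ is $\mu_x/2$-strongly monotone, with $\mu_x/2\le L_{\nabla f}+\mu_x$. Strong monotonicity, maximality of $F_k+B$ (a continuous monotone map plus a maximal monotone operator), and coercivity together give a unique solution $u_*^k$.

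The main step is choosing $\vartheta$ so that the guaranteed residual implies \eqref{eq:relative}. Assumption~\ref{ass:subsolver} yields $(u^{k+1},r^{k+1})$ with $r^{k+1}\in(F_k+B)(u^{k+1})$ and $\|r^{k+1}\|\le\vartheta\|\widehat c^k-u_*^k\|$, so I need to bound $\|\widehat c^k-u_*^k\|$ by the right-hand side of \eqref{eq:relative}. By the triangle inequality,
\[
\|\widehat c^k-u_*^k\|\le\|\widehat c^k-c^k\|+\|c^k-u^{k+1}\|+\|u^{k+1}-u_*^k\|.
\]
Strong monotonicity gives $\|u^{k+1}-u_*^k\|\le(2/\mu_x)\|r^{k+1}\|$. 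For the first term, nonexpansiveness of the resolvent and $c^k-\gamma_kF_k(u^0)\in c^k-\gamma_k(F_k+B)(u^0)$ (after accounting for $B$-subgradients) need care. The cleaner route is to compare with the point $\bar c=(I+\gamma_kB)^{-1}(c^k-\gamma_kF_k(c^k))$. The $\gamma_kF_k(u^0)$ term differs from $\gamma_kF_k(c^k)$ by at most $\gamma_k(L_{\nabla f}+\mu_x)\|c^k-u^0\|=\|c^k-u^0\|/(k+2)^2$, which is exactly the absolute allowance in \eqref{eq:relative}. The resolvent displacement $\|\bar c-c^k\|$ is in turn controlled by $\gamma_k$ times a subgradient norm. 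I would instead bound it through $\|c^k-u_*^k\|$, using the fact that forward--backward with a small step is a contraction-type map fixing $u_*^k$. This gives $\|\widehat c^k-u_*^k\|\le 2\|c^k-u_*^k\|+\|c^k-u^0\|/(k+2)^2$, and $\|c^k-u_*^k\|\le\|c^k-u^{k+1}\|+(2/\mu_x)\|r^{k+1}\|$.

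Collecting terms gives $\|r^{k+1}\|\le\vartheta\bigl(C_1\|u^{k+1}-c^k\|+\|c^k-u^0\|/(k+2)^2+(C_2/\mu_x)\|r^{k+1}\|\bigr)$. Taking $\vartheta=\mu_x/128$ (up to a constant factor), the $r^{k+1}$ term is absorbed into the left-hand side and \eqref{eq:relative} follows. Assumption~\ref{ass:subsolver} then bounds the cost by $N_{\mcA}+\lceil128\mathcal C_{\mcA}(L_{\nabla f}+\mu_x)/\mu_x\rceil$. Since the subsolver's computed residual is tested at every $\vartheta$ along one run, the call stops no later than this count. I expect the constant bookkeeping in the bound on $\|\widehat c^k-u_*^k\|$ to be the main obstacle, specifically ensuring that the factor $1/64$ versus $1/128$ works out.
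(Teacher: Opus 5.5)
Your plan has the same skeleton as the paper's proof: bound $\|\widehat c^k-u_*^k\|$ by nonexpansiveness of the resolvent, replace $\|c^k-u_*^k\|$ by $\|u-c^k\|+2\|r\|/\mu_x$ using $\mu_x/2$-strong monotonicity, and absorb the $\|r\|$ term with $\vartheta=\mu_x/128$. The step through the auxiliary point $\bar c=(I+\gamma_kB)^{-1}(c^k-\gamma_kF_k(c^k))$ does not work under the stated assumptions.

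The center $c^k$ is an extrapolated, residual-corrected point, and in general it lies outside $\mathcal X\times\mathcal Y$. However, $\nabla f$, and hence $F_k$, is only assumed to exist and be Lipschitz on $\mathcal X\times\mathcal Y$. So neither $F_k(c^k)$ nor the estimate $\|F_k(u^0)-F_k(c^k)\|\le(L_{\nabla f}+\mu_x)\|u^0-c^k\|$ is available. This is exactly why the algorithm evaluates $F_k$ at $u^0$ rather than at $c^k$. The repair is to compare with the fixed point directly: $u_*^k=(I+\gamma_kB)^{-1}(u_*^k-\gamma_kF_k(u_*^k))$. Nonexpansiveness, together with Lipschitz continuity between $u^0,u_*^k\in\mathcal X\times\mathcal Y$ and $\|u^0-u_*^k\|\le\|c^k-u_*^k\|+\|c^k-u^0\|$, then gives
\[
\|\widehat c^k-u_*^k\|\le\|c^k-u_*^k\|+\frac{\|u^0-u_*^k\|}{(k+2)^2}\le\Bigl(1+\frac{1}{(k+2)^2}\Bigr)\|c^k-u_*^k\|+\frac{\|c^k-u^0\|}{(k+2)^2}.
\]

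The constant is not just bookkeeping here. With your factor $2$ in front of $\|c^k-u_*^k\|$, the choice $\vartheta=\mu_x/128$ yields only
\[
\|r\|\le\frac{\mu_x}{62}\|u-c^k\|+\frac{\mu_x}{124(k+2)^2}\|c^k-u^0\|,
\]
which misses the $\mu_x/64$ in \eqref{eq:relative}. To close it you would need $\vartheta\le\mu_x/132$, and the evaluation count would then exceed the stated one. Using $1+(k+2)^{-2}\le5/4$ instead gives
\[
\|r\|\le\frac{5\mu_x}{502}\|u-c^k\|+\frac{2\mu_x}{251(k+2)^2}\|c^k-u^0\|,
\]
and since $5/502\le1/64$ and $2/251\le1/64$, this implies \eqref{eq:relative} with exactly the stated count.

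Finally, the leading $1$ is also needed for the one evaluation each of $\prox_p$ and $\prox_q$ that forms $\widehat c^k$, which your accounting does not mention. The paper obtains $F_k(u^0)$ from the already computed $F(u^0)$ by affine arithmetic, so for $\nabla f$ that $1$ is slack.
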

\begin{proof}
Fix any reached iteration $k$. The definition of $\widehat c^k$ in step~2 implies that $\widehat c^k\in\mathcal X\times\mathcal Y$. Thus $\widehat c^k$ is an valid initial point for $\mcA$. It follows from Assumption \ref{ass:scsc} that the balanced subproblem \eqref{eq:bld} has a unique saddle point $u_*^k$. By~\eqref{eq:F-k}, $F_k$ is $(L_{\nabla f}+\mu_x)$-Lipschitz and $\mu_x/2$-strongly monotone on $\mathcal X\times\mathcal Y$, so the parameters in step~2 are valid. Since $0\in(F_k+B)(u_*^k)$, we have $u_*^k=(I+\gamma_kB)^{-1}(u_*^k-\gamma_kF_k(u_*^k))$. By the definition of $\widehat c^k$ in step~2 and the nonexpansivity of the resolvent, one has
\begin{align*}
&\norm{\widehat c^k-u_*^k}\le\norm{c^k-u_*^k-\gamma_k(F_k(u^0)-F_k(u_*^k))}\\
&\le\norm{c^k-u_*^k}+\gamma_k(L_{\nabla f}+\mu_x)\norm{u^0-u_*^k}\\
&=\norm{c^k-u_*^k}+\frac{\norm{u^0-u_*^k}}{(k+2)^2}\le\left(1+\frac1{(k+2)^2}\right)\norm{c^k-u_*^k}+\frac{\norm{c^k-u^0}}{(k+2)^2},
\end{align*}
where the first inequality follows from the nonexpansivity of the resolvent, the second inequality follows from the Lipschitz continuity of $F_k$, and the equality follows from $\gamma_k(L_{\nabla f}+\mu_x)=(k+2)^{-2}$. Unless the call has already terminated, Assumption~\ref{ass:subsolver} with $\vartheta=\mu_x/128$ implies that the current call of $\mcA$ generates a pair $(u,r)$ satisfying $r\in(F_k+B)(u)$ and
\begin{equation}\label{eq:balanced-residual-bound}
\norm r\le\frac{\mu_x}{128}\norm{\widehat c^k-u_*^k}\le\frac{5\mu_x}{512}\norm{c^k-u_*^k}+\frac{\mu_x}{128(k+2)^2}\norm{c^k-u^0},
\end{equation}
where the last inequality follows from $(k+2)^{-2}\le1/4$. This pair is generated in at most $N_{\mcA}+\lceil128\mathcal C_{\mcA}(L_{\nabla f}+\mu_x)/\mu_x\rceil$ evaluations of $F_k$ and of the resolvent of $B$, respectively. By strong monotonicity and $0\in(F_k+B)(u_*^k)$, one has
\[
\frac{\mu_x}{2}\norm{u-u_*^k}^2\le\ip r{u-u_*^k}\le\norm r\norm{u-u_*^k},
\]
and hence $\norm{c^k-u_*^k}\le\norm{u-c^k}+\norm{u-u_*^k}\le\norm{u-c^k}+2\norm r/\mu_x$. Substituting this into~\eqref{eq:balanced-residual-bound}, we obtain
\[
\norm r\le\frac{5\mu_x}{512}\left(\norm{u-c^k}+\frac{2\norm r}{\mu_x}\right)+\frac{\mu_x}{128(k+2)^2}\norm{c^k-u^0}.
\]
Rearranging, we obtain
\begin{align*}
\norm r\le\frac{5\mu_x}{502}\norm{u-c^k}+\frac{2\mu_x}{251(k+2)^2}\norm{c^k-u^0}\le\frac{\mu_x}{64}\left(\norm{u-c^k}+\frac{\norm{c^k-u^0}}{(k+2)^2}\right).
\end{align*}
Hence, this pair satisfies~\eqref{eq:relative}. By~\eqref{eq:F-k},
\[
F_k(u^0)=F(u^0)+\left(-\frac{\mu_x}{2}(x^0-c_x^k),\mu_x(y^0-c_y^k)\right).
\]
Thus the initialization evaluates each of $\prox_p$ and $\prox_q$ once and computes $F_k(u^0)$ from the previously evaluated $F(u^0)$ by affine arithmetic. Each subsequent evaluation of $F_k$ requires one evaluation of $\nabla f$, and each resolvent evaluation requires one evaluation of each proximal mapping. The conclusion then follows from these.
\end{proof}

The next lemma establishes the inclusions and relative-error bound for $e^{k+1}$ and $\tilde u^{k+1}$ in~\eqref{eq:algorithm1-updates} and~\eqref{eq:algorithm1-corrected-point}, respectively.

\begin{lemma}\label{lem:certificate}
Suppose Assumptions~\ref{ass:scsc} and~\ref{ass:subsolver} hold. Let $F$ and $B$ be defined in~\eqref{eq:scsc-operators}, and $\Phi$ in~\eqref{eq:convex-reformulation}. For the sequence generated by Algorithm~\ref{alg:contract} from $u^0$, let $e^{k+1}$, $\tilde u^{k+1}$, and $c^{k+1}$ be defined by~\eqref{eq:algorithm1-updates}--\eqref{eq:algorithm1-center}. Then $e^{k+1}\in(F+B)(u^{k+1})$ and $(-e_x^{k+1},e_y^{k+1})\in\partial\Phi(\tilde u^{k+1})$ at every reached iteration $k$. Moreover, we have
\begin{equation}\label{eq:relative-proximal}
\left\|\mu_x(\tilde u^{k+1}-c^k)+(-e_x^{k+1},e_y^{k+1})\right\|\le\frac{\mu_x}{8}\left(\norm{\tilde u^{k+1}-c^k}+\frac{\norm{c^k-u^0}}{(k+2)^2}\right).
\end{equation}
\end{lemma}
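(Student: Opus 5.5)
The plan is to verify the two inclusions by direct computation from the definitions, and then obtain~\eqref{eq:relative-proximal} from the inner test~\eqref{eq:relative} by expressing everything in terms of $r^{k+1}$.

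\emph{Inclusion for $(F+B)$.} Since $r^{k+1}\in(F_k+B)(u^{k+1})$, we have $r^{k+1}-F_k(u^{k+1})\in B(u^{k+1})$. Adding $F(u^{k+1})$ gives $e^{k+1}\in(F+B)(u^{k+1})$ by~\eqref{eq:algorithm1-updates}. From~\eqref{eq:F-k} the components are explicit:
\[
e_x^{k+1}=r_x^{k+1}+\tfrac{\mu_x}{2}(x^{k+1}-c_x^k),\qquad e_y^{k+1}=r_y^{k+1}-\mu_x(y^{k+1}-c_y^k).
\]
The inclusion also gives $e_x^{k+1}\in\nabla_xf(u^{k+1})+\partial p(x^{k+1})$ and $e_y^{k+1}\in-\nabla_yf(u^{k+1})+\partial q(y^{k+1})$.

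\emph{Subgradient of $\Phi$.} I will use a minorant argument rather than a Danskin theorem. Write $\tilde\xi=x^{k+1}-e_x^{k+1}/\mu_x$, so that $\tilde u^{k+1}=(\tilde\xi,y^{k+1})$. Consider
\[
\psi(\xi,y)=q(y)+\tfrac{\mu_x}{2}\norm{x^{k+1}-\xi}^2-f(x^{k+1},y)-p(x^{k+1}).
\]
This function is convex in $(\xi,y)$ by the concavity of $f(x^{k+1},\cdot)$, and it satisfies $\psi\le\Phi$ everywhere.

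Next I check that $\psi(\tilde u^{k+1})=\Phi(\tilde u^{k+1})$. The map $x\mapsto\frac{\mu_x}{2}\norm{x-\tilde\xi}^2-f(x,y^{k+1})-p(x)$ is concave because $f$ is $\mu_x$-strongly convex in $x$. Its optimality condition $\mu_x(x-\tilde\xi)\in\nabla_xf(x,y^{k+1})+\partial p(x)$ holds at $x=x^{k+1}$, since $\mu_x(x^{k+1}-\tilde\xi)=e_x^{k+1}$. So $x^{k+1}$ attains the supremum and the two functions agree at $\tilde u^{k+1}$.

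Hence any subgradient of $\psi$ at $\tilde u^{k+1}$ is a subgradient of $\Phi$ there. In the $\xi$-block the gradient of $\psi$ is $\mu_x(\tilde\xi-x^{k+1})=-e_x^{k+1}$. In the $y$-block, $\partial q(y^{k+1})-\nabla_yf(u^{k+1})$ contains $e_y^{k+1}$. This gives $(-e_x^{k+1},e_y^{k+1})\in\partial\Phi(\tilde u^{k+1})$. The step needing most care is the sum rule for $\partial\psi$: $\psi$ is a sum of a smooth convex function and the separable term $q(y)$, so it applies.

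\emph{Relative error.} Substituting the formulas above, the left-hand vector of~\eqref{eq:relative-proximal} collapses to $(-2r_x^{k+1},r_y^{k+1})$, so its norm is at most $2\norm{r^{k+1}}$. The test~\eqref{eq:relative} is stated in terms of $\norm{u^{k+1}-c^k}$, so I must convert it. Componentwise,
\[
x^{k+1}-c_x^k=2(\tilde u_x^{k+1}-c_x^k)+\tfrac{2}{\mu_x}r_x^{k+1},\qquad y^{k+1}-c_y^k=\tilde u_y^{k+1}-c_y^k .
\]
Therefore $\norm{u^{k+1}-c^k}\le2\norm{\tilde u^{k+1}-c^k}+2\norm{r^{k+1}}/\mu_x$.

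Write $a=\norm{c^k-u^0}/(k+2)^2$. Inserting this bound into~\eqref{eq:relative} gives
\[
\norm{r^{k+1}}\le\tfrac{\mu_x}{64}\bigl(2\norm{\tilde u^{k+1}-c^k}+2\norm{r^{k+1}}/\mu_x+a\bigr).
\]
Absorbing the $\norm{r^{k+1}}$ term on the left yields
\[
\tfrac{31}{32}\norm{r^{k+1}}\le\tfrac{\mu_x}{32}\norm{\tilde u^{k+1}-c^k}+\tfrac{\mu_x}{64}a .
\]
Hence $2\norm{r^{k+1}}\le\frac{2\mu_x}{31}\bigl(\norm{\tilde u^{k+1}-c^k}+a\bigr)\le\frac{\mu_x}{8}\bigl(\norm{\tilde u^{k+1}-c^k}+a\bigr)$, which is~\eqref{eq:relative-proximal}.
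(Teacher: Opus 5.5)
Your proof is correct and follows essentially the same route as the paper. The paper also shows that $x^{k+1}$ attains the supremum defining $\Phi(\tilde u^{k+1})$. It then uses the same minorant $q(y)-f(x^{k+1},y)-p(x^{k+1})+\frac{\mu_x}{2}\|x^{k+1}-\xi\|^2$, though it writes this out as an explicit chain of inequalities rather than invoking a touching convex minorant. It derives~\eqref{eq:relative-proximal} from the same identity $(-2r_x^{k+1},r_y^{k+1})$ and the same absorption step with factor $31/32$.

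The sum-rule step you flag as delicate needs no qualification condition. You only use the trivial inclusion $\partial g_1+\partial g_2\subseteq\partial(g_1+g_2)$.
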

\begin{proof}
Fix any reached iteration $k$. By step~2 of Algorithm~\ref{alg:contract} and~\eqref{eq:algorithm1-updates}, $r^{k+1}-F_k(u^{k+1})\in B(u^{k+1})$ and
\[
e^{k+1}=F(u^{k+1})+\bigl(r^{k+1}-F_k(u^{k+1})\bigr)\in(F+B)(u^{k+1}).
\]
In particular, $e_x^{k+1}\in\nabla_x f(u^{k+1})+\partial p(x^{k+1})$. In addition, it follows from~\eqref{eq:algorithm1-corrected-point} that $e_x^{k+1}=\mu_x(x^{k+1}-\tilde u_x^{k+1})$. Using this and strong convexity, we obtain that for every $x\in\mathcal X$,
\begin{equation}\label{eq:primal-certificate}
\begin{aligned}[b]
&f(x,y^{k+1})+p(x)\ge f(u^{k+1})+p(x^{k+1})+\ip{e_x^{k+1}}{x-x^{k+1}}+\frac{\mu_x}{2}\norm{x-x^{k+1}}^2\\
&=f(u^{k+1})+p(x^{k+1})+\ip{e_x^{k+1}}{x-\tilde u_x^{k+1}-e_x^{k+1}/\mu_x}+\frac{\mu_x}{2}\norm{x-\tilde u_x^{k+1}-e_x^{k+1}/\mu_x}^2\\
&=f(u^{k+1})+p(x^{k+1})+\frac{\mu_x}{2}\norm{x-\tilde u_x^{k+1}}^2-\frac{\norm{e_x^{k+1}}^2}{2\mu_x},
\end{aligned}
\end{equation}
where the first equality follows from $e_x^{k+1}=\mu_x(x^{k+1}-\tilde u_x^{k+1})$. Notice that equality holds in the first inequality at $x=x^{k+1}$. Thus the supremum in $\Phi(\tilde u^{k+1})$ in~\eqref{eq:convex-reformulation} is attained at $x^{k+1}$. Since $\tilde u_y^{k+1}=y^{k+1}$,
\begin{equation}\label{eq:phi-certificate-value}
\begin{aligned}[b]
&\Phi(\tilde u^{k+1})=q(y^{k+1})+\sup_x\left\{\frac{\mu_x}{2}\norm{x-\tilde u_x^{k+1}}^2-f(x,y^{k+1})-p(x)\right\}\\
&=q(y^{k+1})-f(u^{k+1})-p(x^{k+1})+\frac{\mu_x}{2}\norm{x^{k+1}-\tilde u_x^{k+1}}^2\\
&=q(y^{k+1})-f(u^{k+1})-p(x^{k+1})+\frac{\norm{e_x^{k+1}}^2}{2\mu_x},
\end{aligned}
\end{equation}
where the second equality follows from~\eqref{eq:primal-certificate}, and the last uses $x^{k+1}-\tilde u_x^{k+1}=e_x^{k+1}/\mu_x$. For any $(\xi,y)\in\dom\Phi$, it follows from $x^{k+1}=\tilde u_x^{k+1}+e_x^{k+1}/\mu_x$ that
\begin{equation}\label{eq:shifted-primal-square}
\frac{\mu_x}{2}\norm{x^{k+1}-\xi}^2=\frac{\norm{e_x^{k+1}}^2}{2\mu_x}-\ip{e_x^{k+1}}{\xi-\tilde u_x^{k+1}}+\frac{\mu_x}{2}\norm{\xi-\tilde u_x^{k+1}}^2.
\end{equation}
Using this,~\eqref{eq:convex-reformulation}, and~\eqref{eq:phi-certificate-value}, we obtain
\begin{align*}
&\Phi(\xi,y)\ge q(y)-f(x^{k+1},y)-p(x^{k+1})+\frac{\mu_x}{2}\norm{x^{k+1}-\xi}^2\\
&\ge q(y^{k+1})-f(u^{k+1})-p(x^{k+1})+\ip{e_y^{k+1}}{y-y^{k+1}}\\
&\quad+\frac{\mu_y}{2}\norm{y-y^{k+1}}^2+\frac{\mu_x}{2}\norm{x^{k+1}-\xi}^2\\
&=\Phi(\tilde u^{k+1})+\ip{(-e_x^{k+1},e_y^{k+1})}{(\xi,y)-\tilde u^{k+1}}+\frac{\mu_x}{2}\norm{\xi-\tilde u_x^{k+1}}^2+\frac{\mu_y}{2}\norm{y-y^{k+1}}^2,
\end{align*}
where the first inequality follows from the definition of the supremum and the choice $x^{k+1}$, the second inequality is due to the $\mu_y$-strong convexity of $q(\cdot)-f(x^{k+1},\cdot)$ with subgradient $e_y^{k+1}$ at $y^{k+1}$, and the equality follows from~\eqref{eq:phi-certificate-value} and~\eqref{eq:shifted-primal-square}. Hence, $(-e_x^{k+1},e_y^{k+1})\in\partial\Phi(\tilde u^{k+1})$.

We next show that~\eqref{eq:relative-proximal} holds. By~\eqref{eq:F-k} and~\eqref{eq:algorithm1-updates}, one has
\[
e^{k+1}=r^{k+1}+\left(\frac{\mu_x}{2}(x^{k+1}-c_x^k),-\mu_x(y^{k+1}-c_y^k)\right).
\]
Using this and~\eqref{eq:algorithm1-corrected-point}, we obtain
\begin{equation}\label{eq:corrected-point-expansion}
\tilde u^{k+1}=\left(x^{k+1}-\frac{e_x^{k+1}}{\mu_x},y^{k+1}\right)=\left(x^{k+1}-\frac{r_x^{k+1}}{\mu_x}-\frac{x^{k+1}-c_x^k}{2},y^{k+1}\right)=\left(\frac{x^{k+1}+c_x^k}{2}-\frac{r_x^{k+1}}{\mu_x},y^{k+1}\right),
\end{equation}
where the second equality follows from $e_x^{k+1}=r_x^{k+1}+\mu_x(x^{k+1}-c_x^k)/2$. In addition, we have
\begin{equation}\label{eq:point-residual-identities}
\begin{aligned}[b]
&\mu_x(\tilde u^{k+1}-c^k)+(-e_x^{k+1},e_y^{k+1})=\bigl(\mu_x(x^{k+1}-c_x^k)-2e_x^{k+1},\mu_x(y^{k+1}-c_y^k)+e_y^{k+1}\bigr)\\
&=\left(\mu_x(x^{k+1}-c_x^k)-2\left(r_x^{k+1}+\frac{\mu_x}{2}(x^{k+1}-c_x^k)\right),r_y^{k+1}\right)=(-2r_x^{k+1},r_y^{k+1}),
\end{aligned}
\end{equation}
where the first equality follows from $\tilde u_x^{k+1}=x^{k+1}-e_x^{k+1}/\mu_x$ and $\tilde u_y^{k+1}=y^{k+1}$, and the second follows from~\eqref{eq:F-k} and~\eqref{eq:algorithm1-updates}.

By~\eqref{eq:corrected-point-expansion}, $x^{k+1}-c_x^k=2(\tilde u_x^{k+1}-c_x^k)+2r_x^{k+1}/\mu_x$ and $y^{k+1}-c_y^k=\tilde u_y^{k+1}-c_y^k$. Using these, we obtain
\begin{align*}
&\norm{u^{k+1}-c^k}=\left\|\bigl(2(\tilde u_x^{k+1}-c_x^k),\tilde u_y^{k+1}-c_y^k\bigr)+\left(\frac{2r_x^{k+1}}{\mu_x},0\right)\right\|\\
&\le\sqrt{4\norm{\tilde u_x^{k+1}-c_x^k}^2+\norm{\tilde u_y^{k+1}-c_y^k}^2}+\frac{2}{\mu_x}\norm{r_x^{k+1}}\le2\norm{\tilde u^{k+1}-c^k}+\frac{2}{\mu_x}\norm{r^{k+1}}.
\end{align*}
Substituting this bound into~\eqref{eq:relative} yields
\begin{align*}
\norm{r^{k+1}}\le\frac{\mu_x}{64}\left(\norm{u^{k+1}-c^k}+\frac{\norm{c^k-u^0}}{(k+2)^2}\right)\le\frac{\mu_x}{32}\norm{\tilde u^{k+1}-c^k}+\frac1{32}\norm{r^{k+1}}+\frac{\mu_x}{64(k+2)^2}\norm{c^k-u^0},
\end{align*}
which further implies that
\begin{equation}\label{eq:inner-residual-bound}
\norm{r^{k+1}}\le\frac{\mu_x}{31}\norm{\tilde u^{k+1}-c^k}+\frac{\mu_x}{62(k+2)^2}\norm{c^k-u^0}.
\end{equation}
Using this and~\eqref{eq:point-residual-identities}, we have
\begin{align*}
&\left\|\mu_x(\tilde u^{k+1}-c^k)+(-e_x^{k+1},e_y^{k+1})\right\|=\sqrt{4\norm{r_x^{k+1}}^2+\norm{r_y^{k+1}}^2}\le2\norm{r^{k+1}}\\
&\le\frac{2\mu_x}{31}\norm{\tilde u^{k+1}-c^k}+\frac{\mu_x}{31(k+2)^2}\norm{c^k-u^0}\le\frac{\mu_x}{8}\left(\norm{\tilde u^{k+1}-c^k}+\frac{\norm{c^k-u^0}}{(k+2)^2}\right),
\end{align*}
where the second inequality follows from~\eqref{eq:inner-residual-bound}. Hence,~\eqref{eq:relative-proximal} holds.
\end{proof}

\Needspace{12\baselineskip}
The next three lemmas establish an energy estimate, a uniform bound, and the point and residual bounds for Algorithm~\ref{alg:contract}.

\begin{lemma}
Suppose Assumptions~\ref{ass:scsc} and~\ref{ass:subsolver} hold. Let $u^*$ be the unique saddle point of~\eqref{eq:scsc}, and let $\Phi$ be defined in~\eqref{eq:convex-reformulation}. For the sequence generated by Algorithm~\ref{alg:contract}, define
\begin{equation}\label{eq:auxiliary-point}
\bar u^i=\frac{i+2}{2}c^i-\frac{i}{2}\tilde u^i
\end{equation}
at each generated index $i$. Then, at every reached iteration $k$,
\begin{equation}\label{eq:additive-energy}
\begin{aligned}[b]
&\frac{(k+2)^2}{4\mu_x}\bigl(\Phi(\tilde u^{k+1})-\Phi(u^*)\bigr)+\frac12\norm{\bar u^{k+1}-u^*}^2+\frac{31}{256}\sum_{i=0}^k(i+2)^2\norm{\tilde u^{i+1}-c^i}^2\\
&\le\frac12\norm{u^0-u^*}^2+\frac1{256}\sum_{i=0}^k\frac{\norm{c^i-u^0}^2}{(i+2)^2}.
\end{aligned}
\end{equation}
\end{lemma}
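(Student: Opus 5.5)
We prove \eqref{eq:additive-energy} by a standard Lyapunov argument for accelerated inexact proximal point methods (in the style of Güler/Monteiro--Svaiter), applied to the convex function $\Phi$. The only inputs needed are Lemma~\ref{lem:certificate} and the update rules \eqref{eq:algorithm1-center}, \eqref{eq:auxiliary-point}.

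Write $g^{i+1}=(-e_x^{i+1},e_y^{i+1})\in\partial\Phi(\tilde u^{i+1})$, and set $v^{i+1}=\mu_x(\tilde u^{i+1}-c^i)+g^{i+1}=(-2r_x^{i+1},r_y^{i+1})$, as in \eqref{eq:point-residual-identities}. Let $a_i=(i+2)^2/(4\mu_x)$ and $E_k=a_k(\Phi(\tilde u^{k+1})-\Phi^*)+\frac12\norm{\bar u^{k+1}-u^*}^2$.

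\textbf{Step 1: a one-step inequality.} Using convexity of $\Phi$ (strong convexity is simply discarded), apply the subgradient inequality at $\tilde u^{i+1}$ against both $u^*$ and $\tilde u^i$. Combine the two with weights $\frac{2}{i+2}$ and $\frac{i}{i+2}$, and multiply by $a_i$. Since $a_i\frac{i}{i+2}\ge a_{i-1}$ and $\Phi(\tilde u^i)-\Phi^*\ge0$, this gives
\[
a_i(\Phi(\tilde u^{i+1})-\Phi^*)\le a_{i-1}(\Phi(\tilde u^i)-\Phi^*)+a_i\ip{g^{i+1}}{\tilde u^{i+1}-\tfrac{i}{i+2}\tilde u^i-\tfrac{2}{i+2}u^*}.
\]
Here the convention $a_{-1}=0$ handles $i=0$.

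\textbf{Step 2: identify the $\bar u$-recursion.} Using \eqref{eq:algorithm1-center} and \eqref{eq:auxiliary-point}, and the relation $g^{i+1}=v^{i+1}-\mu_x(\tilde u^{i+1}-c^i)$, I expect the extrapolation to be designed exactly so that
\[
\bar u^{i+1}=\bar u^i-\tfrac{i+2}{2\mu_x}g^{i+1}.
\]
This is a direct check: $\bar u^{i+1}=\frac{i+3}{2}c^{i+1}-\frac{i+1}{2}\tilde u^{i+1}$. Substituting $c^{i+1}$ turns the residual correction $\frac{i+2}{(i+3)\mu_x}(2r_x,-r_y)=-\frac{i+2}{(i+3)\mu_x}v^{i+1}$ into exactly the $v$-part of $g^{i+1}$. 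At $i=0$ we have $\bar u^0=c^0=u^0$.

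Then expand $\frac12\norm{\bar u^{i+1}-u^*}^2-\frac12\norm{\bar u^i-u^*}^2$. The cross term is $-\frac{i+2}{2\mu_x}\ip{g^{i+1}}{\bar u^i-u^*}$, and
\[
\tfrac{i+2}{2}\bigl(\tfrac{i+2}{2}c^i-\tfrac{i}{2}\tilde u^i-u^*\bigr)
\]
matches $a_i\cdot\frac{2\mu_x}{1}$ times the point in Step 1, up to replacing $\tilde u^{i+1}$ by $c^i$. The quadratic term is $\frac{(i+2)^2}{8\mu_x^2}\norm{g^{i+1}}^2$. Summing the two expansions, $E_i-E_{i-1}$ is bounded by
\[
a_i\Bigl(\ip{g^{i+1}}{\tilde u^{i+1}-c^i}+\tfrac{1}{2\mu_x}\norm{g^{i+1}}^2\Bigr)=\tfrac{a_i}{2\mu_x}\bigl(\norm{v^{i+1}}^2-\mu_x^2\norm{\tilde u^{i+1}-c^i}^2\bigr).
\]

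\textbf{Step 3: plug in the relative error and telescope.} By \eqref{eq:relative-proximal} and $(a+b)^2\le2a^2+2b^2$,
\[
\norm{v^{i+1}}^2\le\tfrac{\mu_x^2}{32}\Bigl(\norm{\tilde u^{i+1}-c^i}^2+\tfrac{\norm{c^i-u^0}^2}{(i+2)^4}\Bigr).
\]
Hence $E_i-E_{i-1}\le-\frac{(i+2)^2}{8}\cdot\frac{31}{32}\norm{\tilde u^{i+1}-c^i}^2+\frac{1}{256}\frac{\norm{c^i-u^0}^2}{(i+2)^2}$, which yields exactly the constants $\frac{31}{256}$ and $\frac1{256}$. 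Summing from $0$ to $k$ with $E_{-1}=\frac12\norm{u^0-u^*}^2$ gives \eqref{eq:additive-energy}.

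\textbf{Main obstacle.} The delicate step is Step 2: verifying the exact algebraic identity for $\bar u^{i+1}$ from the extrapolation rule, and checking that the weight comparison $a_i\frac{i}{i+2}\ge a_{i-1}$ (that is, $i(i+2)\ge(i+1)^2$) goes the right way. The latter fails by $1$, since $i(i+2)=(i+1)^2-1$. If so, the slack $\frac{1}{4\mu_x}(\Phi(\tilde u^i)-\Phi^*)\ge0$ has to be absorbed by dropping a nonnegative term rather than used as an equality. I would handle this by keeping the inequality direction: $a_i\frac{i}{i+2}=\frac{i(i+2)}{4\mu_x}\le a_{i-1}$ multiplies a nonnegative gap, so the bound still holds.
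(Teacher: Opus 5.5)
Your proposal is correct and follows the paper's proof essentially step for step. It uses the same two subgradient inequalities for $\Phi$ at $\tilde u^{i+1}$, with weights $i(i+2)/(4\mu_x)$ and $(i+2)/(2\mu_x)$, and the same identity $\bar u^{i+1}=\bar u^i-\frac{i+2}{2\mu_x}(-e_x^{i+1},e_y^{i+1})$. It bounds the same completed square $\norm{\mu_x(\tilde u^{i+1}-c^i)+(-e_x^{i+1},e_y^{i+1})}^2-\mu_x^2\norm{\tilde u^{i+1}-c^i}^2$ via \eqref{eq:relative-proximal}. It also absorbs the slack $i(i+2)\le(i+1)^2$ against the nonnegative gap exactly as in your final paragraph, so the ``$\ge$'' in Step~1 should read ``$\le$''.

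The only difference is cosmetic. The paper treats $i=0$ as a separate case rather than setting $a_{-1}=0$, which avoids any reference to $\Phi(\tilde u^0)=\Phi(u^0)$, a value that may be $+\infty$ since $u^0$ need not lie in $\dom\Phi$. Your convention is harmless as long as, at $i=0$, you use only the inequality against $u^*$, which is what the zero weight on $\tilde u^0$ amounts to.
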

\begin{proof}
Fix any reached iteration $k$. By Lemma~\ref{lem:certificate}, $(-e_x^{i+1},e_y^{i+1})\in\partial\Phi(\tilde u^{i+1})$ for $0\le i\le k$. By the subgradient inequality, one has
\begin{equation}\label{eq:successive-comparison}
\Phi(\tilde u^{i+1})-\Phi(\tilde u^i)\le\ip{(-e_x^{i+1},e_y^{i+1})}{\tilde u^{i+1}-\tilde u^i}\qquad \forall1\le i\le k.
\end{equation}
In addition, by the $\mu_y$-strong convexity of $\Phi$ and $0\in\partial\Phi(u^*)$, we have
\begin{equation}\label{eq:gap-bounds}
\frac{\mu_y}{2}\norm{\tilde u^{i+1}-u^*}^2\le\Phi(\tilde u^{i+1})-\Phi(u^*)\le\ip{(-e_x^{i+1},e_y^{i+1})}{\tilde u^{i+1}-u^*}\qquad\forall0\le i\le k.
\end{equation}
By~\eqref{eq:auxiliary-point}, $\bar u^0=u^0$. Using this definition and~\eqref{eq:algorithm1-center}, we obtain, for $0\le i\le k$,
\begin{align*}
&\bar u^{i+1}=\frac{i+3}{2}\left(\tilde u^{i+1}+\frac{i}{i+3}(\tilde u^{i+1}-\tilde u^i)+\frac{i+2}{(i+3)\mu_x}(2r_x^{i+1},-r_y^{i+1})\right)-\frac{i+1}{2}\tilde u^{i+1}\\
&=\frac{i+2}{2}\tilde u^{i+1}-\frac{i}{2}\tilde u^i+\frac{i+2}{2\mu_x}(2r_x^{i+1},-r_y^{i+1}).
\end{align*}
Recall from~\eqref{eq:point-residual-identities} that $(2r_x^{i+1},-r_y^{i+1})=-\mu_x(\tilde u^{i+1}-c^i)-(-e_x^{i+1},e_y^{i+1})$. Substituting this identity into the displayed expression for $\bar u^{i+1}$ and using~\eqref{eq:auxiliary-point}, we obtain
\[
\bar u^{i+1}=\frac{i+2}{2}c^i-\frac{i}{2}\tilde u^i-\frac{i+2}{2\mu_x}(-e_x^{i+1},e_y^{i+1})=\bar u^i-\frac{i+2}{2\mu_x}(-e_x^{i+1},e_y^{i+1}).
\]
It then follows that
\begin{equation}\label{eq:auxiliary-norm-difference}
\frac12\norm{\bar u^{i+1}-u^*}^2-\frac12\norm{\bar u^i-u^*}^2=-\frac{i+2}{2\mu_x}\ip{(-e_x^{i+1},e_y^{i+1})}{\bar u^i-u^*}+\frac{(i+2)^2}{8\mu_x^2}\norm{e^{i+1}}^2.
\end{equation}
For $1\le i\le k$, multiplying~\eqref{eq:successive-comparison} by $i(i+2)/(4\mu_x)$ and the upper bound in~\eqref{eq:gap-bounds} by $(i+2)/(2\mu_x)$ and adding them, we obtain
\begin{equation}\label{eq:weighted-gap}
\begin{aligned}[b]
&\frac{(i+2)^2}{4\mu_x}\bigl(\Phi(\tilde u^{i+1})-\Phi(u^*)\bigr)-\frac{i(i+2)}{4\mu_x}\bigl(\Phi(\tilde u^i)-\Phi(u^*)\bigr)\\
&=\frac{i(i+2)}{4\mu_x}\bigl(\Phi(\tilde u^{i+1})-\Phi(\tilde u^i)\bigr)+\frac{i+2}{2\mu_x}\bigl(\Phi(\tilde u^{i+1})-\Phi(u^*)\bigr)\\
&\le\frac{i(i+2)}{4\mu_x}\ip{(-e_x^{i+1},e_y^{i+1})}{\tilde u^{i+1}-\tilde u^i}+\frac{i+2}{2\mu_x}\ip{(-e_x^{i+1},e_y^{i+1})}{\tilde u^{i+1}-u^*}\\
&=\frac{i+2}{4\mu_x}\ip{(-e_x^{i+1},e_y^{i+1})}{(i+2)\tilde u^{i+1}-i\tilde u^i-2u^*}\\
&=\frac{(i+2)^2}{4\mu_x}\ip{(-e_x^{i+1},e_y^{i+1})}{\tilde u^{i+1}-c^i}+\frac{i+2}{2\mu_x}\ip{(-e_x^{i+1},e_y^{i+1})}{\bar u^i-u^*},
\end{aligned}
\end{equation}
where the inequality follows from~\eqref{eq:successive-comparison} and~\eqref{eq:gap-bounds} and the last equality follows from~\eqref{eq:auxiliary-point}.
Using~\eqref{eq:relative-proximal} to bound the sum of~\eqref{eq:auxiliary-norm-difference} and~\eqref{eq:weighted-gap}, we obtain
\begin{equation}\label{eq:one-step}
\begin{aligned}[b]
&\frac{(i+2)^2}{4\mu_x}\bigl(\Phi(\tilde u^{i+1})-\Phi(u^*)\bigr)+\frac12\norm{\bar u^{i+1}-u^*}^2-\frac{(i+1)^2}{4\mu_x}\bigl(\Phi(\tilde u^i)-\Phi(u^*)\bigr)-\frac12\norm{\bar u^i-u^*}^2\\
&\le\frac{(i+2)^2}{4\mu_x}\ip{(-e_x^{i+1},e_y^{i+1})}{\tilde u^{i+1}-c^i}+\frac{(i+2)^2}{8\mu_x^2}\norm{e^{i+1}}^2\\
&=\frac{(i+2)^2}{8\mu_x^2}\left(\left\|\mu_x(\tilde u^{i+1}-c^i)+(-e_x^{i+1},e_y^{i+1})\right\|^2-\mu_x^2\norm{\tilde u^{i+1}-c^i}^2\right)\\
&\le\frac{(i+2)^2}{512}\left(\norm{\tilde u^{i+1}-c^i}+\frac{\norm{c^i-u^0}}{(i+2)^2}\right)^2-\frac{(i+2)^2}{8}\norm{\tilde u^{i+1}-c^i}^2\\
&\le-\frac{31(i+2)^2}{256}\norm{\tilde u^{i+1}-c^i}^2+\frac{\norm{c^i-u^0}^2}{256(i+2)^2},
\end{aligned}
\end{equation}
where the first inequality also uses $i(i+2)\le(i+1)^2$ and $\Phi(\tilde u^i)\ge\Phi(u^*)$, the second inequality follows from~\eqref{eq:relative-proximal}, and the last inequality is due to the convexity of $|\cdot|^2$.

At $i=0$, multiplying the upper bound in~\eqref{eq:gap-bounds} by $1/\mu_x$ and adding~\eqref{eq:auxiliary-norm-difference}, we obtain
\begin{align*}
&\frac{1}{\mu_x}\left(\Phi(\tilde u^1)-\Phi(u^*)\right)+\frac12\norm{\bar u^1-u^*}^2-\frac12\norm{u^0-u^*}^2\le\frac1{\mu_x}\ip{(-e_x^1,e_y^1)}{\tilde u^1-c^0}+\frac1{2\mu_x^2}\norm{e^1}^2\\
&=\frac1{2\mu_x^2}\left(\left\|\mu_x(\tilde u^1-c^0)+(-e_x^1,e_y^1)\right\|^2-\mu_x^2\norm{\tilde u^1-c^0}^2\right)\le\frac12\left(\frac1{64}-1\right)\norm{\tilde u^1-c^0}^2\le-\frac{31}{64}\norm{\tilde u^1-c^0}^2,
\end{align*}
where the first inequality follows from $\bar u^0=c^0=u^0$ and \eqref{eq:auxiliary-norm-difference}, and the second inequality follows from~\eqref{eq:relative-proximal} and $c^0=u^0$. Summing this inequality and~\eqref{eq:one-step} for $i=1,\ldots,k$ yields~\eqref{eq:additive-energy}.
\end{proof}

The following lemma bounds the accumulated error in~\eqref{eq:additive-energy}.

\begin{lemma}
Suppose Assumptions~\ref{ass:scsc} and~\ref{ass:subsolver} hold. Let $u^*$ be the unique saddle point of~\eqref{eq:scsc}, $\Phi$ be defined in~\eqref{eq:convex-reformulation}, and $\bar u^i$ be defined by~\eqref{eq:auxiliary-point}. Then the sequence generated by Algorithm~\ref{alg:contract} satisfies $\norm{c^k-u^0}\le3\norm{u^0-u^*}$ and
\begin{equation}\label{eq:potential}
\frac{(k+2)^2}{4\mu_x}\bigl(\Phi(\tilde u^{k+1})-\Phi(u^*)\bigr)+\frac12\norm{\bar u^{k+1}-u^*}^2+\frac1{16}\sum_{i=0}^k(i+2)^2\norm{\tilde u^{i+1}-c^i}^2\le\norm{u^0-u^*}^2
\end{equation}
at every reached iteration $k$.
\end{lemma}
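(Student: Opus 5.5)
The plan is a strong induction on $k$ that proves the two claims together. Write $R=\norm{u^0-u^*}$. The base case is $c^0=u^0$. Suppose $\norm{c^i-u^0}\le3R$ for all $0\le i\le k$. Inserting this into the right-hand side of~\eqref{eq:additive-energy} gives
\[
\frac12R^2+\frac{9R^2}{256}\sum_{i=0}^k\frac1{(i+2)^2}\le\frac12R^2+\frac{9}{256}\Bigl(\frac{\pi^2}{6}-1\Bigr)R^2\le\frac{17}{32}R^2 .
\]
Since $\tfrac{31}{256}\ge\tfrac1{16}$, the left-hand side of~\eqref{eq:additive-energy} dominates that of~\eqref{eq:potential}, so~\eqref{eq:potential} holds at $k$, with some slack. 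It then remains to propagate $\norm{c^{k+1}-u^0}\le3R$. For this I would use only the two pieces of information the energy gives: $\norm{\bar u^{i}-u^*}\le\sqrt{17/16}\,R$ for all $i\le k+1$ (the bound at earlier indices comes from the same estimate applied there), and $\sum_{i=0}^k(i+2)^2\norm{\delta_i}^2\le\frac{17\cdot 256}{32\cdot 31}R^2$, where $\delta_i:=\tilde u^{i+1}-c^i$.

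Next I express $c^{k+1}$ through $\bar u^{k+1}$ and $\tilde u^{k+1}$. Definition~\eqref{eq:auxiliary-point} at index $k+1$ gives
\[
c^{k+1}=\frac{2\bar u^{k+1}+(k+1)\tilde u^{k+1}}{k+3},
\]
a convex combination. To control $\tilde u^{k+1}$ I would not use strong convexity of $\Phi$, since that brings in the factor $\mu_x/\mu_y$. Instead I would telescope. By~\eqref{eq:auxiliary-point}, $(i+2)c^i=2\bar u^i+i\tilde u^i$, hence $(i+2)\tilde u^{i+1}=2\bar u^i+i\tilde u^i+(i+2)\delta_i$. Multiplying by $(i+1)$ and summing over $i=0,\dots,k$ yields
\[
(k+1)(k+2)\tilde u^{k+1}=\sum_{i=0}^k2(i+1)\bar u^i+\sum_{i=0}^k(i+1)(i+2)\delta_i .
\]
Because $\sum_{i=0}^k2(i+1)=(k+1)(k+2)$, the point $\tilde u^{k+1}-u^*$ is a weighted average of the vectors $\bar u^i-u^*$ plus an error term. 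By Cauchy--Schwarz the error is at most
\[
\frac{\sqrt{\sum_i(i+1)^2}\,\sqrt{\sum_i(i+2)^2\norm{\delta_i}^2}}{(k+1)(k+2)}=O(R)\cdot\frac{1}{\sqrt{k+1}} .
\]
This gives $\norm{\tilde u^{k+1}-u^*}\le c_1R$ with an absolute constant $c_1$, and therefore $\norm{c^{k+1}-u^0}\le\norm{c^{k+1}-u^*}+R\le\max\{\sqrt{17/16},c_1\}R+R$.

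The main obstacle is the numerical constant: a crude version of the estimate above gives about $3.3R$ rather than $3R$. I would tighten it in three ways. First, keep the $1/\sqrt{k+1}$ decay and the exact sum $\sum_i(i+1)^2=(k+1)(k+2)(2k+3)/6$, and treat small $k$ separately (the case $k=0$ is explicit, because $\bar u^0=u^0$ and $\norm{\tilde u^1-c^0}$ is bounded directly). Second, use the slack $\tfrac{31}{256}$ versus $\tfrac1{16}$, so that the whole energy budget is not spent on both terms at once. Third, if needed, bound $c^{k+1}-u^0$ directly from the telescoped identity, where the weight on $\bar u^0=u^0$ cancels part of the $R$ term. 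If none of these reaches exactly $3$, the fallback is to carry the induction with a general constant $\kappa$ in $\norm{c^i-u^0}\le\kappa R$ and check that the closing inequality holds at $\kappa=3$; the only requirement is $9(\pi^2/6-1)/256$ small, which leaves room.
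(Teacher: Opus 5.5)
Your argument is correct, and it takes a genuinely different route from the paper's.

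\textbf{The paper's route.} It strengthens the induction hypothesis to two uniform bounds, $\norm{\tilde u^i-u^*}\le2R$ and $\norm{\bar u^i-u^*}\le\sqrt2R$, and reads off $\norm{c^i-u^0}\le3R$ from \eqref{eq:auxiliary-point}. It propagates the bound on $\tilde u$ through the one-step identity $\tilde u^{t+1}-u^*=\frac{t}{t+2}(\tilde u^t-u^*)+\frac{2}{t+2}(\bar u^{t+1}-u^*)+\mu_x^{-1}(-2r_x^{t+1},r_y^{t+1})$. The residual term is bounded using \eqref{eq:relative-proximal} together with the per-step estimate $\norm{\tilde u^{t+1}-c^t}\le4R/(t+2)$ taken from the potential. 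The constant then closes via $2\sqrt2+3/4\le4$.

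\textbf{Your route.} You carry only the bound on $c^i$. You unroll the companion identity $(i+2)\tilde u^{i+1}=2\bar u^i+i\tilde u^i+(i+2)\delta_i$ all the way. This makes $\tilde u^{k+1}-u^*$ an exact weighted average of $\bar u^0-u^*,\dots,\bar u^k-u^*$ plus an accumulated error. You control that error globally by Cauchy--Schwarz against $\sum_i(i+2)^2\norm{\delta_i}^2$.

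\textbf{What each buys.} Your route uses only the energy inequality and the definition of $\bar u^i$, with no second appeal to the inexactness criterion. It also shows that $\tilde u^{k+1}$ deviates from an average of the $\bar u^i$ by only $O(R/\sqrt{k+1})$. The paper's route buys a one-line constant check.

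\textbf{The constant.} The check you left open, which you called the main obstacle, closes easily. Using the convex weights you already wrote for $c^{k+1}$,
\[
\norm{c^{k+1}-u^*}\le\frac{2}{k+3}\norm{\bar u^{k+1}-u^*}+\frac{k+1}{k+3}\norm{\tilde u^{k+1}-u^*}\le\sqrt{\tfrac{17}{16}}\,R+\frac{1}{k+3}\sqrt{\frac{(k+1)(2k+3)}{6(k+2)}}\sqrt{\tfrac{136}{31}}\,R .
\]
Since $(k+1)(2k+3)\le2(k+2)(k+3)$, the coefficient of the last term is at most $\sqrt{136/31}/\sqrt{3(k+3)}\le\frac13\sqrt{136/31}<0.7$. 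Hence $\norm{c^{k+1}-u^0}<2.75R$ for every $k\ge0$, and none of your fallbacks (separate small-$k$ cases, exploiting the slack $\frac{31}{256}$ versus $\frac1{16}$, or a general constant $\kappa$) is needed.
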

\begin{proof}
Fix any reached iteration $k$. We show by induction that
\begin{equation}\label{eq:uniform-iterate-bounds}
\norm{\tilde u^i-u^*}\le2\norm{u^0-u^*},\qquad\norm{\bar u^i-u^*}\le\sqrt2\norm{u^0-u^*}\qquad\forall0\le i\le k+1.
\end{equation}
Both bounds hold at $i=0$ by definition. Suppose that they hold through some $t\le k$. By~\eqref{eq:auxiliary-point}, one has for $0\le i\le t$ that
\begin{equation}\label{eq:inductive-center-distance}
\begin{aligned}[b]
&\norm{c^i-u^0}=\left\|\frac{i}{i+2}(\tilde u^i-u^*)+\frac2{i+2}(\bar u^i-u^*)+(u^*-u^0)\right\|\\
&\le\left(\frac{2i+2\sqrt2}{i+2}+1\right)\norm{u^0-u^*}\le3\norm{u^0-u^*},
\end{aligned}
\end{equation}
where the first inequality follows from the induction hypothesis. Using this, we obtain
\begin{align*}
&\frac12\norm{u^0-u^*}^2+\frac1{256}\sum_{i=0}^t\frac{\norm{c^i-u^0}^2}{(i+2)^2}\le\left(\frac12+\frac9{256}\sum_{i=0}^t\frac1{(i+2)^2}\right)\norm{u^0-u^*}^2\le\norm{u^0-u^*}^2,
\end{align*}
where the last inequality follows from $\sum_{i=0}^t(i+2)^{-2}\le1$. By this,~\eqref{eq:additive-energy}, and $31/256\ge1/16$, the bound~\eqref{eq:potential} holds with $k=t$. Since all terms on the left are nonnegative, one has
\begin{equation}\label{eq:inductive-output-bounds}
\norm{\bar u^{t+1}-u^*}\le\sqrt2\norm{u^0-u^*},\qquad\norm{\tilde u^{t+1}-c^t}\le\frac4{t+2}\norm{u^0-u^*}.
\end{equation}
In addition, by~\eqref{eq:algorithm1-center} and~\eqref{eq:auxiliary-point}, we obtain
\begin{align*}
&\norm{\tilde u^{t+1}-u^*}=\left\|\frac{t}{t+2}(\tilde u^t-u^*)+\frac2{t+2}(\bar u^{t+1}-u^*)+\frac1{\mu_x}(-2r_x^{t+1},r_y^{t+1})\right\|\\
&\le\frac{t}{t+2}\norm{\tilde u^t-u^*}+\frac2{t+2}\norm{\bar u^{t+1}-u^*}+\frac18\left(\norm{\tilde u^{t+1}-c^t}+\frac{\norm{c^t-u^0}}{(t+2)^2}\right)\\
&\le\frac{2t+2\sqrt2+3/4}{t+2}\norm{u^0-u^*}\le2\norm{u^0-u^*},
\end{align*}
where the first inequality follows from~\eqref{eq:relative-proximal} and~\eqref{eq:point-residual-identities}, and the second follows from~\eqref{eq:inductive-center-distance},~\eqref{eq:inductive-output-bounds}, the induction hypothesis, and $t+2\ge2$. This completes the induction and establishes~\eqref{eq:potential}. Finally, by~\eqref{eq:auxiliary-point} and~\eqref{eq:uniform-iterate-bounds}, one has
\begin{equation}\label{eq:center-distance}
\norm{c^k-u^0}\le\frac{k}{k+2}\norm{\tilde u^k-u^*}+\frac2{k+2}\norm{\bar u^k-u^*}+\norm{u^0-u^*}\le3\norm{u^0-u^*}.
\end{equation}
Hence, the conclusion follows.
\end{proof}

We can now establish the point and residual bounds.

\begin{lemma}
Suppose Assumptions~\ref{ass:scsc} and~\ref{ass:subsolver} hold. Let $u^*$ be the unique saddle point of~\eqref{eq:scsc}, and let $e^{k+1}$ and $\tilde u^{k+1}$ be defined by~\eqref{eq:algorithm1-updates} and~\eqref{eq:algorithm1-corrected-point}, respectively, in Algorithm~\ref{alg:contract}. Then, at every reached iteration $k$, we have
\begin{equation}\label{eq:point-rate}
\norm{\tilde u^{k+1}-u^*}\le\frac{3}{k+2}\sqrt{\frac{\mu_x}{\mu_y}}\norm{u^0-u^*},\qquad \norm{e^{k+1}}\le\frac{5\mu_x}{k+2}\norm{u^0-u^*}.
\end{equation}
\end{lemma}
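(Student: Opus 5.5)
The plan is to read both bounds off the potential inequality~\eqref{eq:potential} together with the two-sided estimate~\eqref{eq:gap-bounds} and the residual identities of Lemma~\ref{lem:certificate}.

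For the point bound, we drop the nonnegative second and third terms on the left of~\eqref{eq:potential}. This gives
\[
\Phi(\tilde u^{k+1})-\Phi(u^*)\le\frac{4\mu_x}{(k+2)^2}\norm{u^0-u^*}^2.
\]
Combining it with the lower bound $\frac{\mu_y}{2}\norm{\tilde u^{k+1}-u^*}^2\le\Phi(\tilde u^{k+1})-\Phi(u^*)$ in~\eqref{eq:gap-bounds} yields
\[
\norm{\tilde u^{k+1}-u^*}\le\frac{2\sqrt2}{k+2}\sqrt{\frac{\mu_x}{\mu_y}}\norm{u^0-u^*}.
\]
Since $2\sqrt2\le3$, this is the first inequality.

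For the residual bound, we write $(-e_x^{k+1},e_y^{k+1})=-\mu_x(\tilde u^{k+1}-c^k)+\bigl(\mu_x(\tilde u^{k+1}-c^k)+(-e_x^{k+1},e_y^{k+1})\bigr)$. Since $\norm{e^{k+1}}=\norm{(-e_x^{k+1},e_y^{k+1})}$, the triangle inequality and~\eqref{eq:relative-proximal} give
\[
\norm{e^{k+1}}\le\frac{9\mu_x}{8}\norm{\tilde u^{k+1}-c^k}+\frac{\mu_x}{8(k+2)^2}\norm{c^k-u^0}.
\]
The third term of~\eqref{eq:potential} bounds $(k+2)^2\norm{\tilde u^{k+1}-c^k}^2\le16\norm{u^0-u^*}^2$, so $\norm{\tilde u^{k+1}-c^k}\le\frac{4}{k+2}\norm{u^0-u^*}$. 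The preceding lemma gives $\norm{c^k-u^0}\le3\norm{u^0-u^*}$. Using $(k+2)^{-2}\le(2(k+2))^{-1}$, we obtain
\[
\norm{e^{k+1}}\le\frac{\mu_x}{k+2}\left(\frac92+\frac3{16}\right)\norm{u^0-u^*}\le\frac{5\mu_x}{k+2}\norm{u^0-u^*}.
\]

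The argument consists only of these substitutions. The one point needing care is the constant bookkeeping for the residual: the factor $9/8$ combined with the bound $4/(k+2)$ must leave enough slack under $5$ to absorb the $(k+2)^{-2}$ term, which the computation above confirms.
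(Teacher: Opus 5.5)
Your proof is correct and follows the paper's argument essentially step for step: the point bound combines the lower estimate in \eqref{eq:gap-bounds} with the first term of \eqref{eq:potential} and $\sqrt8\le3$. The residual bound uses the triangle inequality with \eqref{eq:relative-proximal}, the $i=k$ summand of \eqref{eq:potential}, and $\norm{c^k-u^0}\le3\norm{u^0-u^*}$, with the same constant bookkeeping $\frac{9}{2(k+2)}+\frac{3}{8(k+2)^2}\le\frac{5}{k+2}$.
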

\begin{proof}
Fix any reached iteration $k$. By the lower bound in~\eqref{eq:gap-bounds} and~\eqref{eq:potential}, one has
\[
\frac{(k+2)^2\mu_y}{8\mu_x}\norm{\tilde u^{k+1}-u^*}^2\le\frac{(k+2)^2}{4\mu_x}\bigl(\Phi(\tilde u^{k+1})-\Phi(u^*)\bigr)\le\norm{u^0-u^*}^2.
\]
The first bound in~\eqref{eq:point-rate} follows from this and $\sqrt8\le3$. In addition, we have
\begin{align*}
&\norm{e^{k+1}}\le\left\|\mu_x(\tilde u^{k+1}-c^k)+(-e_x^{k+1},e_y^{k+1})\right\|+\mu_x\norm{\tilde u^{k+1}-c^k}\\
&\le\frac{9\mu_x}{8}\norm{\tilde u^{k+1}-c^k}+\frac{\mu_x}{8(k+2)^2}\norm{c^k-u^0}\\
&\le\left(\frac9{2(k+2)}+\frac3{8(k+2)^2}\right)\mu_x\norm{u^0-u^*}\le\frac{5\mu_x}{k+2}\norm{u^0-u^*},
\end{align*}
where the first inequality follows from the triangle inequality, the second inequality follows from~\eqref{eq:relative-proximal}, and the third inequality follows from~\eqref{eq:potential} and~\eqref{eq:center-distance}. Hence, the second bound in~\eqref{eq:point-rate} holds.
\end{proof}

We are now ready to prove Theorem~\ref{thm:contract}.

\begin{proof}[Proof of Theorem~\ref{thm:contract}]
By Lemma~\ref{lem:balanced-cost}, each call in step~2 starts at a point in $\mathcal X\times\mathcal Y$ and terminates with a pair satisfying~\eqref{eq:relative}. Since the remaining updates are explicit, Algorithm~\ref{alg:contract} is well-defined at every reached iteration. In addition, it follows from Lemma~\ref{lem:certificate} that $e^{k+1}\in(F+B)(u^{k+1})$.

Suppose that Algorithm~\ref{alg:contract} terminates at iteration $k$. If $e^{k+1}=0$, then the strong monotonicity of $F+B$ yields $u^{k+1}=u^*$, so~\eqref{eq:contract-guarantee} holds. Otherwise, by~\eqref{eq:contract-stop}, one has $1/9-3\sqrt{\mu_x/\mu_y}/(k+2)>0$. Using this,~\eqref{eq:algorithm1-corrected-point}, and~\eqref{eq:point-rate}, we obtain
\begin{align*}
&\norm{u^{k+1}-u^*}=\left\|\tilde u^{k+1}-u^*+\frac1{\mu_x}(e_x^{k+1},0)\right\|\le\norm{\tilde u^{k+1}-u^*}+\frac{\norm{e^{k+1}}}{\mu_x}\\
&\le\frac3{k+2}\sqrt{\frac{\mu_x}{\mu_y}}\norm{u^0-u^*}+\left(\frac19-\frac3{k+2}\sqrt{\frac{\mu_x}{\mu_y}}\right)\norm{u^{k+1}-u^0}\\
&\le\frac3{k+2}\sqrt{\frac{\mu_x}{\mu_y}}\norm{u^0-u^*}+\left(\frac19-\frac3{k+2}\sqrt{\frac{\mu_x}{\mu_y}}\right)\bigl(\norm{u^{k+1}-u^*}+\norm{u^0-u^*}\bigr)\\
&=\frac19\norm{u^0-u^*}+\left(\frac19-\frac3{k+2}\sqrt{\frac{\mu_x}{\mu_y}}\right)\norm{u^{k+1}-u^*},
\end{align*}
where the second inequality follows from~\eqref{eq:contract-stop} and~\eqref{eq:point-rate}. Rearranging, we obtain $\norm{u^{k+1}-u^*}\le\norm{u^0-u^*}/8$. Using this and~\eqref{eq:contract-stop}, we obtain
\[
\norm{e^{k+1}}\le\frac{\mu_x}{9}\norm{u^{k+1}-u^0}\le\frac{\mu_x}{9}\bigl(\norm{u^{k+1}-u^*}+\norm{u^0-u^*}\bigr)\le\frac{\mu_x}{8}\norm{u^0-u^*}.
\]
Thus~\eqref{eq:contract-guarantee} holds whenever the algorithm terminates.

We next establish the iteration bound. Suppose that it reaches iteration $k=\lceil128\sqrt{\mu_x/\mu_y}\rceil-1$. By~\eqref{eq:algorithm1-corrected-point} and~\eqref{eq:point-rate}, one has
\begin{align*}
&\norm{u^{k+1}-u^*}\le\norm{\tilde u^{k+1}-u^*}+\frac{\norm{e^{k+1}}}{\mu_x}\le\frac{3\sqrt{\mu_x/\mu_y}+5}{k+2}\norm{u^0-u^*}\\
&\le\frac{8\sqrt{\mu_x/\mu_y}}{128\sqrt{\mu_x/\mu_y}}\norm{u^0-u^*}=\frac1{16}\norm{u^0-u^*},
\end{align*}
where the last inequality follows from $k+2\ge128\sqrt{\mu_x/\mu_y}$ and $\mu_x\ge\mu_y$. It then follows that
\[
\norm{u^{k+1}-u^0}\ge\norm{u^0-u^*}-\norm{u^{k+1}-u^*}\ge\frac{15}{16}\norm{u^0-u^*}.
\]
Using this and the second bound in~\eqref{eq:point-rate}, we obtain
\begin{align*}
\norm{e^{k+1}}\le\frac{5\mu_x}{k+2}\norm{u^0-u^*}\le\frac{5\mu_x}{128}\norm{u^0-u^*}\le\frac{\mu_x}{24}\norm{u^{k+1}-u^0}\le\mu_x\left(\frac19-\frac3{k+2}\sqrt{\frac{\mu_x}{\mu_y}}\right)\norm{u^{k+1}-u^0},
\end{align*}
where the last inequality follows from $1/9-3\sqrt{\mu_x/\mu_y}/(k+2)\ge1/9-3/128\ge1/24$. Thus~\eqref{eq:contract-stop} holds, and the algorithm terminates within $\lceil128\sqrt{\mu_x/\mu_y}\rceil$ iterations.

The iteration bound and Lemma~\ref{lem:balanced-cost} yield the explicit evaluation bound in~\eqref{eq:algorithm1-cost}. Its leading term $1$ accounts for the initial evaluation of $\nabla f$ and is not needed for $\prox_p$ or $\prox_q$. Since $L_{\nabla f}/\mu_x\ge1$ and $\mu_x/\mu_y\ge1$, the same bound has the stated asymptotic order.
\end{proof}
\subsection{Proof of the main result in Subsection~\ref{sec:scsc}}
\label{subsec:proof-algorithm2}

In this subsection we first prove two lemmas for Algorithm~\ref{alg:scsc} and then prove Theorem~\ref{thm:scsc}.

\begin{lemma}\label{lem:algorithm2-well-defined}
Suppose that Assumptions~\ref{ass:scsc} and~\ref{ass:subsolver} hold. Let $u^*$ be the unique saddle point of~\eqref{eq:scsc}, and let $F$ and $B$ be defined in~\eqref{eq:scsc-operators}. Then Algorithm~\ref{alg:scsc}, initialized at $u^0$, is well-defined at every reached iteration. The pair $(u^1,e^1)$ generated in step~1 satisfies $e^1\in(F+B)(u^1)$ and
\begin{equation}\label{eq:algorithm2-initial-distance}
7\norm{u^1-u^*}\le\norm{u^1-u^0}\le\frac98\norm{u^0-u^*}.
\end{equation}
At every reached regularized iteration $k\ge1$, the subproblem in step~3 has a unique saddle point $u_*^k$. Moreover, $u^{k+1}$ and $e^{k+1}$, defined in steps~3 and~4, respectively, satisfy $u^{k+1}\in\mathcal X\times\mathcal Y$ and $e^{k+1}\in(F+B)(u^{k+1})$.
\end{lemma}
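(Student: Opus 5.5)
The plan has three parts. First, we apply Theorem~\ref{thm:contract} to the preliminary call. Next, we get the two-sided distance bound~\eqref{eq:algorithm2-initial-distance} from the contraction guarantee by the triangle inequality. Finally, we check that each regularized subproblem satisfies Assumption~\ref{ass:scsc} with the shifted parameters, and convert its residual into an original one.

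\emph{Step 1 (preliminary call).} Under Assumptions~\ref{ass:scsc} and~\ref{ass:subsolver}, Theorem~\ref{thm:contract} shows that the call in step~1 is well-defined and terminates. Its output satisfies $e^1\in(F+B)(u^1)$ and $\norm{u^1-u^*}\le\frac18\norm{u^0-u^*}$. The inclusion $e^1\in(F+B)(u^1)$ forces $u^1\in\dom B=\mathcal X\times\mathcal Y$. The upper bound in~\eqref{eq:algorithm2-initial-distance} follows from
\[
\norm{u^1-u^0}\le\norm{u^1-u^*}+\norm{u^0-u^*}\le\tfrac98\norm{u^0-u^*}.
\]
For the lower bound, we write
\[
\norm{u^1-u^0}\ge\norm{u^0-u^*}-\norm{u^1-u^*}\ge 8\norm{u^1-u^*}-\norm{u^1-u^*}=7\norm{u^1-u^*}.
\]
Here we used $\norm{u^0-u^*}\ge 8\norm{u^1-u^*}$. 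If the loop is entered, then $\norm{e^1}>\epsilon\ge0$, so $u^1\ne u^*$. Combined with the lower bound, this gives $\norm{u^1-u^0}>0$, hence $\lambda_1$ is well-defined and positive.

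\emph{Step 2 (regularized iterations, by induction on $k$).} Assume $u^k\in\mathcal X\times\mathcal Y$ and $\lambda_k>0$. The function $f_k$ adds $\frac{\lambda_k}{2}\norm{x-\bar u_x^k}^2-\frac{\lambda_k}{2}\norm{y-\bar u_y^k}^2$ to $f$. It is therefore $(\mu_x+\lambda_k)$-strongly convex in $x$ and $(\mu_y+\lambda_k)$-strongly concave in $y$. For smoothness, the gradient of the added quadratic is $\lambda_k(x-\bar u_x^k,\,-(y-\bar u_y^k))$, which is $\lambda_k$-Lipschitz. Hence $f_k$ is $(L_{\nabla f}+\lambda_k)$-smooth. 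The ordering $0<\mu_y+\lambda_k\le\mu_x+\lambda_k\le L_{\nabla f}+\lambda_k$ is preserved. So Assumption~\ref{ass:scsc} holds for the subproblem with the supplied parameters, and it has a unique saddle point $u_*^k$.

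Applying Theorem~\ref{thm:contract} to this subproblem, the call in step~3 is well-defined, terminates, and returns $r^{k+1}\in(F^{(k)}+B)(u^{k+1})$. Here $F^{(k)}=F+\lambda_k\bigl(x-\bar u_x^k,\,y-\bar u_y^k\bigr)$, since the $y$-block of $F$ carries a minus sign on $\nabla_y f_k$. This inclusion gives $u^{k+1}\in\mathcal X\times\mathcal Y$. Subtracting the single-valued term, we obtain
\[
e^{k+1}=r^{k+1}-\lambda_k(u^{k+1}-\bar u^k)\in(F+B)(u^{k+1}).
\]
The updates $\lambda_{k+1}=4\lambda_k>0$ and $\bar u^{k+1}$ are explicit, which completes the induction.

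The only delicate points are the sign bookkeeping in the $y$-block of $F^{(k)}$ and ensuring $\lambda_1$ is well-defined. The latter rests on the strict inequality $\norm{e^1}>\epsilon$, and I expect it to be the main obstacle. Everything else reduces to Theorem~\ref{thm:contract}.
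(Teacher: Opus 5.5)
Your outline matches the paper's: Theorem~\ref{thm:contract} for the preliminary call, the triangle inequality for \eqref{eq:algorithm2-initial-distance}, checking Assumption~\ref{ass:scsc} for $f_k$ with the shifted parameters, and subtracting $\lambda_k(u^{k+1}-\bar u^k)$. Those parts, including the sign bookkeeping in the $y$-block, are correct.

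There is a genuine gap exactly at the step you flagged. The inference ``$\norm{e^1}>\epsilon\ge0$, so $u^1\ne u^*$'' is invalid. The set $(F+B)(u^*)$ contains $0$, but it is generally not $\{0\}$. Suppose $u^*$ lies on the boundary of $\mathcal X\times\mathcal Y$, for instance when $p$ is the indicator of a half-line. Then $B(u^*)$ contains a whole normal cone, and a computed certificate $e^1\in(F+B)(u^1)$ can be nonzero even when $u^1=u^*$ exactly. This is not pathological. A forward--backward output $u=(I+\gamma B)^{-1}(w-\gamma F(w))$ can land exactly on a boundary saddle point while its certificate $(w-u)/\gamma+F(u)-F(w)$ is nonzero. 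Your implication holds only when $(F+B)(u^*)=\{0\}$, and the lemma does not assume that.

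The conclusion $\norm{u^1-u^0}>0$ is still true. The repair uses the half of \eqref{eq:contract-guarantee} you never invoked, namely $\norm{e^1}\le\frac{\mu_x}{8}\norm{u^0-u^*}$. If the loop is entered, then $\norm{e^1}>\epsilon>0$ forces $u^0\ne u^*$. Hence $\norm{u^1-u^0}\ge\norm{u^0-u^*}-\norm{u^1-u^*}\ge\frac78\norm{u^0-u^*}>0$.

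This is the contrapositive of the paper's argument. There, $u^1=u^0$ together with \eqref{eq:algorithm2-initial-distance} gives $u^1=u^*=u^0$. The residual bound then gives $e^1=0$, so the algorithm already stops in step~1.

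A minor point: $\dom B=\dom\partial p\times\dom\partial q$ is only contained in $\mathcal X\times\mathcal Y$, not equal to it. The containment is all you need.
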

\begin{proof}
By Theorem~\ref{thm:contract}, the preliminary call terminates with $u^1\in\mathcal X\times\mathcal Y$, $e^1\in(F+B)(u^1)$, and $\norm{u^1-u^*}\le\norm{u^0-u^*}/8$. Using this and the triangle inequality, we obtain
\[
8\norm{u^1-u^*}\le\norm{u^1-u^0}+\norm{u^1-u^*},\qquad
\norm{u^1-u^0}\le\frac98\norm{u^0-u^*}.
\]
It then follows that~\eqref{eq:algorithm2-initial-distance} holds. If $u^1=u^0$, this inequality implies $u^0=u^*$. By this and~\eqref{eq:contract-guarantee}, one has $e^1=0$, and hence the algorithm terminates in step~1. Therefore, whenever the regularized loop is reached, $\norm{u^1-u^0}>0$, and the definitions of $\lambda_1$ and $\bar u^1$ in step~1 are well-defined with $\lambda_1>0$ and $\bar u^1=u^1$.

We next show by induction that $u^k\in\mathcal X\times\mathcal Y$, $\lambda_k>0$, and $\bar u^k$ is well-defined at every reached regularized iteration $k$. These properties hold at $k=1$ by step~1. Suppose that they hold at some reached iteration $k\ge1$. By Assumption~\ref{ass:scsc} and the definition of $f_k$ in step~3, $f_k$ has an $(L_{\nabla f}+\lambda_k)$-Lipschitz gradient and is $(\mu_x+\lambda_k)$-strongly convex in $x$ and $(\mu_y+\lambda_k)$-strongly concave in $y$ on $\mathcal X\times\mathcal Y$. Hence the regularized problem satisfies Assumption~\ref{ass:scsc} with the supplied parameters and has a unique saddle point $u_*^k$. It then follows from Theorem~\ref{thm:contract} that the call in step~3 terminates with $u^{k+1}\in\mathcal X\times\mathcal Y$ and
\[
r^{k+1}\in(F+B)(u^{k+1})+\lambda_k(u^{k+1}-\bar u^k).
\]
Using this and the definition of $e^{k+1}$ in step~4, we obtain $e^{k+1}\in(F+B)(u^{k+1})$. If the algorithm does not terminate in step~5, then $\lambda_{k+1}=4\lambda_k>0$ and $\bar u^{k+1}=(\bar u^k+3u^{k+1})/4$ are well-defined. Thus the induction hypothesis holds at iteration $k+1$. Hence the induction is completed and the conclusion of this lemma holds.
\end{proof}

The next lemma bounds the residual $e^{k+1}$ defined in step~4 of Algorithm~\ref{alg:scsc}.

\begin{lemma}
Suppose that Assumptions~\ref{ass:scsc} and~\ref{ass:subsolver} hold. Let $u^*$ be the unique saddle point of~\eqref{eq:scsc}. For the sequence generated by Algorithm~\ref{alg:scsc}, let $u^1$ and $\lambda_1$ be defined in step~1, $\lambda_k$ by step~5 for $k\ge2$, and $e^{k+1}$ by step~4. Then, at every reached regularized iteration $k\ge1$, we have
\begin{equation}\label{eq:stage-residual}
\norm{e^{k+1}}\le\left(\frac94\lambda_1+(\mu_x+2\lambda_k)8^{-k}\right)\norm{u^1-u^*}.
\end{equation}
\end{lemma}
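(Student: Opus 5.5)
The plan is to split $e^{k+1}$ into an exact-subproblem part and an inexactness part. Write $T=F+B$, and let $u_*^k$ be the saddle point of the step-3 subproblem from Lemma~\ref{lem:algorithm2-well-defined}. Then $-\lambda_k(u_*^k-\bar u^k)\in T(u_*^k)$. Step~4 gives
\[
e^{k+1}=r^{k+1}-\lambda_k(u^{k+1}-u_*^k)+g_k,\qquad g_k:=\lambda_k(\bar u^k-u_*^k).
\]
Theorem~\ref{thm:contract}, applied to the regularized problem started from $u^k$ with curvature $\mu_x+\lambda_k$, gives
\[
\norm{u^{k+1}-u_*^k}\le D_k/8,\qquad \norm{r^{k+1}}\le(\mu_x+\lambda_k)D_k/8,\qquad D_k:=\norm{u^k-u_*^k}.
\]
Hence
\[
\norm{e^{k+1}}\le\norm{g_k}+\tfrac18(\mu_x+2\lambda_k)D_k.
\]
This matches the shape of~\eqref{eq:stage-residual}. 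It remains to prove $D_k\lesssim8^{-(k-1)}\norm{u^1-u^*}$ and $\norm{g_k}\le\tfrac94\lambda_1\norm{u^1-u^*}$.

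\emph{Base case.} At $k=1$ we have $\bar u^1=u^1$, so $u_*^1=(I+\lambda_1^{-1}T)^{-1}(u^1)$ and $u^*=(I+\lambda_1^{-1}T)^{-1}(u^*)$. Firm nonexpansiveness gives
\[
\norm{u_*^1-u^*}^2+\norm{(u^1-u_*^1)-(u^*-u^*)}^2\le\norm{u^1-u^*}^2 .
\]
Hence $D_1\le\norm{u^1-u^*}$ and $\norm{g_1}=\lambda_1D_1\le\lambda_1\norm{u^1-u^*}$.

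\emph{Accumulation identity.} The updates $\lambda_{k+1}=4\lambda_k$ and $\bar u^{k+1}=(\bar u^k+3u^{k+1})/4$ give $\lambda_{k+1}\bar u^{k+1}=\lambda_k\bar u^k+3\lambda_ku^{k+1}$. So the $(k+1)$-st operator is
\[
T+\lambda_k(\cdot-\bar u^k)+3\lambda_k(\cdot-u^{k+1}).
\]
At $u_*^k$ this operator has the residual $3\lambda_k(u_*^k-u^{k+1})$. It is $(\mu_y+4\lambda_k)$-strongly monotone, so
\[
\norm{u_*^{k+1}-u_*^k}\le\tfrac34\norm{u^{k+1}-u_*^k}.
\]
Two consequences follow:
\begin{itemize}
\item $D_{k+1}\le\norm{u^{k+1}-u_*^k}+\norm{u_*^k-u_*^{k+1}}\le\tfrac74\cdot\tfrac18D_k$, a geometric decay.
\item The identity $g_{k+1}=g_k+3\lambda_k(u^{k+1}-u_*^k)+4\lambda_k(u_*^k-u_*^{k+1})$ gives $\norm{g_{k+1}}\le\norm{g_k}+6\lambda_k\norm{u^{k+1}-u_*^k}\le\norm{g_k}+\tfrac34\lambda_kD_k$.
\end{itemize}
Since $\lambda_k=4^{k-1}\lambda_1$ and $D_k$ decays at least like $(7/32)^{k-1}$, the increments $\lambda_kD_k$ form a geometric series with ratio below $1$. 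Summing bounds $\norm{g_k}$ by a constant multiple of $\lambda_1\norm{u^1-u^*}$, which gives~\eqref{eq:stage-residual} up to absolute constants.

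The main obstacle is matching the sharp constants $\tfrac94$ and $8^{-k}$. The crude estimates above give ratio $7/32$ rather than $1/8$ for $D_k$, and roughly $7\lambda_1$ rather than $\tfrac94\lambda_1$ for $\norm{g_k}$. To close this gap I would first replace the triangle inequalities by firm-nonexpansive and strong-monotone inner-product estimates. One option is to track $\norm{u^{k+1}-u_*^{k+1}}$ directly through the resolvent of the $(k+1)$-st problem, using that $u^{k+1}$ enters its center with weight $3/4$, which shrinks the $\tfrac34$ factor. Another is to compare $u_*^k$ with $u^*$, not with $u_*^{k-1}$, in bounding $g_k$. If this fails, I would aim for a potential of the form $\norm{g_k}+c\lambda_kD_k$ and verify by induction that it is non-increasing with the required constants.
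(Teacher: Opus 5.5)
There is a genuine gap: as written, your argument proves only a weaker inequality than the lemma. Your decomposition $e^{k+1}=r^{k+1}-\lambda_k(u^{k+1}-u_*^k)+g_k$, the base case, and the accumulation identity are the right ingredients. But your per-stage ratio $\tfrac74\cdot\tfrac18=\tfrac7{32}$ gives only $D_k\le(7/32)^{k-1}\norm{u^1-u^*}$ and a bias bound near $7\lambda_1\norm{u^1-u^*}$. The lemma asserts the explicit constants $8^{-k}$ and $\tfrac94$, and these are the ones used in the proof of Theorem~\ref{thm:scsc}. ``Up to absolute constants'' does not establish the statement, and the fixes you list are not carried out.

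The loss occurs where you compare $u_*^{k+1}$ with $u_*^k$ via strong monotonicity and then apply the triangle inequality. The missing observation is that no triangle inequality is needed. Your accumulation identity shows $3\lambda_k(u^{k+1}-u_*^{k+1})\in S_k(u_*^{k+1})$ for the monotone operator $S_k:=T+\lambda_k(\cdot-\bar u^k)$, while $0\in S_k(u_*^k)$. Monotonicity of $S_k$ gives $\ip{u^{k+1}-u_*^{k+1}}{u_*^{k+1}-u_*^k}\ge0$, hence
\[
\norm{u^{k+1}-u_*^{k+1}}^2+\norm{u_*^{k+1}-u_*^k}^2\le\norm{u^{k+1}-u_*^k}^2 .
\]
This gives $D_{k+1}\le\norm{u^{k+1}-u_*^k}\le D_k/8$ directly. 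With your base case it yields $D_k\le8^{-(k-1)}\norm{u^1-u^*}$ and $\norm{u^{k+1}-u_*^k}\le8^{-k}\norm{u^1-u^*}$. That produces the term $(\mu_x+2\lambda_k)8^{-k}\norm{u^1-u^*}$ exactly. So the factor $\tfrac74$ becomes $1$; it is not merely ``shrunk'' as your first suggested fix anticipates.

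This sharp decay alone still does not give $\tfrac94$. Your grouping is $g_{k+1}-g_k=3\lambda_k(u^{k+1}-u_*^k)+4\lambda_k(u_*^k-u_*^{k+1})$, used with $\norm{u_*^{k+1}-u_*^k}\le\tfrac34\norm{u^{k+1}-u_*^k}$. It bounds the increments by $6\lambda_k8^{-k}\norm{u^1-u^*}$, which sum to $\tfrac32\lambda_1\norm{u^1-u^*}$ and give $\tfrac52$ instead of $\tfrac94$. The paper groups the same vector as
\[
g_{k+1}-g_k=\lambda_{k+1}(u^{k+1}-u_*^{k+1})-\lambda_k(u^{k+1}-u_*^k),
\]
which follows from $\lambda_{k+1}\bar u^{k+1}-\lambda_k\bar u^k=(\lambda_{k+1}-\lambda_k)u^{k+1}$. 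Then
\[
\norm{g_{k+1}-g_k}\le5\lambda_k8^{-k}\norm{u^1-u^*}=\tfrac58\,2^{-(k-1)}\lambda_1\norm{u^1-u^*}.
\]
These increments sum to at most $\tfrac54\lambda_1\norm{u^1-u^*}$. Adding $\norm{g_1}\le\lambda_1\norm{u^1-u^*}$ gives the stated $\tfrac94\lambda_1\norm{u^1-u^*}$. Combined with your splitting of $e^{k+1}$, this completes the proof with the claimed constants.
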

\begin{proof}
Fix any reached regularized iteration $k\ge1$. Let $u_*^i$ be the unique saddle point of the subproblem in step~3 at iteration $i$, whose existence follows from Lemma~\ref{lem:algorithm2-well-defined}. By steps~1 and~5 of Algorithm~\ref{alg:scsc}, one has $\bar u^1=u^1$ and
\begin{align}
&\lambda_i=4^{i-1}\lambda_1\qquad\forall1\le i\le k,\label{eq:algorithm2-regularization}\\
&\lambda_i\bar u^i=4\lambda_{i-1}\frac{\bar u^{i-1}+3u^i}{4}=\lambda_{i-1}\bar u^{i-1}+(\lambda_i-\lambda_{i-1})u^i\qquad\forall2\le i\le k.\label{eq:algorithm2-center}
\end{align}
By the definition of $f_i$ in step~3 and the KKT condition at $u_*^i$, one has
\begin{equation}\label{eq:algorithm2-subproblem-kkt}
0\in(F+B)(u_*^i)-\lambda_i(\bar u^i-u_*^i)\quad\Rightarrow\quad\lambda_i(\bar u^i-u_*^i)\in(F+B)(u_*^i)\qquad\forall1\le i\le k.
\end{equation}
Using $\bar u^1=u^1$, $0\in(F+B)(u^*)$, and~\eqref{eq:algorithm2-subproblem-kkt} at $i=1$, we obtain
\begin{equation}\label{eq:algorithm2-first-distance}
0\le2\ip{u^1-u_*^1}{u_*^1-u^*}=\norm{u^1-u^*}^2-\norm{u^1-u_*^1}^2-\norm{u_*^1-u^*}^2\quad\Rightarrow\quad\norm{u^1-u_*^1}\le\norm{u^1-u^*},
\end{equation}
where the first inequality follows from the monotonicity of $F+B$ and $\lambda_1>0$. For $1\le i\le k$, applying Theorem~\ref{thm:contract} to the call in step~3 with initial point $u^i$ and strong convexity parameter $\mu_x+\lambda_i$, we obtain
\begin{equation}\label{eq:algorithm2-call-bounds}
\norm{u^{i+1}-u_*^i}\le\frac18\norm{u^i-u_*^i},\qquad\norm{r^{i+1}}\le\frac{\mu_x+\lambda_i}{8}\norm{u^i-u_*^i}.
\end{equation}
For $2\le i\le k$, it follows from~\eqref{eq:algorithm2-center} and~\eqref{eq:algorithm2-subproblem-kkt} that
\[
(\lambda_i-\lambda_{i-1})(u^i-u_*^i)\in(F+B)(u_*^i)+\lambda_{i-1}(u_*^i-\bar u^{i-1}).
\]
In addition,~\eqref{eq:algorithm2-subproblem-kkt} at $i-1$ implies $0\in(F+B)(u_*^{i-1})+\lambda_{i-1}(u_*^{i-1}-\bar u^{i-1})$. By these inclusions, the monotonicity of $u\mapsto(F+B)(u)+\lambda_{i-1}(u-\bar u^{i-1})$, and $\lambda_i>\lambda_{i-1}$, one has
\[
0\le2\ip{u^i-u_*^i}{u_*^i-u_*^{i-1}}=\norm{u^i-u_*^{i-1}}^2-\norm{u^i-u_*^i}^2-\norm{u_*^i-u_*^{i-1}}^2.
\]
Using this and~\eqref{eq:algorithm2-call-bounds} at $i-1$, we obtain
\[
\norm{u^i-u_*^i}\le\norm{u^i-u_*^{i-1}}\le\frac18\norm{u^{i-1}-u_*^{i-1}}\qquad\forall2\le i\le k.
\]
By~\eqref{eq:algorithm2-first-distance}, the first bound in~\eqref{eq:algorithm2-stage-errors} below holds at $i=1$. Suppose that it holds at some $i<k$. Using the displayed inequality at $i+1$, we obtain $\norm{u^{i+1}-u_*^{i+1}}\le\norm{u^i-u_*^i}/8\le8^{-i}\norm{u^1-u^*}$. Hence the induction is completed. Using this and~\eqref{eq:algorithm2-call-bounds}, we obtain
\begin{equation}\label{eq:algorithm2-stage-errors}
\norm{u^i-u_*^i}\le8^{-(i-1)}\norm{u^1-u^*},\qquad\norm{u^{i+1}-u_*^i}\le8^{-i}\norm{u^1-u^*}\qquad\forall1\le i\le k.
\end{equation}
In addition, by~\eqref{eq:algorithm2-call-bounds} and the first bound in~\eqref{eq:algorithm2-stage-errors}, one has
\begin{equation}\label{eq:algorithm2-stage-residual}
\norm{r^{i+1}}\le(\mu_x+\lambda_i)8^{-i}\norm{u^1-u^*}\qquad\forall1\le i\le k.
\end{equation}

We next bound the accumulated regularization term. By~\eqref{eq:algorithm2-center}, one has, for $2\le i\le k$,
\[
\lambda_i(\bar u^i-u_*^i)-\lambda_{i-1}(\bar u^{i-1}-u_*^{i-1})=\lambda_i(u^i-u_*^i)-\lambda_{i-1}(u^i-u_*^{i-1}).
\]
Summing this equality over $i=2,\ldots,k$ and using $\bar u^1=u^1$, we obtain
\begin{equation}\label{eq:algorithm2-bias}
\begin{aligned}[b]
&\norm{\lambda_k(\bar u^k-u_*^k)}=\left\|\lambda_1(u^1-u_*^1)+\sum_{i=2}^k\left(\lambda_i(u^i-u_*^i)-\lambda_{i-1}(u^i-u_*^{i-1})\right)\right\|\\
&\le\lambda_1\norm{u^1-u_*^1}+\sum_{i=2}^k\left(\lambda_i\norm{u^i-u_*^i}+\lambda_{i-1}\norm{u^i-u_*^{i-1}}\right)\\
&\le\left(\lambda_1+\sum_{i=2}^k(\lambda_i+\lambda_{i-1})8^{-(i-1)}\right)\norm{u^1-u^*}=\left(1+\frac54\sum_{i=2}^k2^{-(i-1)}\right)\lambda_1\norm{u^1-u^*}\le\frac94\lambda_1\norm{u^1-u^*},
\end{aligned}
\end{equation}
where the first inequality follows from the triangle inequality, the second inequality follows from~\eqref{eq:algorithm2-first-distance} and the two bounds in~\eqref{eq:algorithm2-stage-errors} at $i$ and $i-1$, respectively, the second equality follows from $\lambda_i=4^{i-1}\lambda_1$, and the last inequality follows from $\sum_{i=2}^k2^{-(i-1)}\le1$. The sums are empty when $k=1$. Finally, by step~4 of Algorithm~\ref{alg:scsc}, one has
\begin{align*}
&\norm{e^{k+1}}=\norm{r^{k+1}-\lambda_k(u^{k+1}-u_*^k)+\lambda_k(\bar u^k-u_*^k)}\\
&\le\norm{r^{k+1}}+\lambda_k\norm{u^{k+1}-u_*^k}+\norm{\lambda_k(\bar u^k-u_*^k)}\le\left(\frac94\lambda_1+(\mu_x+2\lambda_k)8^{-k}\right)\norm{u^1-u^*},
\end{align*}
where the first inequality follows from the triangle inequality and the second follows from~\eqref{eq:algorithm2-stage-errors},~\eqref{eq:algorithm2-stage-residual}, and~\eqref{eq:algorithm2-bias}. Hence the conclusion of this lemma holds.
\end{proof}

\begin{proof}[Proof of Theorem~\ref{thm:scsc}]
Let $u^*$ be the unique saddle point of~\eqref{eq:scsc}, and let $F$ and $B$ be defined in~\eqref{eq:scsc-operators}. By Lemma~\ref{lem:algorithm2-well-defined}, Algorithm~\ref{alg:scsc} is well-defined and $e^{k+1}\in(F+B)(u^{k+1})$ at every reached iteration, including the preliminary output at $k=0$. By the stopping conditions in steps~1 and~5, its output also satisfies $\norm{e^{k+1}}\le\epsilon$ whenever the algorithm terminates.

We next show that the algorithm terminates. Suppose that it does not terminate in step~1. For any reached regularized iteration $k$ with $\lambda_k\ge\mu_x$, using~\eqref{eq:algorithm2-initial-distance},~\eqref{eq:stage-residual}, and~\eqref{eq:algorithm2-regularization}, we obtain
\begin{align*}
&\norm{e^{k+1}}\le\left(\frac94\lambda_1+3\lambda_k8^{-k}\right)\norm{u^1-u^*}=\left(\frac94+\frac38\,2^{-(k-1)}\right)\lambda_1\norm{u^1-u^*}\\
&\le\frac{21}{8}\lambda_1\norm{u^1-u^*}\le\frac{3}{8}\lambda_1\norm{u^1-u^0}\le\frac38\epsilon,
\end{align*}
where the first inequality also uses $\lambda_k\ge\mu_x$, the equality follows from $\lambda_k=4^{k-1}\lambda_1$, the second inequality follows from $k\ge1$, the third follows from~\eqref{eq:algorithm2-initial-distance}, and the last follows from the definition of $\lambda_1$ in step~1. Thus the stopping condition in step~5 holds whenever $\lambda_k\ge\mu_x$. In addition, every call terminates by Lemma~\ref{lem:algorithm2-well-defined}, and~\eqref{eq:algorithm2-regularization} implies $\lambda_k\ge\mu_x$ for all sufficiently large $k$ if the algorithm does not terminate. Hence Algorithm~\ref{alg:scsc} terminates no later than the first iteration with $\lambda_k\ge\mu_x$.

We next prove the evaluation bound. For a call of Algorithm~\ref{alg:contract} with regularization parameter $\lambda\ge0$, the supplied smoothness and curvature parameters are $(L_{\nabla f}+\lambda,\mu_x+\lambda,\mu_y+\lambda)$. Each evaluation of the gradient of the regularized smooth part requires one evaluation of $\nabla f$. By this and~\eqref{eq:algorithm1-cost}, the numbers of evaluations of $\nabla f$, $\prox_p$, and $\prox_q$ for this call are bounded, respectively, by
\begin{equation}\label{eq:algorithm2-call-cost}
129(N_{\mcA}+256\mathcal C_{\mcA}+3)\frac{L_{\nabla f}+\lambda}{\sqrt{(\mu_x+\lambda)(\mu_y+\lambda)}},
\end{equation}
where we used $\lceil a\rceil\le a+1$ and $L_{\nabla f}\ge\mu_x\ge\mu_y>0$. The preliminary call corresponds to $\lambda=0$, so~\eqref{eq:scsc-cost} holds if the algorithm terminates in step~1. Otherwise, let $k\ge1$ be its terminating regularized iteration.

By~\eqref{eq:algorithm2-regularization}, there are at most $\lceil(\log_4(\mu_y/\lambda_1))_+\rceil$ iterations with $\lambda_i<\mu_y$. Since $\mu_y\le L_{\nabla f}$, one has, at each such iteration,
\[
\frac{L_{\nabla f}+\lambda_i}{\sqrt{(\mu_x+\lambda_i)(\mu_y+\lambda_i)}}\le\frac{2L_{\nabla f}}{\sqrt{\mu_x\mu_y}}.
\]
For the iterations with $\mu_y\le\lambda_i<\mu_x$, we have
\begin{equation}\label{eq:algorithm2-small-lambda-cost}
\frac{L_{\nabla f}+\lambda_i}{\sqrt{(\mu_x+\lambda_i)(\mu_y+\lambda_i)}}\le\frac{2L_{\nabla f}}{\sqrt{\mu_x\lambda_i}},
\end{equation}
where the inequality follows from $\lambda_i<\mu_x\le L_{\nabla f}$, $\mu_x+\lambda_i\ge\mu_x$, and $\mu_y+\lambda_i\ge\lambda_i$. By~\eqref{eq:algorithm2-regularization}, the right-hand side of~\eqref{eq:algorithm2-small-lambda-cost} decreases by a factor of two at each successive iteration. Its sum over these iterations is at most $4L_{\nabla f}/\sqrt{\mu_x\mu_y}$, where we used $\lambda_i\ge\mu_y$ at the first such iteration. In addition, the termination argument implies that at most one reached iteration satisfies $\lambda_i\ge\mu_x$. At this iteration, one has
\[
\frac{L_{\nabla f}+\lambda_i}{\sqrt{(\mu_x+\lambda_i)(\mu_y+\lambda_i)}}\le\frac{L_{\nabla f}}{\lambda_i}+1\le\frac{L_{\nabla f}}{\mu_x}+1\le\frac{2L_{\nabla f}}{\sqrt{\mu_x\mu_y}},
\]
where the first inequality follows from $\sqrt{(\mu_x+\lambda_i)(\mu_y+\lambda_i)}\ge\lambda_i$, the second follows from $\lambda_i\ge\mu_x$, and the last follows from $L_{\nabla f}\ge\mu_x\ge\mu_y$. Adding the bounds over the three ranges, we obtain
\begin{equation}\label{eq:geometric-cost}
\sum_{i=1}^k\frac{L_{\nabla f}+\lambda_i}{\sqrt{(\mu_x+\lambda_i)(\mu_y+\lambda_i)}}\le\left(2\left\lceil\left(\log_4\frac{\mu_y}{\lambda_1}\right)_+\right\rceil+6\right)\frac{L_{\nabla f}}{\sqrt{\mu_x\mu_y}}\le\left(8+2\left(\log_4\frac{\mu_y}{\lambda_1}\right)_+\right)\frac{L_{\nabla f}}{\sqrt{\mu_x\mu_y}},
\end{equation}
where the second inequality follows from $\lceil t\rceil\le t+1$. In addition, by step~1 of Algorithm~\ref{alg:scsc} and~\eqref{eq:algorithm2-initial-distance}, one has
\[
\frac{\mu_y}{\lambda_1}=\frac{\mu_y\norm{u^1-u^0}}\epsilon\le\frac{9\mu_y\norm{u^0-u^*}}{8\epsilon}.
\]
Using this,~\eqref{eq:algorithm2-call-cost}, and~\eqref{eq:geometric-cost}, and including the preliminary call, we obtain that the numbers of evaluations of $\nabla f$, $\prox_p$, and $\prox_q$ are bounded, respectively, by
\begin{align*}
&129(N_{\mcA}+256\mathcal C_{\mcA}+3)\left(9+2\log_4\!\left(1+\frac{9\mu_y\norm{u^0-u^*}}{8\epsilon}\right)\right)\frac{L_{\nabla f}}{\sqrt{\mu_x\mu_y}}\\
&\le2^{11}(N_{\mcA}+256\mathcal C_{\mcA}+3)\frac{L_{\nabla f}}{\sqrt{\mu_x\mu_y}}\left(1+\log_4\!\left(1+\frac{\mu_y\norm{u^0-u^*}}{\epsilon}\right)\right),
\end{align*}
where the first bound follows from $(\log_4 a)_+\le\log_4(1+a)$ for $a>0$, and the inequality follows from $1+9a/8\le9(1+a)/8$ for $a\ge0$ and $129(9+2\log_4(9/8))<2^{11}$. Hence~\eqref{eq:scsc-cost} holds and the proof is completed.
\end{proof}

\subsection{Proof of the main result in Subsection~\ref{sec:pf-scsc}}\label{subsec:proof-algorithm3}

In this subsection we provide a proof of Theorem~\ref{thm:pf-scsc}. We first establish the residual inclusion and evaluation bound for each trial of Algorithm~\ref{alg:pf-scsc}.

\begin{lemma}\label{lem:pf-trial}
Suppose Assumptions~\ref{ass:scsc} and~\ref{ass:subsolver} hold. Let $F$ and $B$ be defined in~\eqref{eq:scsc-operators}. For any reached trial $t$ of Algorithm~\ref{alg:pf-scsc}, let $(u^t,r^t)$ be the pair generated by the algorithm and let $U_t$, $\alpha_t$, and $\nu_t$ be defined in step~5. Then any output $(\tilde u^{t+1},e^{t+1})$ of the call in step~6 satisfies $e^{t+1}\in(F+B)(\tilde u^{t+1})$. Moreover, if
\begin{equation}\label{eq:pf-valid}
U_t\ge L_{\nabla f},\qquad 0<\nu_t\le\mu_y,\qquad \nu_t\le\alpha_t\le\mu_x,
\end{equation}
then the trial completes within its evaluation limits and returns a pair satisfying
\begin{equation}\label{eq:pf-valid-reduction}
\norm{e^{t+1}}\le\max\{\epsilon,\norm{r^t}/8\}.
\end{equation}
\end{lemma}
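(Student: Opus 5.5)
The inclusion $e^{t+1}\in(F+B)(\tilde u^{t+1})$ does not depend on whether the supplied parameters are valid. I will trace it through the algebra of Algorithms~\ref{alg:contract} and~\ref{alg:scsc}. In Algorithm~\ref{alg:contract}, step~2 guarantees $r^{k+1}\in(F_k+B)(u^{k+1})$ for whatever $F_k$ is built from the supplied parameters. Then \eqref{eq:algorithm1-updates} gives $e^{k+1}=F(u^{k+1})+(r^{k+1}-F_k(u^{k+1}))\in(F+B)(u^{k+1})$ by pure algebra; this is the first part of the proof of Lemma~\ref{lem:certificate}, which uses no curvature assumption. Applying this to the regularized operator $F+\lambda_k(\cdot-\bar u^k)$ inside Algorithm~\ref{alg:scsc} gives $r^{k+1}\in(F+B)(u^{k+1})+\lambda_k(u^{k+1}-\bar u^k)$. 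The correction in step~4 then yields $e^{k+1}\in(F+B)(u^{k+1})$, exactly as in the proof of Lemma~\ref{lem:algorithm2-well-defined}. The returned pair is always of one of these two forms, so the first claim holds.

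Now assume \eqref{eq:pf-valid}. The key observation is that $f$ is then $U_t$-smooth, $\alpha_t$-strongly convex in $x$ and $\nu_t$-strongly concave in $y$, with $0<\nu_t\le\alpha_t\le U_t$. Hence Assumption~\ref{ass:scsc} holds with the supplied triple $(U_t,\alpha_t,\nu_t)$, and the saddle point $u^*$ is unchanged. Theorem~\ref{thm:scsc} therefore applies to the call in step~6, started at $u^t$ with tolerance $\epsilon_t:=\max\{\epsilon,\norm{r^t}/8\}$. If it runs to completion, its output satisfies $\norm{e^{t+1}}\le\epsilon_t$, which is \eqref{eq:pf-valid-reduction}. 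Its evaluation counts are at most
\[
2^{11}(N_{\mcA}+256\mathcal C_{\mcA}+3)\frac{U_t}{\sqrt{\alpha_t\nu_t}}\left(1+\log_4\!\left(1+\frac{\nu_t\norm{u^t-u^*}}{\epsilon_t}\right)\right).
\]

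It remains to show that this is at most the trial limit $\lceil\widetilde{\mcC}_{\mcA}U_t/\sqrt{\alpha_t\nu_t}\rceil$. Since $\widetilde{\mcC}_{\mcA}=4\cdot2^{11}(N_{\mcA}+256\mathcal C_{\mcA}+3)$, it suffices to show
\[
1+\log_4\!\left(1+\frac{\nu_t\norm{u^t-u^*}}{\epsilon_t}\right)\le4,
\qquad\text{i.e.}\qquad \nu_t\norm{u^t-u^*}\le63\,\epsilon_t .
\]
I will obtain this from strong monotonicity. We have $r^t\in(F+B)(u^t)$: for $t=1$ this is checked directly from step~2, since $r^1=\rho(u^0-u^1)+F(u^1)-F(u^0)$ and the resolvent step gives $\rho(u^0-u^1)-F(u^0)\in B(u^1)$; later iterates inherit it from the first part. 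Because $F+B$ is $\mu_y$-strongly monotone and $0\in(F+B)(u^*)$, we get $\mu_y\norm{u^t-u^*}\le\norm{r^t}$. Combined with $\nu_t\le\mu_y$, this gives $\nu_t\norm{u^t-u^*}\le\norm{r^t}\le8\epsilon_t$. The logarithm is then at most $\log_4 9<2$, so the total factor is below $3\le4$ and the bound fits within the limit.

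The main obstacle is bookkeeping. The trial count must include every inner call, initialization and residual computation, and Theorem~\ref{thm:scsc} only bounds these for the supplied parameters. I will therefore state explicitly that the evaluation bound in Theorem~\ref{thm:scsc} counts all evaluations of $\nabla f$ and the proximal maps. Since the trial completes, the interruption rule never fires, and the call returns the pair described above.
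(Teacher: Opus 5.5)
Your proposal is correct and follows essentially the same route as the paper: the inclusion comes from the residual corrections in \eqref{eq:algorithm1-updates} and step~4 of Algorithm~\ref{alg:scsc}. Under \eqref{eq:pf-valid}, strong monotonicity gives $\nu_t\norm{u^t-u^*}\le\norm{r^t}\le 8\max\{\epsilon,\norm{r^t}/8\}$, so the bound from Theorem~\ref{thm:scsc} has logarithmic factor at most $1+\log_4 9$ and fits inside the trial limit. The only minor differences are that you prove $r^t\in(F+B)(u^t)$ inside the lemma by induction from step~2, whereas the paper takes it from the proof of Theorem~\ref{thm:pf-scsc}, and that you use $\mu_y$-strong monotonicity with $\nu_t\le\mu_y$ instead of $\nu_t$-strong monotonicity directly.
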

\begin{proof}
Fix any reached trial $t$. By the stopping tests and updates in Algorithm~\ref{alg:pf-scsc}, one has $\norm{r^t}>\epsilon>0$. The execution requirements on $\mcA$ and the residual corrections in~\eqref{eq:algorithm1-updates} and step~4 of Algorithm~\ref{alg:scsc} imply $e^{t+1}\in(F+B)(\tilde u^{t+1})$ whenever the call returns.

Suppose that~\eqref{eq:pf-valid} holds, and let $u^*$ be the saddle point of~\eqref{eq:scsc}. Since $r^t\in(F+B)(u^t)$ and $0\in(F+B)(u^*)$, the $\nu_t$-strong monotonicity of $F+B$ implies $\nu_t\norm{u^t-u^*}\le\norm{r^t}$. By Theorem~\ref{thm:scsc} with parameters $(U_t,\alpha_t,\nu_t)$, initial point $u^t$, and tolerance $\max\{\epsilon,\norm{r^t}/8\}$, the call in step~6 without evaluation limits terminates with a pair satisfying~\eqref{eq:pf-valid-reduction}. In addition, by~\eqref{eq:scsc-cost}, its numbers of evaluations of $\nabla f$, $\prox_p$, and $\prox_q$ are bounded, respectively, by
\begin{align*}
&2^{11}(N_{\mcA}+256\mathcal C_{\mcA}+3)\frac{U_t}{\sqrt{\alpha_t\nu_t}}\left(1+\log_4\!\left(1+\frac{\nu_t\norm{u^t-u^*}}{\max\{\epsilon,\norm{r^t}/8\}}\right)\right)\\
&\le2^{11}(N_{\mcA}+256\mathcal C_{\mcA}+3)\frac{U_t}{\sqrt{\alpha_t\nu_t}}(1+\log_4 9)
<\widetilde{\mcC}_{\mcA}\frac{U_t}{\sqrt{\alpha_t\nu_t}},
\end{align*}
where the first inequality follows from $\nu_t\norm{u^t-u^*}\le\norm{r^t}$ and $\max\{\epsilon,\norm{r^t}/8\}\ge\norm{r^t}/8$, and the last inequality follows from $\log_4 9<2$ and the definition of $\widetilde{\mcC}_{\mcA}$ in step~6 of Algorithm~\ref{alg:pf-scsc}. Thus the call completes within its evaluation limits. Hence the conclusion follows.
\end{proof}

We now prove Theorem~\ref{thm:pf-scsc}.

\begin{proof}[Proof of Theorem~\ref{thm:pf-scsc}]
Let $u^*$ be the saddle point of~\eqref{eq:scsc}. By step~1 of Algorithm~\ref{alg:pf-scsc}, one has
\begin{align*}
&\mu_x\norm d^2
\le\ip{\nabla_x f(u_x^0+d,u_y^0)-\nabla_x f(u_x^0,u_y^0)}d\le\norm{\nabla_x f(u_x^0+d,u_y^0)-\nabla_x f(u_x^0,u_y^0)}\norm d
=\rho\norm d^2,
\end{align*}
where the first inequality follows from the $\mu_x$-strong convexity of $f(\cdot,u_y^0)$, the second follows from the Cauchy--Schwarz inequality, and the equality follows from the definition of $\rho$. Since $d\ne0$ and both points belong to $\mathcal X\times\mathcal Y$, we obtain $\mu_x\le\rho$. Using this, Assumption~\ref{ass:scsc}, and the Lipschitz continuity of $\nabla f$, we have $0<\mu_y\le\mu_x\le\rho\le L_{\nabla f}$.

By step~2 and the resolvent identity, one has
\[
\rho(u^0-u^1)-F(u^0)\in B(u^1),\qquad
r^1=\rho(u^0-u^1)+F(u^1)-F(u^0)\in(F+B)(u^1).
\]
Thus $u^1\in\mathcal X\times\mathcal Y$, and the initialization is well-defined. In addition, one has
\begin{align*}
\rho\norm{u^0-u^1}^2
&\le\ip{\rho(u^0-u^*)+F(u^0)-F(u^*)}{u^0-u^1}\\
&\le(\rho+L_{\nabla f})\norm{u^0-u^*}\norm{u^0-u^1},
\end{align*}
where the first inequality follows from $\rho(u^0-u^1)-F(u^0)\in B(u^1)$, $-F(u^*)\in B(u^*)$, and the monotonicity of $F$ and $B$, and the second follows from the Cauchy--Schwarz inequality and the Lipschitz continuity of $F$. Dividing by $\rho\norm{u^0-u^1}$ when it is positive, with the zero case immediate, we obtain $\norm{u^0-u^1}\le(1+L_{\nabla f}/\rho)\norm{u^0-u^*}$. Using this and the definition of $r^1$ in step~2, we obtain
\begin{equation}\label{eq:pf-initial-residual}
\norm{r^1}\le(\rho+L_{\nabla f})\norm{u^0-u^1}\le\frac{(\rho+L_{\nabla f})^2}{\rho}\norm{u^0-u^*}\le\frac{4L_{\nabla f}^2}{\mu_x}\norm{u^0-u^*},
\end{equation}
where the first inequality follows from the triangle inequality and the Lipschitz continuity of $F$, the second follows from $\norm{u^0-u^1}\le(1+L_{\nabla f}/\rho)\norm{u^0-u^*}$, and the last follows from $\mu_x\le\rho\le L_{\nabla f}$.

The computations in steps~1 and~2 use at most three evaluations of $\nabla f$ and one of each proximal mapping. If $\norm{r^1}\le\epsilon$, step~3 returns $u^1$, which is an $\epsilon$-stationary point by~Definition~\ref{def:epsilon-stationary}, and the conclusion follows. Suppose now that $\norm{r^1}>\epsilon$. The parameters and evaluation limits in every reached trial are positive and finite. By the execution requirements on $\mcA$, each call can therefore be executed up to its limit. Since $r^1\in(F+B)(u^1)$, Lemma~\ref{lem:pf-trial} and the updates in steps~9 and~11 imply $r^t\in(F+B)(u^t)$ at every reached trial $t$. Thus Algorithm~\ref{alg:pf-scsc} is well-defined.

We next prove finite termination. By steps~9 and~11, an unsuccessful trial advances to the next triple in $\mathcal I$, and a successful trial keeps the triple unchanged. Let
\[
i_*=\left\lceil\log_2\frac{L_{\nabla f}}{\rho}\right\rceil,\qquad
j_*=\left\lceil\log_2\frac{\rho}{\mu_x}\right\rceil,\qquad
k_*=\left\lceil\log_2\frac{\rho}{\mu_y}\right\rceil.
\]
By $\mu_y\le\mu_x\le\rho\le L_{\nabla f}$ and the definition of the ceiling function, one has $i_*\ge0$, $0\le j_*\le k_*$, and
\[
\rho 2^{i_*}\ge L_{\nabla f},\qquad
\rho 2^{-j_*}\le\mu_x,\qquad
\rho 2^{-k_*}\le\mu_y.
\]
It follows that $(i_*,j_*,k_*)\in\mathcal I$ and the corresponding parameters satisfy~\eqref{eq:pf-valid}. By Lemma~\ref{lem:pf-trial}, every trial with this triple either terminates in step~7 or passes the acceptance test in step~8. Hence no later triple is reached. Let $q_*=2i_*+j_*+k_*$. Using the definitions of $i_*$, $j_*$, and $k_*$ and $\lceil a\rceil\le a+1$, we obtain
\begin{align}
&q_*\le2\log_2\frac{L_{\nabla f}}{\rho}
+\log_2\frac{\rho}{\mu_x}+\log_2\frac{\rho}{\mu_y}+4=2\log_2\!\left(\frac{L_{\nabla f}}{\sqrt{\mu_x\mu_y}}\right)+4\quad\Rightarrow\quad 2^{q_*/2}\le\frac{4L_{\nabla f}}{\sqrt{\mu_x\mu_y}}.\label{eq:pf-level}
\end{align}
Suppose that $s$ successful trials precede a reached trial $t$. By the stopping and acceptance tests in steps~7 and~8 and the updates in steps~9 and~11, one has $\epsilon<\norm{r^t}\le2^{-s}\norm{r^1}$ and hence $s<\log_2(\norm{r^1}/\epsilon)$. Thus there are at most $\lceil\log_2(\norm{r^1}/\epsilon)\rceil$ successful trials, including the terminating trial. In addition, each unsuccessful trial advances to a new triple, and there are finitely many triples up to $(i_*,j_*,k_*)$ in $\mathcal I$. By these and the fact that no later triple is reached, Algorithm~\ref{alg:pf-scsc} terminates in finitely many trials. By Lemma~\ref{lem:pf-trial}, the stopping test in step~7, and~Definition~\ref{def:epsilon-stationary}, the returned point is an $\epsilon$-stationary point of~\eqref{eq:scsc}.

It remains to establish the evaluation bound. For a fixed level $q$, the indices satisfy $2i+j+k=q$ and $0\le j\le k$. For every $0\le i\le\lfloor q/2\rfloor$, there are $\lfloor q/2\rfloor-i+1$ choices of $j$, and each determines $k=q-2i-j$. Thus the number of triples at level $q$ satisfies
\begin{equation}\label{eq:pf-level-triples}
\sum_{i=0}^{\lfloor q/2\rfloor}(\lfloor q/2\rfloor-i+1)
=\frac{(\lfloor q/2\rfloor+1)(\lfloor q/2\rfloor+2)}2\le(q+2)^2.
\end{equation}
By step~5, every trial $t$ at level $q$ has $2i_t+j_t+k_t=q$ and $U_t/\sqrt{\alpha_t\nu_t}=2^{q/2}$. Using this, $q\ge0$, and the evaluation limit in step~6, we obtain that the numbers of evaluations of $\nabla f$, $\prox_p$, and $\prox_q$ in that trial are bounded, respectively, by $(\widetilde{\mcC}_{\mcA}+1)2^{q/2}$. Since there is at most one unsuccessful trial at each triple,~\eqref{eq:pf-level-triples} bounds the number of unsuccessful trials at level $q$. In addition, each successful trial has level at most $q_*$, and there are at most $\lceil\log_2(\norm{r^1}/\epsilon)\rceil$ such trials. Combining these bounds and the initialization cost, we obtain that the total numbers of evaluations of $\nabla f$, $\prox_p$, and $\prox_q$ are bounded, respectively, by
\begin{equation*}
\begin{aligned}[b]
&3+(\widetilde{\mcC}_{\mcA}+1)\left(\sum_{q=0}^{q_*}(q+2)^2 2^{q/2}+\left\lceil\log_2\frac{\norm{r^1}}\epsilon\right\rceil2^{q_*/2}\right)\\
&\le3+(\widetilde{\mcC}_{\mcA}+1)2^{q_*/2}\left((2+\sqrt2)(q_*+2)^2+\left\lceil\log_2\frac{\norm{r^1}}\epsilon\right\rceil\right)\\
&\le3+4(\widetilde{\mcC}_{\mcA}+1)\frac{L_{\nabla f}}{\sqrt{\mu_x\mu_y}}\left((2+\sqrt2)\left(2\log_2\frac{L_{\nabla f}}{\sqrt{\mu_x\mu_y}}+6\right)^2+\left\lceil\log_2\!\left(1+\frac{4L_{\nabla f}^2\norm{u^0-u^*}}{\mu_x\epsilon}\right)\right\rceil\right),
\end{aligned}
\end{equation*}
where the term $3$ accounts for the evaluations in steps~1 and~2, the first inequality follows from $(q+2)^2\le(q_*+2)^2$ for $q\le q_*$ and $\sum_{q=0}^{q_*}2^{-(q_*-q)/2}\le2+\sqrt2$, and the last inequality follows from~\eqref{eq:pf-initial-residual},~\eqref{eq:pf-level}, and the monotonicity of the logarithm and ceiling functions. Hence~\eqref{eq:pf-complexity} holds, and the conclusion follows.

\end{proof}
\subsection{Proof of the main result in Subsections~\ref{sec:ncc} and~\ref{sec:pf-ncc}}\label{subsec:proof-algorithm4}.

In this subsection we first establish four lemmas and then prove Theorems~\ref{thm:ncc} and \ref{thm:pf-ncc}. Accepted trials are defined before Algorithm~\ref{alg:ncc}. In particular, an accepted trial updates the primal center, inner initial point, and stored residual in step~7 without terminating or changing the smoothness estimate. For $\mu>0$, define
\begin{equation}\label{eq:ncc-regularized-objective}
H_\mu(x,y)=H(x,y)-\frac\mu2\norm{y-y^0}^2.
\end{equation}
By Assumption~\ref{ass:ncc}(i)--(iii), the maxima of $H(x,\cdot)$ and $H_\mu(x,\cdot)$ are finite and attained for every $x\in\mathcal X$.

\begin{lemma}\label{lem:ncc-trial}
Suppose that Assumption~\ref{ass:ncc} holds. For the sequence generated by Algorithm~\ref{alg:ncc}, let $\tau_t$ be defined in step~4. Then the algorithm is well-defined at every reached trial $t$, and its call in step~4 returns or is interrupted after finitely many evaluations. If the call returns, the returned point is a $\tau_t$-stationary point of the subproblem in step~4.
\end{lemma}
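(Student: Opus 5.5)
The plan is to prove the three claims in order: well-definedness, finiteness of each call, and $\tau_t$-stationarity of a returned pair. For the last claim we will use only \emph{algebraic} properties of Algorithms~\ref{alg:contract} and~\ref{alg:scsc}, never their convergence theory. This matters because at a generic trial $\beta_t$ may be smaller than the weak-convexity modulus of $h$, so the subproblem~\eqref{eq:h-k} need not be SCSC with the supplied parameters $(3\beta_t,\beta_t,\mu)$. Throughout, let $F_t(x,y)=\bigl(\nabla_x h(x,y)+2\beta_t(x-x^t),\,-\nabla_y h(x,y)+\mu(y-y^0)\bigr)$ denote the operator of the subproblem. Each evaluation of $F_t$ costs one evaluation of $\nabla h$ plus affine arithmetic, so evaluation counts of $F_t$ and of $\nabla h$ agree.

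\emph{Well-definedness.} We argue by induction on $t$ that the following hold at every reached trial: $(\hat x,\hat y)\in\mathcal X\times\mathcal Y$, $x^t\in\mathcal X$, $\beta_t>0$, and $r^t$ is a finite vector. At $t=1$, the pair $(\hat x,\hat y)$ is a value of the resolvent $(I+\beta_1^{-1}B)^{-1}$, so it lies in $\dom B=\mathcal X\times\mathcal Y$. Moreover $\beta_1=\epsilon/32>0$ and $\mu>0$. The quantity $\tau_t$ and the budget~\eqref{eq:ncc-trial-budget} are therefore positive and finite, so all parameters passed to Algorithm~\ref{alg:scsc} are positive. In step~7, an accepted trial replaces $(\hat x,\hat y)$ by a returned pair, which lies in $\mathcal X\times\mathcal Y$ because every evaluation of $F$ under Assumption~\ref{ass:subsolver} is at a point of $\dom B$. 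A rejected trial keeps the pair and sets $\beta_{t+1}=2\beta_t>0$. The remaining updates are explicit, which closes the induction.

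\emph{Finiteness.} By the additional execution requirements of Subsection~\ref{sec:pf-scsc}, $\mcA$ is well-defined for every positive parameter choice, even an invalid one, and can be interrupted before any evaluation. Hence the nested calls of Algorithms~\ref{alg:scsc}, \ref{alg:contract} and $\mcA$ can be executed step by step. All other steps of these algorithms are explicit arithmetic. Every iteration of their loops either evaluates an oracle or terminates; for instance, each iteration of Algorithm~\ref{alg:contract} performs at least the initialization resolvent $\widehat c^k$. Consequently the call either returns or is stopped once the finite budget~\eqref{eq:ncc-trial-budget} would be exceeded, after finitely many evaluations. The step I expect to need the most care is making this "no evaluation-free infinite loop'' point rigorous without invoking Lemma~\ref{lem:balanced-cost}, since that lemma presupposes valid parameters.

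\emph{Stationarity.} We reuse the first parts of the proofs of Lemma~\ref{lem:certificate} and Lemma~\ref{lem:algorithm2-well-defined}, which are purely algebraic. Whenever $\mcA$ stops, it returns $r\in(F_k+B)(u)$. Then the correction~\eqref{eq:algorithm1-updates} yields $e\in(F_t+B)(u)$, because $F_k-F_t$ is exactly the added affine term. The correction in step~4 of Algorithm~\ref{alg:scsc} likewise converts the residual of the regularized problem into a residual for $F_t+B$. Neither argument uses strong monotonicity. Algorithm~\ref{alg:scsc} returns only when this residual $e$ satisfies $\norm e\le\tau_t$. Its point lies in $\mathcal X\times\mathcal Y$, and $e\in(F_t+B)(u)$. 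Splitting $e$ into its $x$- and $y$-blocks gives the two distance bounds of Definition~\ref{def:epsilon-stationary} for $h_t$, each at most $\tau_t$. Hence the returned pair is a $\tau_t$-stationary point of the subproblem in step~4.
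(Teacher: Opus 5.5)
Your proof is correct and follows essentially the paper's route. The paper also uses induction over trials for feasibility and positivity of the inputs, the execution requirements on $\mcA$ for finiteness, and the purely algebraic residual corrections in \eqref{eq:algorithm1-updates} and step~4 of Algorithm~\ref{alg:scsc} for $\tau_t$-stationarity, with no convergence theory.

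There are two small differences. First, the paper's induction also carries the inclusion \eqref{eq:ncc-stored-residual} for $r^t$ at $(\hat x,\hat y)$. That inclusion is not needed for this statement but is reused in Lemma~\ref{lem:ncc-run-cost}, whereas you only track finiteness of $r^t$. Second, the cleaner reason the returned pair lies in $\mathcal X\times\mathcal Y$ is that $r\in(F_k+B)(u)$ forces $B(u)\neq\varnothing$, rather than where $\mcA$ evaluates $F$.
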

\begin{proof}
We prove by induction that, at the beginning of every reached trial $t$, one has $x^t\in\mathcal X$, $(\hat x,\hat y)\in\mathcal X\times\mathcal Y$, and
\begin{equation}\label{eq:ncc-stored-residual}
r^t\in\begin{pmatrix}\nabla_x h_t(\hat x,\hat y)\\-\nabla_y h_t(\hat x,\hat y)\end{pmatrix}+B(\hat x,\hat y),
\end{equation}
where $(\hat x,\hat y)$ denotes the inner initial point at that trial. By step~2 and the resolvent identity,
\[
\beta_1\bigl((x^0,y^0)-(\hat x,\hat y)\bigr)-A(x^0,y^0)\in B(\hat x,\hat y).
\]
Together with $x^1=x^0$,~\eqref{eq:h-k}, and the definition of $r^1$ in step~2, this proves the induction hypothesis at $t=1$.

Suppose that the hypothesis holds at a reached trial $t$. By the parameter updates, $0<\mu\le\epsilon/32\le\beta_t$ and $\tau_t>0$. Thus the bound in~\eqref{eq:ncc-trial-budget} is finite. The execution requirements on $\mcA$ ensure that the call in step~4 is well-defined and returns or is interrupted within this bound. If it returns, its residual corrections and stopping tests imply that $\norm{e^{t+1}}\le\tau_t$ and, before any reset in step~7,
\begin{equation}\label{eq:ncc-subproblem-kkt}
e^{t+1}\in\begin{pmatrix}\nabla_x h_t(x^{t+1},y^{t+1})\\-\nabla_y h_t(x^{t+1},y^{t+1})\end{pmatrix}+B(x^{t+1},y^{t+1}).
\end{equation}
Hence $(x^{t+1},y^{t+1})\in\mathcal X\times\mathcal Y$ is a $\tau_t$-stationary point of the subproblem in step~4.

If the trial is accepted, the center changes to $x^{t+1}$ and the inner initial point changes to $(x^{t+1},y^{t+1})$. Subtracting $\bigl(2\beta_t(x^{t+1}-x^t),0\bigr)$ from~\eqref{eq:ncc-subproblem-kkt} proves~\eqref{eq:ncc-stored-residual} at trial $t+1$. If step~7 rejects the trial, the center and inner initial point are retained, while $\beta_{t+1}=2\beta_t$. Adding $\bigl(2\beta_t(\hat x-x^t),0\bigr)$ to~\eqref{eq:ncc-stored-residual} proves the same inclusion at trial $t+1$. The feasibility statements are also preserved. This completes the induction and proves the conclusion.
\end{proof}

The next two lemmas bound the decrease at an accepted trial and its sum within a run, respectively.

\begin{lemma}\label{lem:ncc-descent}
Suppose Assumptions~\ref{ass:ncc} and~\ref{ass:subsolver} hold. For the sequence generated by Algorithm~\ref{alg:ncc}, let $\mu$ and $\tau_t$ be defined in steps~1 and~4, respectively, and let $H_\mu$ be defined in~\eqref{eq:ncc-regularized-objective}. Then, at every accepted trial $t$, one has
\begin{equation}\label{eq:nonterminal-descent}
\max_y H_\mu(x^t,y)-\max_y H_\mu(x^{t+1},y)\ge\beta_t\norm{x^{t+1}-x^t}^2>\frac{\epsilon^2}{16\beta_t}.
\end{equation}
\end{lemma}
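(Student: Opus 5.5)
The argument uses three facts. The model test~\eqref{eq:ncc-model-test} holds at an accepted trial. The returned pair is a $\tau_t$-stationary point of the subproblem (Lemma~\ref{lem:ncc-trial}). The stopping test in step~6 failed, so $\norm{x^{t+1}-x^t}>\epsilon/(4\beta_t)$. The strict inequality on the right of~\eqref{eq:nonterminal-descent} is immediate from the last fact: $\beta_t\norm{x^{t+1}-x^t}^2>\beta_t\epsilon^2/(16\beta_t^2)=\epsilon^2/(16\beta_t)$. The work is in the first inequality.

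For the first inequality, write $\phi(x)=\max_y H_\mu(x,y)$ and $y^+=y^{t+1}$. Lower-bound $\phi(x^t)\ge H_\mu(x^t,y^+)$. For the upper bound on $\phi(x^{t+1})$, let $y^\sharp\in\argmax_y H_\mu(x^{t+1},y)$, so $\phi(x^{t+1})=H_\mu(x^{t+1},y^\sharp)$. The $y$-residual satisfies $e_y^{t+1}\in\nabla_y h(x^{t+1},y^+)-\mu(y^+-y^0)-\partial q(y^+)$ up to sign conventions. The function $H_\mu(x^{t+1},\cdot)$ is $\mu$-strongly concave, so
\[
H_\mu(x^{t+1},y^\sharp)\le H_\mu(x^{t+1},y^+)+\ip{e_y^{t+1}}{y^\sharp-y^+}-\tfrac\mu2\norm{y^\sharp-y^+}^2\le H_\mu(x^{t+1},y^+)+\frac{\norm{e_y^{t+1}}^2}{2\mu}.
\]
Hence $\phi(x^t)-\phi(x^{t+1})\ge H(x^t,y^+)-H(x^{t+1},y^+)-\tau_t^2/(2\mu)$; the $-\frac\mu2\norm{y^+-y^0}^2$ terms cancel. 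By the choice $\tau_t=(\epsilon/64)\sqrt{\mu/\beta_t}$, the error is $\tau_t^2/(2\mu)=\epsilon^2/(2\cdot 64^2\beta_t)$. This is tiny compared with $\epsilon^2/(16\beta_t)$, so it can be absorbed into the slack.

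Next, bound the primal difference $h(x^t,y^+)+p(x^t)-h(x^{t+1},y^+)-p(x^{t+1})$. The $x$-residual gives $g:=e_x^{t+1}-\nabla_x h(x^{t+1},y^+)-2\beta_t(x^{t+1}-x^t)\in\partial p(x^{t+1})$. By convexity of $p$,
\[
p(x^t)-p(x^{t+1})\ge\ip{g}{x^t-x^{t+1}}=\ip{\nabla_x h(x^{t+1},y^+)}{x^{t+1}-x^t}+2\beta_t\norm{x^{t+1}-x^t}^2-\ip{e_x^{t+1}}{x^{t+1}-x^t}.
\]
Adding $h(x^t,y^+)-h(x^{t+1},y^+)$ and applying~\eqref{eq:ncc-model-test} yields at least $\tfrac32\beta_t\norm{x^{t+1}-x^t}^2-\tau_t\norm{x^{t+1}-x^t}$. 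This is the step I expect to need care: the model test must be applied with the right sign so that only $\beta_t/2$ of the quadratic is lost.

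It remains to absorb the error terms. We have $\tau_t\le\epsilon/64$ (since $\mu\le\beta_t$) and $\epsilon<4\beta_t\norm{x^{t+1}-x^t}$. Therefore $\tau_t\norm{x^{t+1}-x^t}\le\tfrac1{16}\beta_t\norm{x^{t+1}-x^t}^2$. Likewise $\tau_t^2/(2\mu)\le\epsilon^2/(2\cdot64^2\beta_t)\le\tfrac{16}{8192}\beta_t\norm{x^{t+1}-x^t}^2$. The leftover is therefore at least $(\tfrac32-\tfrac1{16}-\tfrac1{512})\beta_t\norm{x^{t+1}-x^t}^2\ge\beta_t\norm{x^{t+1}-x^t}^2$, which gives~\eqref{eq:nonterminal-descent}.
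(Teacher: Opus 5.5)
Your proposal is correct and follows the paper's proof essentially step for step. You bound $\max_y H_\mu(x^{t+1},\cdot)$ through the $\mu$-strong concavity and the dual residual, losing only $\norm{e_y^{t+1}}^2/(2\mu)$; you use the subgradient of $p$ from the primal residual together with~\eqref{eq:ncc-model-test} to reach $\tfrac32\beta_t\norm{x^{t+1}-x^t}^2-\tau_t\norm{x^{t+1}-x^t}$; and you absorb the errors with the same constants $\tfrac1{16}$ and $\tfrac1{512}$. The only blemish is the sign of the dual inclusion: in fact $e_y^{t+1}\in\partial(-H_\mu(x^{t+1},\cdot))(y^{t+1})$, so the linear term should read $-\ip{e_y^{t+1}}{y^\sharp-y^+}$, which is immaterial because only $\norm{e_y^{t+1}}^2/(2\mu)$ survives.
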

\begin{proof}
Fix any accepted trial $t$. By Lemma~\ref{lem:ncc-trial} and step~6 of Algorithm~\ref{alg:ncc}, one has $\norm{e^{t+1}}\le\tau_t$ and $\norm{x^{t+1}-x^t}>\epsilon/(4\beta_t)$. By~\eqref{eq:ncc-subproblem-kkt}, $e_y^{t+1}\in\partial(-H_\mu(x^{t+1},\cdot))(y^{t+1})$. Using this and the $\mu$-strong convexity of $-H_\mu(x^{t+1},\cdot)$, we obtain that for every $y\in\mathcal Y$,
\[
\begin{aligned}
H_\mu(x^{t+1},y)
&\le H_\mu(x^{t+1},y^{t+1})-\ip{e_y^{t+1}}{y-y^{t+1}}-\frac\mu2\norm{y-y^{t+1}}^2\\
&\le H_\mu(x^{t+1},y^{t+1})+\frac{\norm{e_y^{t+1}}^2}{2\mu}.
\end{aligned}
\]
Taking the maximum over $y$ and using the convexity of $p$ and~\eqref{eq:ncc-subproblem-kkt}, we obtain
\[
\begin{aligned}
&\max_y H_\mu(x^t,y)-\max_y H_\mu(x^{t+1},y)\\
&\ge h(x^t,y^{t+1})-h(x^{t+1},y^{t+1})+p(x^t)-p(x^{t+1})-\frac{\norm{e_y^{t+1}}^2}{2\mu}\\
&\ge h(x^t,y^{t+1})-h(x^{t+1},y^{t+1})+\ip{\nabla_x h(x^{t+1},y^{t+1})-e_x^{t+1}}{x^{t+1}-x^t}
+2\beta_t\norm{x^{t+1}-x^t}^2-\frac{\norm{e_y^{t+1}}^2}{2\mu}\\
&\ge\frac{3\beta_t}{2}\norm{x^{t+1}-x^t}^2-\tau_t\norm{x^{t+1}-x^t}-\frac{\tau_t^2}{2\mu}\\
&>\left(\frac32-\frac1{16}-\frac1{512}\right)\beta_t\norm{x^{t+1}-x^t}^2
\ge\beta_t\norm{x^{t+1}-x^t}^2>\frac{\epsilon^2}{16\beta_t},
\end{aligned}
\]
where the third inequality follows from~\eqref{eq:ncc-model-test} and $\norm{e^{t+1}}\le\tau_t$, and the fourth follows from $\tau_t\le\epsilon/64$, $\tau_t^2/(2\mu)=\epsilon^2/(8192\beta_t)$, and $\norm{x^{t+1}-x^t}>\epsilon/(4\beta_t)$. Hence,~\eqref{eq:nonterminal-descent} holds.
\end{proof}

\begin{lemma}\label{lem:ncc-phase}
Suppose Assumptions~\ref{ass:ncc} and~\ref{ass:subsolver} hold. For the sequence generated by Algorithm~\ref{alg:ncc}, let $\mathcal T$ be the set of accepted trials up to any reached trial. Then
\begin{equation}\label{eq:ncc-accepted-energy}
\sum_{t\in\mathcal T}\beta_t\norm{x^{t+1}-x^t}^2\le\Delta_0+\frac{\epsilon D_{\mathbf y}}8,
\end{equation}
where $\Delta_0$ is defined in Theorem~\ref{thm:ncc} and $D_{\mathbf y}$ is given in Assumption~\ref{ass:ncc}(ii). Moreover, any consecutive trials with fixed $\beta_t$ contain at most $1+16\beta_t(\Delta_0+\epsilon D_{\mathbf y}/8)/\epsilon^2$ calls to Algorithm~\ref{alg:scsc}.
\end{lemma}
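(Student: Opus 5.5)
The plan is a telescoping argument for the primal value function $\Psi_\mu(x):=\max_y H_\mu(x,y)$, driven by Lemma~\ref{lem:ncc-descent}. The key observation is that the primal center moves only at accepted trials. At a rejected trial step~7 sets $x^{t+1}=x^t$, and at a terminating or interrupted trial no further center is formed. Hence, if we list the accepted trials in $\mathcal T$ in increasing order, the center $x^{t+1}$ produced by one accepted trial equals the center $x^{t'}$ at the next accepted trial $t'$.

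\textbf{Step 1 (telescoping).} Summing~\eqref{eq:nonterminal-descent} over $t\in\mathcal T$ gives
\[
\sum_{t\in\mathcal T}\beta_t\norm{x^{t+1}-x^t}^2\le\Psi_\mu(x^1)-\Psi_\mu(x^{T}),
\]
where $x^1=x^0$ and $x^T$ is the center after the last accepted trial.

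\textbf{Step 2 (upper bound on the starting value).} Since $-\frac\mu2\norm{y-y^0}^2\le0$, we have $\Psi_\mu(x^0)\le\max_yH(x^0,y)$.

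\textbf{Step 3 (lower bound on the final value).} For any $x$ and $y\in\mathcal Y$, Assumption~\ref{ass:ncc}(ii) gives $\norm{y-y^0}\le2D_{\mathbf y}$. Hence
\[
\Psi_\mu(x)\ge\max_yH(x,y)-2\mu D_{\mathbf y}^2\ge H^*-2\mu D_{\mathbf y}^2,
\]
where the last inequality uses that $H^*$ is the min--max value. This is the one place where the radius enters.

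\textbf{Step 4 (bounding the correction term).} With $\mu=\epsilon/(32D)$ and $D=2D_{\mathbf y}$, we get $2\mu D_{\mathbf y}^2=\epsilon D_{\mathbf y}/32\le\epsilon D_{\mathbf y}/8$, and~\eqref{eq:ncc-accepted-energy} follows. If the lemma is meant for a general supplied $D\ge1$, the correction becomes $\epsilon D_{\mathbf y}^2/(16D)$. Since $D\ge1$ only gives $\le\epsilon D_{\mathbf y}^2/16$, this is not of the required form. I would therefore argue instead via $\norm{y-y^0}\le D$ for the relevant iterates, using the radius test, or simply state the lemma for $D=2D_{\mathbf y}$ as in Theorem~\ref{thm:ncc}. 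I expect this bookkeeping to be the main obstacle. My first attempt would be to use $D=2D_{\mathbf y}$, where the bound holds with room to spare.

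\textbf{Step 5 (counting calls at fixed $\beta$).} Within a maximal run of consecutive trials with the same $\beta_t=\beta$, every trial except possibly the last is accepted. A rejection doubles $\beta$, and termination or interruption ends the run. Each accepted trial contributes more than $\epsilon^2/(16\beta)$ to the left side of~\eqref{eq:ncc-accepted-energy}, by the second inequality in~\eqref{eq:nonterminal-descent}. The number of accepted trials in the run is therefore less than $16\beta(\Delta_0+\epsilon D_{\mathbf y}/8)/\epsilon^2$. Adding one for the final non-accepted trial gives the stated count of calls to Algorithm~\ref{alg:scsc}.
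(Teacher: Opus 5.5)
Your Steps~1, 2 and~5 match the paper's telescoping and counting argument, and the inequality in Step~3 is correct. The difficulty you flag in Step~4, however, is a genuine gap, and neither fallback closes it. The lemma concerns Algorithm~\ref{alg:ncc} with an arbitrary supplied radius $D\ge1$, and it is needed in that generality. The proof of Theorem~\ref{thm:ncc} applies \eqref{eq:ncc-accepted-energy} to runs with any $1\le D\le4D_{\mathbf y}$. Theorem~\ref{thm:pf-ncc} then sums the resulting bound over the phases $D_k=2^{k-1}$, which start at $D_1=1$, possibly far below $D_{\mathbf y}$. Your correction $2\mu D_{\mathbf y}^2=\epsilon D_{\mathbf y}^2/(16D)$ is at most $\epsilon D_{\mathbf y}/8$ only when $D\ge D_{\mathbf y}/2$, so specializing to $D=2D_{\mathbf y}$ proves a strictly weaker lemma.

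A bound that is uniform in $x$, as in Step~3, also cannot be improved. Take $\mathcal Y=[-D_{\mathbf y},D_{\mathbf y}]$, $y^0=-D_{\mathbf y}$, $p\equiv0$ and $h(x,y)=2\mu D_{\mathbf y}y$. Then $\max_yH_\mu(x,y)=H^*-2\mu D_{\mathbf y}^2$ for every $x$. So when $D<D_{\mathbf y}/2$, the inequality $\Psi_\mu(x)\ge H^*-\epsilon D_{\mathbf y}/8$ that you need at the final center fails at every point. The lemma holds in that case only because the radius test returns failure before any trial is accepted. For the same reason, you cannot simply insert the radius test into Step~3: the maximizer of $H(x,\cdot)$ used there is not an iterate, and the test says nothing about it.

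The missing idea is an iterate-specific comparison at the last accepted trial $a=\max\mathcal T$ (the case $\mathcal T=\varnothing$ is trivial). It uses what that trial certifies about $y^{a+1}$.
\begin{itemize}
\item By \eqref{eq:ncc-subproblem-kkt}, $g:=e_y^{a+1}-\mu(y^{a+1}-y^0)\in\partial q(y^{a+1})-\nabla_yh(x^{a+1},y^{a+1})$.
\item Since trial $a$ passed the radius test, $\norm{y^{a+1}-y^0}\le D$. Hence $\norm g\le\tau_a+\mu D\le3\epsilon/64$, because $\mu D=\epsilon/32$ whatever $D$ is.
\item Concavity of $H(x^{a+1},\cdot)$ and $\norm{y-y^{a+1}}\le2D_{\mathbf y}$ give $\max_yH(x^{a+1},y)\le H(x^{a+1},y^{a+1})+3\epsilon D_{\mathbf y}/32$.
\item Using $\norm{y^{a+1}-y^0}\le\min\{D,2D_{\mathbf y}\}$, one also gets $H(x^{a+1},y^{a+1})\le\Psi_\mu(x^{a+1})+\frac\mu2\norm{y^{a+1}-y^0}^2\le\Psi_\mu(x^{a+1})+\epsilon D_{\mathbf y}/32$.
\end{itemize}
Together these give $\Psi_\mu(x^{a+1})\ge\max_yH(x^{a+1},y)-\epsilon D_{\mathbf y}/8\ge H^*-\epsilon D_{\mathbf y}/8$ for every $D\ge1$, which is exactly what your telescoping requires. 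The point is that $\mu$ now multiplies $D\,D_{\mathbf y}$ rather than $D_{\mathbf y}^2$; this is the paper's argument. Your Step~3 remains a valid and simpler alternative when $D\ge D_{\mathbf y}/2$, where it does not even need the inner KKT inclusion.
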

\begin{proof}
Fix any accepted trial $t$. By~\eqref{eq:ncc-subproblem-kkt}, one has
\[
e_y^{t+1}-\mu(y^{t+1}-y^0)\in\partial q(y^{t+1})-\nabla_y h(x^{t+1},y^{t+1}).
\]
By Lemma~\ref{lem:ncc-trial} and steps~1,~4, and~5 of Algorithm~\ref{alg:ncc}, $\norm{e^{t+1}}\le\tau_t$, $\mu\le\beta_t$, and $\norm{y^{t+1}-y^0}\le D$. Thus
\[
\norm{e_y^{t+1}-\mu(y^{t+1}-y^0)}\le\tau_t+\mu D
=\frac\epsilon{64}\sqrt{\frac\mu{\beta_t}}+\frac\epsilon{32}\le\frac{3\epsilon}{64}.
\]
By the convexity of $q-h(x^{t+1},\cdot)$, one has
\[
H(x^{t+1},y)\le H(x^{t+1},y^{t+1})+\ip{\mu(y^{t+1}-y^0)-e_y^{t+1}}{y-y^{t+1}},\qquad\forall y\in\mathcal Y.
\]
Using these inequalities and $\norm{y-y^{t+1}}\le2D_{\mathbf y}$ from Assumption~\ref{ass:ncc}(ii), we obtain
\begin{equation}\label{eq:ncc-dual-comparison}
\begin{aligned}[b]
&\max_y H(x^{t+1},y)\le H(x^{t+1},y^{t+1})+\frac{3\epsilon D_{\mathbf y}}{32}\\
&\le\max_y H_\mu(x^{t+1},y)+\frac\mu2\norm{y^{t+1}-y^0}^2+\frac{3\epsilon D_{\mathbf y}}{32}\le\max_y H_\mu(x^{t+1},y)+\frac{\epsilon D_{\mathbf y}}8,
\end{aligned}
\end{equation}
where the second inequality follows from~\eqref{eq:ncc-regularized-objective}, and the last follows from $\mu\norm{y^{t+1}-y^0}^2/2\le\mu D D_{\mathbf y}=\epsilon D_{\mathbf y}/32$, by the radius test and Assumption~\ref{ass:ncc}(ii).

If $\mathcal T=\varnothing$,~\eqref{eq:ncc-accepted-energy} follows from $\Delta_0\ge0$. Suppose now that $\mathcal T\ne\varnothing$, and let $a=\max\mathcal T$. Since $\mu$ is fixed, $x^1=x^0$, and step~7 implies $x^{k+1}=x^k$ at every rejected trial, summing~\eqref{eq:nonterminal-descent} over $t\in\mathcal T$ yields
\[
\begin{aligned}
&\sum_{t\in\mathcal T}\beta_t\norm{x^{t+1}-x^t}^2
\le\max_y H_\mu(x^0,y)-\max_y H_\mu(x^{a+1},y)\\
&\le\max_y H(x^0,y)-\max_y H(x^{a+1},y)+\frac{\epsilon D_{\mathbf y}}8\le\max_y H(x^0,y)-H^*+\frac{\epsilon D_{\mathbf y}}8
=\Delta_0+\frac{\epsilon D_{\mathbf y}}8,
\end{aligned}
\]
where the second inequality follows from $H_\mu\le H$ and~\eqref{eq:ncc-dual-comparison}, and the last inequality and equality follow from the definitions of $H^*$ and $\Delta_0$, respectively. Hence,~\eqref{eq:ncc-accepted-energy} holds.

By Lemma~\ref{lem:ncc-descent} and~\eqref{eq:ncc-accepted-energy}, the number of accepted trials with fixed $\beta_t$ is at most $16\beta_t(\Delta_0+\epsilon D_{\mathbf y}/8)/\epsilon^2$. Any other trial either terminates the run or doubles $\beta_t$. Thus consecutive trials with fixed $\beta_t$ contain at most one additional trial, which proves the remaining conclusion.
\end{proof}

\begin{lemma}\label{lem:ncc-run-cost}
Suppose Assumptions~\ref{ass:ncc} and~\ref{ass:subsolver} hold, and $0<\epsilon<32L_{\nabla h}$. Let Algorithm~\ref{alg:ncc} be run with $1\le D\le4D_{\mathbf y}$. Then the algorithm terminates with an $\epsilon$-stationary point of~\eqref{eq:ncc} or with failure, and failure occurs only if $D<2D_{\mathbf y}$.
\end{lemma}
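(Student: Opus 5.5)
Three things need proving: every call in step~4 returns within its budget, the doubling of $\beta_t$ stops, and the output of step~6 is $\epsilon$-stationary. The failure claim is quick. If $D\ge2D_{\mathbf y}$, Assumption~\ref{ass:ncc}(ii) gives $\norm{y^{t+1}-y^0}\le2D_{\mathbf y}\le D$ for every returned pair. So the radius test in step~5 never fires, and failure forces $D<2D_{\mathbf y}$. The rest of the argument shows that the run cannot continue forever without returning a point or failing.

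\emph{Step 1: trials with $\beta_t\ge L_{\nabla h}$.} When $\beta_t\ge L_{\nabla h}$, the smooth part $h_t$ in~\eqref{eq:h-k} is $(2\beta_t-L_{\nabla h})\ge\beta_t$-strongly convex in $x$, by weak convexity of $h(\cdot,y)$. It is $\mu$-strongly concave in $y$ and $(L_{\nabla h}+2\beta_t)\le3\beta_t$-smooth. Hence Assumption~\ref{ass:scsc} holds with the supplied parameters $(3\beta_t,\beta_t,\mu)$, using $\mu\le\beta_t$. Theorem~\ref{thm:scsc} then bounds the cost by
\[
2^{11}(N_{\mcA}+256\mathcal C_{\mcA}+3)\cdot3\sqrt{\beta_t/\mu}\,\bigl(1+\log_4(1+\mu\norm{(\hat x,\hat y)-u_*^t}/\tau_t)\bigr).
\]
To fit this into the budget~\eqref{eq:ncc-trial-budget}, I use the stored residual $r^t$ from~\eqref{eq:ncc-stored-residual}. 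Strong monotonicity with the weighted norm $\norm{\cdot}_W=\sqrt{\beta_t\norm{\cdot_x}^2+\mu\norm{\cdot_y}^2}$ gives $\norm{(\hat x,\hat y)-u_*^t}_W\le\sqrt{\norm{r_x^t}^2/\beta_t+\norm{r_y^t}^2/\mu}$. It follows that $\mu\norm{(\hat x,\hat y)-u_*^t}\le\sqrt{\mu}\,\norm{\cdot}_W\le\sqrt{\mu/\beta_t}\sqrt{\norm{r_x^t}^2+(\beta_t/\mu)\norm{r_y^t}^2}$. Dividing by $\tau_t=(\epsilon/64)\sqrt{\mu/\beta_t}$ gives exactly the argument of the logarithm in~\eqref{eq:ncc-trial-budget}. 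The factor $3\cdot2^{11}<2^{13}$ then fits inside $\widetilde{\mcC}_{\mcA}$, so the call returns a $\tau_t$-stationary point (Lemma~\ref{lem:ncc-trial}).

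\emph{Step 2: such trials are never rejected.} Under $\beta_t\ge L_{\nabla h}$, the model test~\eqref{eq:ncc-model-test} holds by the $L_{\nabla h}$-smoothness descent inequality:
\[
h(x^t,y)\ge h(x^{t+1},y)+\ip{\nabla_x h(x^{t+1},y)}{x^t-x^{t+1}}-\tfrac{L_{\nabla h}}2\norm{x^t-x^{t+1}}^2.
\]
So once $\beta_t\ge L_{\nabla h}$, the value of $\beta$ never doubles again. Since $\beta_1=\epsilon/32<L_{\nabla h}$, every $\beta_t$ satisfies $\beta_t<2L_{\nabla h}$. Hence $\beta$ takes finitely many values. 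Lemma~\ref{lem:ncc-phase} bounds the number of trials at each fixed $\beta_t$, and Lemma~\ref{lem:ncc-trial} makes each trial finite, so the run terminates.

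The remaining issue is a trial with $\beta_t<L_{\nabla h}$ whose call is interrupted. I treat an interrupted call as a rejection, so $\beta$ doubles and this case is covered by the same counting. This matches step~7's ``otherwise'' branch, and I will check that the paper's step~7 is read this way. Termination therefore happens in step~5 (failure) or step~6.

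\emph{Step 3: stationarity of the step~6 output.} This is the main obstacle. From~\eqref{eq:ncc-subproblem-kkt}, the $x$-residual of~\eqref{eq:ncc} at $(x^{t+1},y^{t+1})$ is $e_x^{t+1}-2\beta_t(x^{t+1}-x^t)$. Its norm is at most $\tau_t+2\beta_t\cdot\epsilon/(4\beta_t)\le\epsilon/64+\epsilon/2<\epsilon$. The $y$-residual is $e_y^{t+1}-\mu(y^{t+1}-y^0)$. Since step~5 passed, $\norm{y^{t+1}-y^0}\le D$, and its norm is at most $\tau_t+\mu D\le\epsilon/64+\epsilon/32<\epsilon$. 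This uses $\mu=\epsilon/(32D)$, which is why the radius test precedes step~6, and it holds for any $D$. Both residuals lie in the sets of Definition~\ref{def:epsilon-stationary}, which completes the proof.
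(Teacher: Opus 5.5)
Your proposal is correct and follows the paper's proof essentially step for step. The shared steps are these: the weighted strong-monotonicity bound from the stored residual shows the call fits within the budget~\eqref{eq:ncc-trial-budget} once $\beta_t\ge L_{\nabla h}$; the model test~\eqref{eq:ncc-model-test} then holds, which gives $\beta_t<2L_{\nabla h}$ throughout; Lemmas~\ref{lem:ncc-trial} and~\ref{lem:ncc-phase} give finiteness; and the residual at a step~6 output is bounded using $\tau_t$, the step~6 test, and the radius test.

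The remaining differences are inessential. Your reading of an interrupted call as falling into the ``Otherwise'' branch of step~7 is exactly the paper's convention. You also bound the $x$- and $y$-residuals separately, whereas the paper bounds the joint residual by $35\epsilon/64$.
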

\begin{proof}
Fix any reached trial $t$ with $\beta_t\ge L_{\nabla h}$. By Assumption~\ref{ass:ncc}(iii),~\eqref{eq:h-k}, and $\mu\le\beta_t$, the subproblem in step~4 is $\beta_t$-strongly convex in $x$ and $\mu$-strongly concave in $y$, and $\nabla h_t$ is $3\beta_t$-Lipschitz continuous. Let $(x_*^t,y_*^t)$ be its unique saddle point and $(\hat x,\hat y)$ its initial point. By~\eqref{eq:ncc-stored-residual}, strong monotonicity, one has
\[
\beta_t\norm{\hat x-x_*^t}^2+\mu\norm{\hat y-y_*^t}^2
\le\ip{r^t}{(\hat x,\hat y)-(x_*^t,y_*^t)}\le\left(\frac{\norm{r_x^t}^2}{\beta_t}+\frac{\norm{r_y^t}^2}\mu\right)^{1/2}
\left(\beta_t\norm{\hat x-x_*^t}^2+\mu\norm{\hat y-y_*^t}^2\right)^{1/2}.
\]
Using this, $\mu\le\beta_t$, and $\tau_t=(\epsilon/64)\sqrt{\mu/\beta_t}$, we obtain
\[
\frac\mu{\tau_t}\norm{(\hat x,\hat y)-(x_*^t,y_*^t)}
\le\frac{64}\epsilon\sqrt{\norm{r_x^t}^2+\frac{\beta_t}\mu\norm{r_y^t}^2}.
\]
By Theorem~\ref{thm:scsc} with $(L_{\nabla f},\mu_x,\mu_y)=(3\beta_t,\beta_t,\mu)$ and $3\cdot2^{11}(N_{\mcA}+256\mathcal C_{\mcA}+3)\le\widetilde{\mcC}_{\mcA}$, the call finishes within the limit~\eqref{eq:ncc-trial-budget}. In addition, the $L_{\nabla h}$-weak convexity of $h(\cdot,y^{t+1})$ implies
\[
\begin{aligned}
&h(x^t,y^{t+1})-h(x^{t+1},y^{t+1})+\ip{\nabla_x h(x^{t+1},y^{t+1})}{x^{t+1}-x^t}
+\frac{\beta_t}{2}\norm{x^{t+1}-x^t}^2\ge\frac{\beta_t-L_{\nabla h}}2\norm{x^{t+1}-x^t}^2\ge0.
\end{aligned}
\]
Thus~\eqref{eq:ncc-model-test} holds, so the trial either terminates the run or is accepted. Since $\beta_1=\epsilon/32<L_{\nabla h}$, step~7 can double $\beta_t$ only when $\beta_t<L_{\nabla h}$. Together with $\mu=\epsilon/(32D)$ and $1\le D\le4D_{\mathbf y}$, this implies
\begin{equation}\label{eq:ncc-search-bound}
\frac\epsilon{128D_{\mathbf y}}\le\mu\le\beta_1=\frac\epsilon{32}\le\beta_t<2L_{\nabla h}.
\end{equation}
By~\eqref{eq:ncc-search-bound}, only finitely many rejected trials occur. Lemmas~\ref{lem:ncc-trial} and~\ref{lem:ncc-phase} ensure that each call is finite and that only finitely many consecutive trials retain the same $\beta_t$. Hence the run terminates.

If the run returns a point in step~6, then~\eqref{eq:h-k} and~\eqref{eq:ncc-subproblem-kkt} imply
\[
e^{t+1}-\bigl(2\beta_t(x^{t+1}-x^t),\mu(y^{t+1}-y^0)\bigr)\in(A+B)(x^{t+1},y^{t+1}).
\]
By Lemma~\ref{lem:ncc-trial} and steps~5 and~6, the norm of this residual is at most
\[
\norm{e^{t+1}-\bigl(2\beta_t(x^{t+1}-x^t),\mu(y^{t+1}-y^0)\bigr)}\le\tau_t+2\beta_t\norm{x^{t+1}-x^t}+\mu\norm{y^{t+1}-y^0}
\le\frac\epsilon{64}+\frac\epsilon2+\frac\epsilon{32}=\frac{35\epsilon}{64}<\epsilon.
\]
Thus the returned point is an $\epsilon$-stationary point of~\eqref{eq:ncc} by Definition~\ref{def:epsilon-stationary}. If the run returns failure in step~5, Assumption~\ref{ass:ncc}(ii) and Lemma~\ref{lem:ncc-trial} imply $D<\norm{y^{t+1}-y^0}\le2D_{\mathbf y}$.

\end{proof}

\begin{proof}[Proof of Theorem~\ref{thm:ncc}]
We first bound the numbers of evaluations in a run of Algorithm~\ref{alg:ncc} with any $1\le D\le4D_{\mathbf y}$. By Lemma~\ref{lem:ncc-run-cost}, the run terminates.

The tests in steps~5--7 require at most two evaluations of $h$ and one additional evaluation of $\nabla h$. By~\eqref{eq:ncc-trial-budget}, $\beta_s\ge\mu$, and $\widetilde{\mcC}_{\mcA}\ge3$, the numbers of evaluations of $h$, $\nabla h$, $\prox_p$, and $\prox_q$ at every trial $s$, including its tests, are bounded, respectively, by
\begin{equation}\label{eq:ncc-trial-evaluations}
2\widetilde{\mcC}_{\mcA}\sqrt{\frac{\beta_s}\mu}
\left(1+\log_4\!\left(1+\frac{64}\epsilon\sqrt{\norm{r_x^s}^2+\frac{\beta_s}\mu\norm{r_y^s}^2}\right)\right).
\end{equation}
Fix a maximal group of consecutive trials with the same $\beta_t$, and let $t$ be its first index. We first bound the logarithm in~\eqref{eq:ncc-trial-evaluations} at trial $t$.

Suppose no trial before $t$ has been accepted. Then $x^t=x^0$ and $(\hat x,\hat y)$ is the point computed in step~2. Its optimality condition and the convexity of $p$ imply
\[
p(\hat x)-p(x^0)\le-\ip{\nabla_x h(x^0,y^0)}{\hat x-x^0}-\beta_1\norm{\hat x-x^0}^2.
\]
Using this, $0<\beta_1/L_{\nabla h}<1$, the convexity of $p$, and the Lipschitz continuity of $\nabla h$, we obtain that for every $y\in\mathcal Y$,
\[
\begin{aligned}
&H\!\left(x^0+\frac{\beta_1}{L_{\nabla h}}(\hat x-x^0),y\right)\le H(x^0,y)+\frac{\beta_1}{L_{\nabla h}}
\left(\ip{\nabla_x h(x^0,y)}{\hat x-x^0}+p(\hat x)-p(x^0)\right)
+\frac{\beta_1^2}{2L_{\nabla h}}\norm{\hat x-x^0}^2\\
&\le H(x^0,y)+\frac{\beta_1}{L_{\nabla h}}
\ip{\nabla_x h(x^0,y)-\nabla_x h(x^0,y^0)}{\hat x-x^0}
-\frac{\beta_1^2}{2L_{\nabla h}}\norm{\hat x-x^0}^2\\
&\le H(x^0,y)+2\beta_1D_{\mathbf y}\norm{\hat x-x^0}-\frac{\beta_1^2}{2L_{\nabla h}}\norm{\hat x-x^0}^2,
\end{aligned}
\]
where the last inequality follows from $\norm{y-y^0}\le2D_{\mathbf y}$ by Assumption~\ref{ass:ncc}(ii). Taking the maximum over $y$ and using the definitions of $H^*$ and $\Delta_0$, we obtain
\[
\frac{\beta_1^2}{2L_{\nabla h}}\norm{\hat x-x^0}^2-2\beta_1D_{\mathbf y}\norm{\hat x-x^0}\le\Delta_0.
\]
Solving this quadratic inequality and using $\norm{\hat y-y^0}\le2D_{\mathbf y}$ yields
\[
\begin{aligned}
\norm{(\hat x,\hat y)-(x^0,y^0)}
&\le2D_{\mathbf y}+\frac{2L_{\nabla h}D_{\mathbf y}+\sqrt{4L_{\nabla h}^2D_{\mathbf y}^2+2L_{\nabla h}\Delta_0}}{\beta_1}\le2D_{\mathbf y}+\frac{4L_{\nabla h}D_{\mathbf y}+\sqrt{2L_{\nabla h}\Delta_0}}{\beta_1}.
\end{aligned}
\]
Moreover, steps~2 and~7 imply
\[
r^t=A(\hat x,\hat y)-A(x^0,y^0)
+\bigl((2\beta_t-\beta_1)(\hat x-x^0),(\mu-\beta_1)(\hat y-y^0)\bigr).
\]
By~\eqref{eq:ncc-search-bound}, $\norm{r^t}\le(L_{\nabla h}+2\beta_t)\norm{(\hat x,\hat y)-(x^0,y^0)}\le5L_{\nabla h}\norm{(\hat x,\hat y)-(x^0,y^0)}$. Combining these inequalities, we obtain
\begin{equation}\label{eq:ncc-initial-residual-bound}
\begin{aligned}[b]
&\frac{64}\epsilon\sqrt{\norm{r_x^t}^2+\frac{\beta_t}\mu\norm{r_y^t}^2}\le\frac{320L_{\nabla h}}\epsilon\sqrt{\frac{\beta_t}\mu}
\left(2D_{\mathbf y}+\frac{4L_{\nabla h}D_{\mathbf y}+\sqrt{2L_{\nabla h}\Delta_0}}{\beta_1}\right).
\end{aligned}
\end{equation}

Suppose now that a trial before $t$ has been accepted, and let $i<t$ be the last such trial. Then $(\hat x,\hat y)=(x^{i+1},y^{i+1})$ and $x^t=x^{i+1}$. The residual correction in every intervening rejected trial is zero, so step~7 implies $r^t=e^{i+1}-\bigl(2\beta_i(x^{i+1}-x^i),0\bigr)$. Using this, $\norm{e^{i+1}}\le(\epsilon/64)\sqrt{\mu/\beta_i}$,~\eqref{eq:ncc-accepted-energy}, and~\eqref{eq:ncc-search-bound}, we obtain
\[
\begin{aligned}
\frac{64}\epsilon\sqrt{\norm{r_x^t}^2+\frac{\beta_t}\mu\norm{r_y^t}^2}
&\le\sqrt{\frac{\beta_t}{\beta_i}}+\frac{128\beta_i}\epsilon\norm{x^{i+1}-x^i}\le\sqrt{\frac{2L_{\nabla h}}{\beta_1}}
+\frac{128}\epsilon\sqrt{2L_{\nabla h}\left(\Delta_0+\frac{\epsilon D_{\mathbf y}}8\right)}.
\end{aligned}
\]
Using this,~\eqref{eq:ncc-search-bound},~\eqref{eq:ncc-initial-residual-bound}, and $D_{\mathbf y}\ge1$, we obtain in both cases
\begin{equation}\label{eq:ncc-first-log-bound}
\begin{aligned}[b]
&1+\log_4\!\left(1+\frac{64}\epsilon\sqrt{\norm{r_x^t}^2+\frac{\beta_t}\mu\norm{r_y^t}^2}\right)\\
&\le1+\log_4\!\left(2^{20}\left(1+\frac{L_{\nabla h}\Delta_0}{\epsilon^2}+\frac{L_{\nabla h}D_{\mathbf y}}\epsilon\right)^{5/2}\right)\\
&=11+\frac52\log_4\!\left(1+\frac{L_{\nabla h}\Delta_0}{\epsilon^2}+\frac{L_{\nabla h}D_{\mathbf y}}\epsilon\right)
\le11+\frac{2L_{\nabla h}\Delta_0}{\epsilon^2}+\frac{2L_{\nabla h}D_{\mathbf y}}\epsilon,
\end{aligned}
\end{equation}
where the last inequality follows from $\log(1+v)\le v$ for $v\ge0$. Combining~\eqref{eq:ncc-trial-evaluations} and~\eqref{eq:ncc-first-log-bound}, we obtain that the numbers of evaluations at trial $t$ are bounded, respectively, by
\begin{equation}\label{eq:ncc-first-trial-evaluations}
2\widetilde{\mcC}_{\mcA}\sqrt{\frac{\beta_t}\mu}
\left(11+\frac{2L_{\nabla h}\Delta_0}{\epsilon^2}+\frac{2L_{\nabla h}D_{\mathbf y}}\epsilon\right).
\end{equation}

Every later trial $s$ in this group follows an accepted trial $s-1$ with the same parameters. By step~7 and $\norm{e^s}\le\tau_t$, one has
\[
\frac{64}\epsilon\sqrt{\norm{r_x^s}^2+\frac{\beta_t}\mu\norm{r_y^s}^2}
\le1+\frac{128\beta_t}\epsilon\norm{x^s-x^{s-1}}.
\]
By~\eqref{eq:nonterminal-descent}, $\beta_t\norm{x^s-x^{s-1}}/\epsilon>1/4$. Using this,~\eqref{eq:ncc-trial-evaluations}, and $1+\log_4(2+128v)\le64v^2$ for $v\ge1/4$, we obtain that the numbers of evaluations at trial $s$ are bounded, respectively, by
\begin{equation}\label{eq:ncc-later-trial-evaluations}
128\widetilde{\mcC}_{\mcA}\sqrt{\frac{\beta_t}\mu}\,
\frac{\beta_t^2\norm{x^s-x^{s-1}}^2}{\epsilon^2}.
\end{equation}
Each such trial follows a distinct accepted trial. By~\eqref{eq:ncc-accepted-energy},~\eqref{eq:ncc-search-bound},~\eqref{eq:ncc-first-trial-evaluations}, and~\eqref{eq:ncc-later-trial-evaluations}, summing the trial bounds yields the following respective bounds for the numbers of evaluations of $h$, $\nabla h$, $\prox_p$, and $\prox_q$ in this group:
\begin{equation}\label{eq:ncc-group-evaluations}
\begin{aligned}[b]
&\widetilde{\mcC}_{\mcA}\sqrt{\frac{\beta_t}\mu}
\left(22+\frac{260L_{\nabla h}\Delta_0}{\epsilon^2}+\frac{36L_{\nabla h}D_{\mathbf y}}\epsilon\right)\le2^9\widetilde{\mcC}_{\mcA}\sqrt{\frac{\beta_t}\mu}
\left(1+\frac{L_{\nabla h}\Delta_0}{\epsilon^2}+\frac{L_{\nabla h}D_{\mathbf y}}\epsilon\right).
\end{aligned}
\end{equation}

Let $s_1<\cdots<s_J$ be the first indices of these groups. By step~7, $\beta_{s_{j+1}}=2\beta_{s_j}$ for every $1\le j<J$. Using this,~\eqref{eq:ncc-search-bound}, and $\mu=\epsilon/(32D)$, we obtain
\[
\begin{aligned}
\sum_{j=1}^J\sqrt{\frac{\beta_{s_j}}\mu}
&=\sqrt{\frac{\beta_{s_J}}\mu}\sum_{j=1}^J2^{-(J-j)/2}
\le(2+\sqrt2)\sqrt{\frac{2L_{\nabla h}}\mu}
=8(2+\sqrt2)\sqrt{\frac{L_{\nabla h}D}\epsilon}.
\end{aligned}
\]
Summing~\eqref{eq:ncc-group-evaluations} and adding the two gradient evaluations and one evaluation of each proximal mapping in step~2, we obtain that the numbers of evaluations of $h$, $\nabla h$, $\prox_p$, and $\prox_q$ in the run are bounded, respectively, by
\begin{equation}\label{eq:ncc-run-cost}
2+2^{12}(2+\sqrt2)\widetilde{\mcC}_{\mcA}\sqrt{\frac{L_{\nabla h}D}\epsilon}
\left(1+\frac{L_{\nabla h}\Delta_0}{\epsilon^2}+\frac{L_{\nabla h}D_{\mathbf y}}\epsilon\right).
\end{equation}
With $D=2D_{\mathbf y}$, Lemma~\ref{lem:ncc-run-cost} excludes failure, so Algorithm~\ref{alg:ncc} returns an $\epsilon$-stationary point of~\eqref{eq:ncc}. Since $D_{\mathbf y}\ge1$ and $\epsilon<32L_{\nabla h}$,~\eqref{eq:ncc-run-cost} implies~\eqref{eq:ncc-cost}. Hence the conclusion follows.
\end{proof}

\begin{proof}[Proof of Theorem~\ref{thm:pf-ncc}]
We first prove by induction that every reached phase $k$ satisfies $D_k\le4D_{\mathbf y}$ and its call to Algorithm~\ref{alg:ncc} terminates. Since $D_1=1\le D_{\mathbf y}$, the claim holds for $k=1$ by Lemma~\ref{lem:ncc-run-cost}. Suppose it holds at a reached phase $k$. If phase $k+1$ is reached, the call at phase $k$ returned failure. By Lemma~\ref{lem:ncc-run-cost}, $D_k<2D_{\mathbf y}$ and hence $D_{k+1}=2D_k<4D_{\mathbf y}$. Applying the same lemma at phase $k+1$ proves the induction claim.

Since $D_k=2^{k-1}$ and Lemma~\ref{lem:ncc-run-cost} excludes failure when $D_k\ge2D_{\mathbf y}$, only finitely many phases are reached. Let $K$ be the last phase. By Lemma~\ref{lem:ncc-run-cost}, its output is an $\epsilon$-stationary point of~\eqref{eq:ncc}. Moreover, $D_K\le4D_{\mathbf y}$, so
\[
\sum_{k=1}^K\sqrt{D_k}
=\sqrt{D_K}\sum_{k=1}^K2^{-(K-k)/2}
\le(2+\sqrt2)\sqrt{D_K}\le(2+\sqrt2)\sqrt{4D_{\mathbf y}}.
\]
Every phase starts from $(x^0,y^0)$ with the same tolerance, so $\Delta_0$ is unchanged. Since $D_k\ge1$ and $\epsilon<32L_{\nabla h}$, summing~\eqref{eq:ncc-run-cost} with $D=D_k$ and using the last inequality yields~\eqref{eq:pf-ncc-cost}, including the initialization in every phase. Hence the conclusion follows.
\end{proof}

\end{document}